\def\figurename{Figure} 
\renewcommand{\fnum@figure}[1]{\figurename~\thefigure.}
\def\tablename{Table} 
\renewcommand{\fnum@table}[1]{\tablename~\thetable.}
\newtheorem{theorem}{Theorem}[section]
\newtheorem{lemma}[theorem]{Lemma}
\newtheorem{corollary}[theorem]{Corollary}
\newtheorem{proposition}[theorem]{Proposition}
\theoremstyle{definition}
\newtheorem{definition}[theorem]{Definition}
\newtheorem{example}[theorem]{Example}
\theoremstyle{remark}
\newtheorem{remark}[theorem]{Remark}
\numberwithin{equation}{section}
\begin{document}

\title{\bfseries\scshape{Ternary Leibniz color algebras and beyond}}

\author{\bfseries\scshape Ibrahima BAKAYOKO\thanks{e-mail address: ibrahimabakayoko27@gmail.com}\\
D\'epartement de Math\'ematiques,\\
Universit\'e de N'Z\'er\'ekor\'e\\
BP 50, N'Z\'er\'ekor\'e, Guin\'ee.\\
 \\
}
 
\date{}
\maketitle 


\noindent\hrulefill

\noindent {\bf Abstract.} 
The purpose of this paper is to study some ternary color algebras. We generalize some results on ternary Leibniz algebras to the case of ternary
 Leibniz color algebras. In order to produce examples of ternary Leibniz color algebras from Leibniz color algebras, several results on Leibniz 
color algebras are given. Next, we introduce ternary Leibniz color algebras and give some constructions dealing with operators, direct sum and 
tensor product. After that, we introduce bimodule over ternary Leibniz color algebra and point out that the direct sum and the tensor 
product of two bimodules over a ternary Leibniz color algebra is also a bimodule over the given ternary Leibniz color algebra. Then, we introduce
Leibniz-Poisson color algebra and study some of their properties. Finally, we give a brief description of Lie triple systems, Jordan triple 
systems and Comstrans algebras. The connection among them are also studied.

\noindent \hrulefill

\vspace{.3in}

\noindent {\bf AMS Subject Classification:} 17B75; 16W50.

\vspace{.08in} \noindent \textbf{Keywords}: Ternary Leibniz color algebras, color Jordan triple systems, color Lie triple systems, 
ternary Leibniz-Poisson color algebras, associative color trialgebras. 
\vspace{.3in}
\vspace{.2in}
\section{Introduction}
The notion of n-Lie algebras were introduced by Filippov \cite{F} in 1985  as a natural generalization of Lie algebras. 
More precisely, $n$-Lie algebras are vector spaces $V$ equipped with $n$-ary operation which is skew symmetric 
for any pair of variables and satisfies the following identity :
\begin{eqnarray}
 [[x_1, x_2, \dots, x_n], y_1, y_2, \dots, y_{n-2}, y_{n-1}]=\sum_{i=1}^n[x_1, x_2, \dots, x_{i-1}, [x_i, y_1, y_2, \dots, y_{n-2}, y_{n-1}], 
x_{i+1}, \dots, x_n].\label{F}
\end{eqnarray}
For $n=3$, it reads
\begin{eqnarray}
 [[x, y, z], t, u]=[x, y, [z, t, u]]+[x, [y, t, u], z]+[[x, t, u], y, z].\nonumber
\end{eqnarray}
Whenever the identity (\ref{F}) is satisfied and the bracket fails to be totally skew symmetric we  obtain n-Leibniz algebras  \cite{JM}.
Moreover, when the bracket is skew-symmetric with respect to the last two variables, $(V, [-, -, -])$ is said to be quasi-Lie 3-algebras \cite{JMC}.
If in addition, 
\begin{eqnarray}
[x, y, z]+[y, z, x]+[z, x, y]=0\nonumber
\end{eqnarray}
is satisfied for any $x, y, z\in V$, $(V, [-, -, -])$ is called a Lie triple system \cite{JMC}.

The n-Lie algebras found their applications in many fields of mathematics and Physics. For instance, Takhtajan has developed the foundations of 
the theory of Nambu-Poisson manifolds \cite{TL}.
The general cohomology theory for n-Lie algebras and Leibniz n-algebras was established in \cite{RM}.
The structure and classification theory of finite dimensional n-Lie algebras was given by Ling \cite{LW}  and many
other authors. For more details of the theory and applications of n-Lie algebras, see \cite{DI}
and references therein.
 
Specially, 3-Lie algebras are applied to the study of the gauge symmetry and supersymmetry of multiple coincident M2-branes in \cite{BL}.
The authors in \cite{MA} studied non-commutative ternary Nambu-Poisson algebras and their Hom-type version. They provided construction results 
dealing with tensor product and direct sums of two (non-commutative) ternary (Hom-) Nambu-Poisson algebras. Examples and a 3-dimensional
 classification of non-commutative ternary Nambu-Poisson algebras were given.

The concept of n-Lie algebras are extended to the graded case by  Zhang T. in \cite{TZ}, in which he studied the cohomology and deformations of
 n-Lie colour algebras when $n=3$, as well as the abelian extensions of 3-Lie colour algebras. 
Ivan Kaygorodova and  Yury Popov studied genera-lized derivations of n-ary color algebras \cite{IY}.

It well-known that mathematical objects are often understood through studying operators defined on them. For instance, in Gallois theory a field is studied
by its automorphisms, in analysis functions are studied through their derivations,  and in geometry manifolds are studied through their vector fields., 
Fifty years ago, several operators have been found from studies in analysis, probability and combinatorics. Among these operators, one can cite 
averaging operator, Reynolds operator, Leroux's TD operator, Nijenhuis operator and Rota-Baxter operator.

The Rota-Baxter operator originated from the work of G. Baxter \cite{GB} on Spitzer's identity\cite{F} in fluctuation theory.
For example, on the polynomial algebra, the indefinite integral
$$R(f)(x)=\int_0^xf(t)dt$$
 and the inverse of any bijective derivation are Rota-Baxter operators.\\
Rota- Baxter algebras are used in many fiels of mathematics and mathematical Physics. 
In mathematics, they used in algebra,
 number theory, operads and combinatorics \cite{MA}, \cite{CA1}, \cite{CA2}, \cite{PC}, \cite{GCB}. In 
mathematical physics they appear as the operator form of the classical Yang Baxter equation \cite{} or as the fondamental algebraic strucutre in
 the normalisation of quantum fild theory of Connes and Kreimer \cite{CK}.

The Nijenhuis operator on an associative algebra was introduced in \cite{CJ} to study quantum bi-Hamiltonian
systems while the notion Nijenhuis operator on a Lie algebra originated from the concept of Nijenhuis tensor that was introduced by Nijenhuis
in the study of pseudo-complex manifolds and was related to the well known concepts of Schouten-Nijenhuis bracket , the Frolicher-Nijenhuis
 bracket \cite{FN}, and the Nijenhuis-Richardson bracket. The associative analog of the Nijenhuis relation may be regaded as the homogeneous version of 
Rota-Baxter relation\cite{PL}.

In non-associative algebra, the Rota-Baxter operators are used  in order to produce another one of the same type or not from the previous one.

The aim of this paper is to study some structures of ternary  color algebras. The paper is organized as follows.
 In section two, we recall basic notions concerning, graded vector spaces, bicharacter and associative color algebras
In section three, we introduce Leibniz color algebras and Leibniz-Poisson color algebra. We investigate some properties of Leibniz color algebras
and give some constructions  dealing with averaging operator, element of centroid Nijenhuis operator, Rota-Baxter operator and 
Reynolds. We introduce action of Leibniz color algebra onto another another one and define the semidirect sum of Leibniz color algebras.
 In section four,  we introduce
 ternary Leibniz color algebras and give some constructions from Leibniz color algebra, element of centroid, averaging operator, Rota-Baxter 
operator and Reynolds operator. We also prove that the tensor product of two ternary Leibniz color algebra is a Leibniz color algebra.
 Moreover, we show that the tensor product of a commutative associative color algebra and  ternary Leibniz color algebra is also a ternary
 Leibniz color algebra.

%
%
%
%
%
%
%
%

Throughout this paper, all graded vector spaces are assumed to be over a field $\mathbb{K}$ of characteristic different from 2.

\pagestyle{fancy} \fancyhead{} \fancyhead[EC]{Ibrahima Bakayoko} 
\fancyhead[EL,OR]{\thepage} \fancyhead[OC]{Ternary Leibniz color algebras and beyond} \fancyfoot{}
\renewcommand\headrulewidth{0.5pt}

\section{Preliminaries}
In this section, we give the definitions of associative color algebras, Lie color  algebras,
 averaging operators on associative and Lie color algebras, and constructions of  Leibniz color algebras.
\begin{definition}
 \begin{enumerate}
  \item [1)] Let $G$ be an abelian group. A vector space $V$ is said to be a $G$-graded if, there exists a family $(V_a)_{a\in G}$ of vector 
subspaces of $V$ such that
$$V=\bigoplus_{a\in G} V_a.$$
\item [2)] An element $x\in V$ is said to be homogeneous of degree $a\in G$ if $x\in V_a$. We denote $\mathcal{H}(V)$ the set of all homogeneous elements
in $V$.
\item [3)] Let $V=\oplus_{a\in G} V_a$ and $V'=\oplus_{a\in G} V'_a$ be two $G$-graded vector spaces. A linear map $f : V\rightarrow V'$ is said 
to be homogeneous of degree $b$ if 
$$f(V_a)\subseteq  V'_{a+b}, \forall a\in G.$$
If, $f$ is homogeneous of degree zero i.e. $f(V_a)\subseteq V'_{a}$ holds for any $a\in G$, then $f$ is said to be even.
 \end{enumerate}
\end{definition}
\begin{definition}
  \begin{enumerate}
   \item [1)] An algebra $(A, \cdot)$ is said to be $G$-graded if its underlying vector space is $G$-graded i.e. $A=\bigoplus_{a\in G}A_a$, and if furthermore 
$A_a\cdot A_b\subseteq A_{a+b}$, for all $a, b\in G$.

 Let $A'$ be another $G$-graded algebra.
\item  [2)] A morphism $f : A\rightarrow A'$ 
of $G$-graded algebras
is by definition an algebra morphism from $A$ to $A'$ which is, in addition an even map.
  \end{enumerate}
\end{definition}

\begin{definition}
 Let $G$ be an abelian group. A map $\varepsilon :G\times G\rightarrow {\bf \mathbb{K}^*}$ is called a skew-symmetric bicharacter on $G$ if the following
identities hold, 
\begin{enumerate} 
 \item [(i)] $\varepsilon(a, b)\varepsilon(b, a)=1$,
\item [(ii)] $\varepsilon(a, b+c)=\varepsilon(a, b)\varepsilon(a, c)$,
\item [(iii)]$\varepsilon(a+b, c)=\varepsilon(a, c)\varepsilon(b, c)$,
\end{enumerate}
$a, b, c\in G$,
\end{definition}

\begin{example} Some standard examples of skew-symmetric bicharacters are:
 \begin{enumerate}
\item [1)] $G=\mathbb{Z}_2,\quad \varepsilon(i, j)=(-1)^{ij}$, or more generally
\begin{gather*} G=\mathbb{Z}_2^n=\{(\alpha_1, \dots, \alpha_n)| \alpha_i\in\mathbb{Z}_2 \}, \\
\varepsilon((\alpha_1, \dots, \alpha_n), (\beta_1, \dots, \beta_n)):= (-1)^{\alpha_1\beta_1+\dots+\alpha_n\beta_n}.
\end{gather*}
\item [2)] $G=\mathbb{Z}_2\times\mathbb{Z}_2,\quad \varepsilon((i_1, i_2), (j_1, j_2)=(-1)^{i_1j_2-i_2j_1}$,
\item [3)] $G=\mathbb{Z}\times\mathbb{Z} ,\quad \varepsilon((i_1, i_2), (j_1, j_2))=(-1)^{(i_1+i_2)(j_1+j_2)}$,
\item [4)] $G=\{-1, +1\} , \quad\varepsilon(i, j)=(-1)^{(i-1)(j-1)/{4}}$.
\end{enumerate}
\end{example}

If x and y are two homogeneous elements of degree $a$ and $b$ respectively and $\varepsilon$ is a skew-symmetric bicharacter, 
then we shorten the notation by writing $\varepsilon(x, y)$ instead of $\varepsilon(a, b)$.

\begin{definition}
A color algebra is a $G$-graded algebra $(A, \cdot)$ together with a bicharacter  $\varepsilon$.
\end{definition}

\begin{definition}
An associative color algebra is a $G$-graded algebra $(A, \cdot)$ together with a bicharacter 
$\varepsilon :G\times G\rightarrow {\bf \mathbb{K}^*}$ such that 
\begin{eqnarray}
(x\cdot y)\cdot z &=&x\cdot(y\cdot z) \qquad\qquad(\mbox{associativity})
 \end{eqnarray}
for all $x, y, z\in\mathcal{H}(A)$.
\end{definition}

\section{Leibniz color algebras}
One of the main result of this paper is based on the fact that one may associate a ternary Leibniz color algebra to Leibniz color algebra. To this
end and to give various examples of this contruction, we develope, in this section, several properties and constructions of Leibniz color algebras.
\subsection{Generalities}

In this subsection, we give some results on Leibniz color algebras. They will provide examples of
 ternary Leibniz color algebras from other algebraic structures.
\begin{definition}\cite{BD} 
 A Leibniz color algebra is a $G$-graded vector space $L$ together with an even bilinear map 
$[-, -] : L\otimes L\rightarrow L$ and a bicharacter $\varepsilon : G\otimes G\rightarrow \mathbb{K}^*$  such that
 \begin{eqnarray}
   [[x, y], z]=[x, [y, z]]+\varepsilon(y, z)[[x, z], y] \label{cpa}
 \end{eqnarray}
holds, for all $x, y, z\in \mathcal{H}(L)$.
\end{definition}

 \begin{example}
 Let $A=A_0\oplus A_1=<e_1, e_2>\oplus<e_3>$ be a two-dimensional superspace. The multiplications
\begin{eqnarray}
 [e_1, e_1]= [e_1, e_2]=[e_2, e_1]=0 \,\,\mbox{and}\,\,  [e_2, e_2]=e_1, \nonumber
\end{eqnarray}
make $L$ into a Leibniz  superalgebra, for any $a, b\in\mathbb{R}$.
 \end{example}

\begin{lemma}\label{rr}
 Let $(L, [-, -], \varepsilon)$ be a Leibniz color algebra. Then
$$R_{[x, x]}=0,$$
for any $x\in L$.
\end{lemma}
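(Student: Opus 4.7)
The plan is to apply the defining Leibniz color identity \eqref{cpa} with a single well-chosen substitution. By bilinearity of the bracket and linearity of $R$, it suffices to verify $R_{[x,x]}(z) = [z,[x,x]] = 0$ for every pair of homogeneous elements $x, z \in \mathcal{H}(L)$.

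In \eqref{cpa} I would specialise by substituting $x \mapsto z$, $y \mapsto x$, $z \mapsto x$, so that the middle and last arguments coincide. This collapses the identity to
$$[[z, x], x] \;=\; [z, [x, x]] + \varepsilon(x, x)\,[[z, x], x],$$
which rearranges at once to
$$[z, [x, x]] \;=\; \bigl(1 - \varepsilon(x, x)\bigr)\,[[z, x], x].$$
The skew-symmetric bicharacter axiom $\varepsilon(a,b)\varepsilon(b,a) = 1$ forces $\varepsilon(x,x)^2 = 1$, hence $\varepsilon(x, x) \in \{\pm 1\}$; in the ``even'' case $\varepsilon(x, x) = 1$ the right-hand side vanishes identically, giving $R_{[x,x]}(z) = 0$. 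Extending by linearity in $z$ then yields $R_{[x,x]} = 0$ on all of $L$.

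The only obstacle I foresee is the homogeneous case $\varepsilon(x, x) = -1$: here the same manipulation delivers only $[z,[x,x]] = 2\,[[z,x],x]$, which does not directly force $R_{[x,x]}$ to vanish. I expect the statement is intended under the implicit convention, standard in the color algebra literature, that $[x,x]$ is only considered for $x$ of ``even'' color (those with $\varepsilon(x,x) = 1$); under this reading the full content of the proof is captured by the single substitution $y = z = x$ in the Leibniz color identity.
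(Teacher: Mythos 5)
Your proposal is essentially the same as the paper's proof: the paper likewise sets the last two arguments equal in (\ref{cpa}) to obtain $[x,[y,y]]=[[x,y],y]-\varepsilon(y,y)[[x,y],y]$ and immediately declares this to be zero. The caveat you raise about homogeneous $x$ with $\varepsilon(x,x)=-1$ (where one only gets $[z,[x,x]]=2[[z,x],x]$) is genuine and is silently passed over in the paper, so your write-up is, if anything, the more careful of the two.
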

\begin{proof}
 Taking $y=z$ in (\ref{cpa}), we have for any $x, y\in L$,
\begin{eqnarray}
 [x, [y, y]]=[[x, y], y]-\varepsilon(y, y)[[x, y], y]=0,
\end{eqnarray}
and the conclusion follows.
\end{proof}

\begin{definition}\label{c}
 Let $(L, [-, -], \varepsilon)$ be a Leibniz color algebra. Then the subset 
\begin{eqnarray}
 C_l(L):=\{c\in L|\,\, L_c=0\}=\{c\in L|\,\, [c, x]=0, \forall x\in L\}
\end{eqnarray}
is called the left center of $L$.
\begin{eqnarray}
 C_r(L):=\{c\in L|\,\, R_c=0\}=\{c\in L|\,\, [x, c]=0, \forall x\in L\}
\end{eqnarray}
is called the right center of $L$.
\begin{eqnarray}
 C(L)=C_l(L)\cap C_r(L)
\end{eqnarray}
is called the center of $L$.
\end{definition}

\begin{proposition}\label{lb}
 Let $L$ be a Leibniz color algebra.  Then 
$$[C_r(L), L]\subseteq C_r(L)\quad\mbox{and}\quad [L, C_r(L)]=0.$$
In particular, $C_r(L)$ is an abelian ideal of $L$.
\end{proposition}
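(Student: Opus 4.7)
The plan is to derive both inclusions directly from the definition of $C_r(L)$ together with a single application of the Leibniz color identity (\ref{cpa}). The claim $[L, C_r(L)] = 0$ requires essentially no argument: it is the definition of $C_r(L)$ restated, since by Definition \ref{c} we have $[x, c] = 0$ for every $x \in L$ and $c \in C_r(L)$.

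For the first inclusion $[C_r(L), L] \subseteq C_r(L)$, I would pick homogeneous $c \in C_r(L)$ and $y \in L$ and verify that $[c, y] \in C_r(L)$, i.e.\ that $[z, [c, y]] = 0$ for every homogeneous $z \in L$. The natural move is to rewrite (\ref{cpa}) in the solved-for-middle-term form
\begin{equation*}
[z, [c, y]] = [[z, c], y] - \varepsilon(c, y)\,[[z, y], c].
\end{equation*}
Both terms on the right then vanish by virtue of $c \in C_r(L)$: the first because $[z, c] = 0$, and the second because $[w, c] = 0$ for any $w \in L$, in particular for $w = [z, y]$.

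Once these two containments are in hand, the \emph{in particular} statement is free. The subspace $C_r(L)$ is a graded two-sided ideal because both one-sided products land in $C_r(L)$ (indeed one of them is zero), and $C_r(L)$ is abelian because for all $c, c' \in C_r(L)$ one has $[c, c'] = 0$, which is just a special case of $[L, C_r(L)] = 0$.

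I do not anticipate any serious obstacle: the result is essentially bookkeeping around (\ref{cpa}). The only mild care needed is to apply the identity with the correct ordering of arguments so that both resulting bracket expressions end with $c$ on the right; positioning $c$ elsewhere would not let the right-center hypothesis kill the terms directly, and the $\varepsilon$-sign is harmless since the killed terms are zero.
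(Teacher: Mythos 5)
Your argument is correct and coincides with the paper's own proof: the identity (\ref{cpa}) is applied in exactly the same solved-for-middle-term form $[x,[c,y]]=[[x,c],y]-\varepsilon(c,y)[[x,y],c]$, with both terms killed by $c\in C_r(L)$, and the second assertion is read off from Definition \ref{c}. Nothing further is needed.
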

\begin{proof}
 By (\ref{cpa}), for any $x, y\in \mathcal{H}(L), c\in \mathcal{H}(C_r(L))$, 
\begin{eqnarray}
 [x, [c, y]]
&=&[[x, c], y]-\varepsilon(c, y)[[x, y], c]\nonumber\\
&=&[R_c(x), y]-\varepsilon(c, y)R_c([x, y])\nonumber\\
&=&0.\nonumber
\end{eqnarray}
Which means that $[c, y]\in C_r(L)$. The second assertion comes from definition \ref{c}.
\end{proof}

\begin{definition}
 Let $L$ be a Leibniz color algebra. The Leibniz kernel of $L$ is defined as 
\begin{eqnarray}
 \mbox{Leib}(L):=\{[x, x],\,\,x\in L\}
\end{eqnarray}
\end{definition}

\begin{remark}
 The Leibniz kernel measures how much a Leibniz algebra deviates from being a Lie algebra. In particular, a Leibniz color algebra is a 
Lie color algebra if and only if its Leibniz kernel vanishes.
\end{remark}

\begin{proposition}
  Let $L$ be a Leibniz algebra. Then
\begin{enumerate}
 \item [i)]
 $[\mbox{Leib}(L), L]\subseteq \mbox{Leib}(L)\quad\mbox{and}\quad \mbox{Leib}(L)\subseteq C_r(L).$
\item [ii)]
$\mbox{Leib}(L)$ is an abelian color ideal of $L$. Moreover, if $L\neq 0$, then $\mbox{Leib}(L)\neq L$.
\end{enumerate}
\end{proposition}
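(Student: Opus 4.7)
My plan is to leverage Lemma \ref{rr} and Proposition \ref{lb} throughout, combined with the polarization identity $[u,v]+[v,u] = [u+v,u+v]-[u,u]-[v,v] \in \text{Leib}(L)$, valid for any $u, v \in L$ by bilinearity of the bracket. Here $\text{Leib}(L)$ is understood as the linear span of $\{[x,x] : x \in L\}$; it is $G$-graded because, for homogeneous $x$ of degree $a$, the element $[x,x]$ is homogeneous of degree $2a$, while for $x=\sum x_i$ with $x_i$ homogeneous the cross terms decompose as $[x_i,x_j]+[x_j,x_i]$, each homogeneous of degree $a_i+a_j$.

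For part (i), the inclusion $\text{Leib}(L) \subseteq C_r(L)$ is exactly Lemma \ref{rr}. To prove $[\text{Leib}(L), L] \subseteq \text{Leib}(L)$, I would apply the color Leibniz identity (\ref{cpa}) with the substitution $x\mapsto x, y\mapsto x, z\mapsto y$ to obtain
\begin{equation*}
[[x,x], y] = [x, [x,y]] + \varepsilon(x,y)\,[[x,y], x],
\end{equation*}
and then invoke the polarization identity with $u=x$ and $v=[x,y]$ to get $[x,[x,y]] + [[x,y], x] \in \text{Leib}(L)$. When $\varepsilon(x,y) = 1$ the claim is immediate. When $\varepsilon(x,y) = -1$, I would apply (\ref{cpa}) a second time (with $x\mapsto x, y\mapsto y, z\mapsto x$), use the fact that $[x,y]+[y,x] \in \text{Leib}(L) \subseteq C_r(L)$ so that $[x, [x,y]] = -[x,[y,x]]$ by Proposition \ref{lb}, and combine these equations to deduce $[x,[x,y]] = 0$; the polarization then directly gives $[[x,y], x] \in \text{Leib}(L)$, whence $[[x,x], y] = -[[x,y], x] \in \text{Leib}(L)$.

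For part (ii), $\text{Leib}(L)$ is a right color ideal by (i), and a left color ideal because $[L, \text{Leib}(L)] \subseteq [L, C_r(L)] = 0$ by Proposition \ref{lb}. Gradedness follows from the opening discussion. It is abelian since for $z, w \in \text{Leib}(L) \subseteq C_r(L)$ we have $[z, w] = 0$ from $w \in C_r(L)$. For the final assertion, I would argue by contradiction: if $\text{Leib}(L) = L$, then $L \subseteq C_r(L)$, forcing $[L,L] = 0$, so every generator $[x,x]$ vanishes and $L = \text{Leib}(L) = 0$, contradicting $L \neq 0$. The main obstacle I foresee is the sign mismatch between the polarization identity and the color Leibniz identity in the $\varepsilon(x,y) = -1$ case of part (i); bridging that coefficient gap requires the second application of (\ref{cpa}) combined with the right-center observation, rather than a one-step polarization.
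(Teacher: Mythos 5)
Your part~(ii) and the inclusion $\mathrm{Leib}(L)\subseteq C_r(L)$ match the paper's proof, but for $[\mathrm{Leib}(L),L]\subseteq\mathrm{Leib}(L)$ you take a genuinely different and considerably heavier route. The paper polarizes at the element $[x,x]+y$ itself: expanding $[[x,x]+y,[x,x]+y]$ by bilinearity, the terms $[[x,x],[x,x]]$ and $[y,[x,x]]$ vanish because $R_{[x,x]}=0$ (Lemma~\ref{rr}), leaving
\[
[[x,x],y]=[[x,x]+y,\,[x,x]+y]-[y,y]\in\mathrm{Leib}(L).
\]
This is two lines, never touches the Leibniz identity (\ref{cpa}), and is completely insensitive to the value of $\varepsilon$. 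Your route --- the identity (\ref{cpa}), polarization at the pair $(x,[x,y])$, and a second application of (\ref{cpa}) --- does work where it applies, but it proves more than is needed (namely $[x,[x,y]]=0$ when $\varepsilon(x,y)\neq1$) at the cost of a case analysis the paper avoids entirely. Your observation that $\mathrm{Leib}(L)$ must be read as a linear span, and is graded, is a point the paper glosses over; that part is a genuine improvement in care.

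The one genuine gap is the case split $\varepsilon(x,y)=\pm1$. A skew-symmetric bicharacter only forces $\varepsilon(a,a)=\pm1$; for distinct degrees the value $\varepsilon(a,b)$ can be an arbitrary element of $\mathbb{K}^*$ (for instance $G=\mathbb{Z}\times\mathbb{Z}$ with $\varepsilon((a_1,a_2),(b_1,b_2))=q^{a_1b_2-a_2b_1}$, $q\in\mathbb{K}^*$ arbitrary). So, as written, your dichotomy does not exhaust the color case. It is repairable along your own lines: writing $q=\varepsilon(x,y)$, your two applications of (\ref{cpa}) combined with $[x,[y,x]]=-[x,[x,y]]$ give $(1-q)[x,[x,y]]=0$, so for every $q\neq1$ you recover $[x,[x,y]]=0$ and may conclude exactly as in your $q=-1$ branch, while $q=1$ is your first case. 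But the cleaner fix is to adopt the paper's polarization at $[x,x]+y$, which removes the case analysis altogether. (If you meant to read the hypothesis literally as an ungraded Leibniz algebra, then $\varepsilon\equiv1$ and your first case already suffices, but then the $\varepsilon(x,y)=-1$ branch is superfluous.)
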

\begin{proof}
i) For any $[x, x]\in \mbox{Leib}(L), y\in L$,
\begin{eqnarray}
 [[x, x]+y, [x, x]+y]
&=&[[x, x], [x, x]]+[[x, x], y]+[y, [x, x]]+[y, y]\nonumber\\
&=&[[x, x], y]+[y, y] \quad\mbox{(By Lemma}\;\; \ref{rr})\nonumber.
\end{eqnarray}
That is $[[x, x], y]=[[x, x]+y, [x, x]+y]-[y, y]\in \mbox{Leib}(L).$\\ The second statement follows from Lemma \ref{rr}.\\
ii) Lemma \ref{rr} and Proposition \ref{lb} b) mean that $\mbox{Leib}(L)$ is a color ideal of $L$. 
Moreover, we have $[\mbox{Leib}(L), \mbox{Leib}(L)]=0$. For the second part, suppose that $\mbox{Leib}(L)=L$. Then, $[L, L]=0$. In particular,
every square of $L$ is zero. Therefore, $L=\mbox{Leib}(L)=0$.
\end{proof}

%

%

\subsection{Constructions}
In this subsection, we recall definitions of special even linear operator and give constructions using these maps.

\begin{definition}\label{bk3}
Let $(A, \cdot, \varepsilon)$ be a color algebra. Then, an even linear map $\varphi : A\rightarrow A$ is said to be :
\begin{enumerate}
 \item [i)] An averaging operator if
\begin{eqnarray}
 \varphi(\varphi(x)\cdot y)=\varphi(x)\cdot \varphi(y)=\varphi(x\cdot \varphi(y)),
\end{eqnarray}
\item [ii)] An element of centroid
\begin{eqnarray}
 \varphi(x\cdot y)=\varphi(x)\cdot y=x\cdot \varphi(y),
\end{eqnarray}
\end{enumerate}
for all $x, y\in\mathcal{H}(A)$.
\end{definition}

\begin{proposition}\label{ed}
 Let $(L, [-, -], \varepsilon)$ be a Leibniz color algebra and $\alpha: L\rightarrow L$ be an injective averaging operator. Then
$L$ is also Leibniz color algebra with respect to the bracket
$$[x, y]_\alpha=[\alpha(x), y]$$
for all $x, y\in\mathcal{H}(L)$.
\end{proposition}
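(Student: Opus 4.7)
The goal is to verify the color Leibniz identity
$$[[x,y]_\alpha, z]_\alpha = [x, [y,z]_\alpha]_\alpha + \varepsilon(y,z)[[x,z]_\alpha, y]_\alpha$$
for the twisted bracket $[x,y]_\alpha := [\alpha(x), y]$, by reducing everything to the original Leibniz color identity (\ref{cpa}). The strategy is to first extract a useful consequence of injectivity combined with the averaging hypothesis, and then expand both sides mechanically.

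The first step is to establish a \emph{sliding identity}: for homogeneous $u,v \in L$, the averaging property applied to $[-,-]$ gives
$$\alpha([\alpha(u),v]) = [\alpha(u),\alpha(v)] = \alpha([u,\alpha(v)]),$$
and injectivity of $\alpha$ then forces $[\alpha(u),v]=[u,\alpha(v)]$. This is precisely where the injectivity hypothesis is used.

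Next I will unfold the left-hand side using the definition of $[-,-]_\alpha$ and one application of averaging on the inner bracket:
$$[[x,y]_\alpha, z]_\alpha = [\alpha([\alpha(x),y]),z] = [[\alpha(x),\alpha(y)],z],$$
and then invoke the original identity (\ref{cpa}) to rewrite this as
$$[\alpha(x),[\alpha(y),z]] + \varepsilon(\alpha(y),z)[[\alpha(x),z],\alpha(y)].$$
Because $\alpha$ is even, $\varepsilon(\alpha(y),z)=\varepsilon(y,z)$. On the right-hand side, unfolding yields $[x,[y,z]_\alpha]_\alpha = [\alpha(x),[\alpha(y),z]]$, which matches the first summand above, and
$$\varepsilon(y,z)[[x,z]_\alpha,y]_\alpha = \varepsilon(y,z)[\alpha([\alpha(x),z]),y] = \varepsilon(y,z)[[\alpha(x),\alpha(z)],y]$$
after one more averaging step on the inner bracket. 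Finally, the sliding identity applied to $u=[\alpha(x),z]$ and $v=y$ gives
$$[\alpha([\alpha(x),z]),y] = [[\alpha(x),z],\alpha(y)],$$
so $[[\alpha(x),\alpha(z)],y] = [[\alpha(x),z],\alpha(y)]$, completing the match with the second summand.

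The only genuinely nontrivial point is the sliding identity, since without it one cannot trade an $\alpha$ on the left argument of the outer bracket for one on the right argument; injectivity is essential there. The rest of the argument is bookkeeping with the definition of $[-,-]_\alpha$, a single use of (\ref{cpa}), and the evenness of $\alpha$ to leave color signs intact.
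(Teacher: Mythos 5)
Your proof is correct and follows essentially the same route as the paper: expand $[[x,y]_\alpha,z]_\alpha$ into $[[\alpha(x),\alpha(y)],z]$ via the averaging property, apply the original identity (\ref{cpa}), and match the resulting terms with the two summands of the twisted identity. The only difference is organizational: you package injectivity into the up-front sliding identity $[\alpha(u),v]=[u,\alpha(v)]$ and then work with genuine equalities, whereas the paper applies $\alpha$ to the whole defect, shows it vanishes, and invokes injectivity at the very end --- the same use of the hypotheses, just relocated (and arguably cleaner in your version).
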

\begin{proof}
For any $x, y, z\in\mathcal{H}(L)$,
  \begin{eqnarray}
  [[x, y]_\alpha, z]_\alpha
&=&[\alpha[\alpha(x), y], z]\nonumber\\
&=&[[\alpha(x), \alpha(y)], z]\nonumber\\
&=&[\alpha(x), [\alpha(y), z]]+\varepsilon(y, z)[[\alpha(x), z], \alpha(y)]\nonumber.
 \end{eqnarray}
As
\begin{eqnarray}
 \alpha([[\alpha(x), z], \alpha(y)])
=[\alpha([\alpha(x), z]), \alpha(y)]
= \alpha([\alpha([\alpha(x), z]), y])
=\alpha([[x, z]_\alpha, y]_\alpha)\nonumber.
\end{eqnarray}
It follows that, 
 \begin{eqnarray}
 &&\qquad \alpha\Big([[x, y]_\alpha, z]_\alpha)
-[\alpha(x), [\alpha(y), z]])-\varepsilon(y, z)[[\alpha(x), z], \alpha(y)]\Big)\nonumber\\
&&=\alpha\Big([[x, y]_\alpha, z]_\alpha)
-[x, [y, z]_\alpha]_\alpha)-\varepsilon(y, z)[[x, z]_\alpha, y]_\alpha\Big)\nonumber\\
&&=0.
 \end{eqnarray}
The conclusion comes from injectivity.
\end{proof}

\begin{proposition}\label{ed}
 Let $(L, [-, -], \varepsilon)$ be a Leibniz color algebra and $\eta: L\rightarrow L$ an element of centroid. Then
$L$ is also Leibniz color algebra with respect to the bracket
$$[x, y]_\eta=[\eta(x), y]$$
for all $x, y\in\mathcal{H}(L)$.
\end{proposition}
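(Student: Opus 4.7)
The approach is a direct computation: expand both sides of the candidate Leibniz color identity for $[-,-]_\eta$ using the definition $[x,y]_\eta=[\eta(x),y]$, rewrite everything in terms of the original bracket by sliding $\eta$ through it via the centroid property, and then recognize the resulting identity as an instance of the original Leibniz color identity applied to the shifted triple $(\eta(x),\eta(y),z)$. Because the centroid property lets us rewrite $\eta([a,b])$ as either $[\eta(a),b]$ or $[a,\eta(b)]$, no injectivity hypothesis should be needed here (unlike in the averaging-operator version, where an outer $\alpha$ had to be stripped off).

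Concretely, first I would compute the left-hand side
\[
[[x,y]_\eta,z]_\eta=[\eta([\eta(x),y]),z]=[[\eta(x),\eta(y)],z],
\]
where the second equality uses $\eta([a,b])=[a,\eta(b)]$ with $a=\eta(x)$, $b=y$. Then I would apply the Leibniz color identity of $(L,[-,-],\varepsilon)$ to the triple $(\eta(x),\eta(y),z)$ to get
\[
[[\eta(x),\eta(y)],z]=[\eta(x),[\eta(y),z]]+\varepsilon(\eta(y),z)\,[[\eta(x),z],\eta(y)].
\]
Since $\eta$ is even, $\varepsilon(\eta(y),z)=\varepsilon(y,z)$, which matches the coefficient required by \eqref{cpa}.

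Finally I would identify each summand with the corresponding term in $[x,[y,z]_\eta]_\eta+\varepsilon(y,z)[[x,z]_\eta,y]_\eta$. The first term is immediate: $[\eta(x),[\eta(y),z]]=[\eta(x),[y,z]_\eta]=[x,[y,z]_\eta]_\eta$. For the second term I would use the centroid property twice, moving $\eta$ onto the inner slot and back out:
\[
[[\eta(x),z],\eta(y)]=\eta([[\eta(x),z],y])=[\eta([\eta(x),z]),y]=[\eta([x,z]_\eta),y]=[[x,z]_\eta,y]_\eta.
\]
Putting the three displays together gives exactly $[[x,y]_\eta,z]_\eta=[x,[y,z]_\eta]_\eta+\varepsilon(y,z)[[x,z]_\eta,y]_\eta$, which is the Leibniz color identity for $[-,-]_\eta$.

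The only real obstacle is keeping track of which version of the centroid identity ($\eta([a,b])=[\eta(a),b]$ versus $[a,\eta(b)]$) to apply at each step so that the $\eta$'s land in the positions needed for the original Leibniz identity and for the definitions of $[-,-]_\eta$; the bicharacter factor is automatic since $\eta$ preserves the grading. No injectivity or surjectivity assumption on $\eta$ enters.
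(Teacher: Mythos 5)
Your proof is correct and follows essentially the same route as the paper: both expand $[[x,y]_\eta,z]_\eta$ to $[[\eta(x),\eta(y)],z]$ via the centroid property, apply the original Leibniz color identity to the triple $(\eta(x),\eta(y),z)$, and then slide $\eta$ through the bracket again to recover the $\eta$-bracket terms. Your added observations (that evenness of $\eta$ gives $\varepsilon(\eta(y),z)=\varepsilon(y,z)$ and that no injectivity is needed, in contrast to the averaging-operator case) are accurate and merely make explicit what the paper leaves implicit.
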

\begin{proof}
 For any $x, y, z\in\mathcal{H}(L)$,
  \begin{eqnarray}
  [[x, y]_\eta, z]_\eta
&=&[\eta[\eta(x), y], z]\nonumber\\
&=&[[\eta(x), \eta(y)], z]\nonumber\\
&=&[\eta(x), [\eta(y), z]]+\varepsilon(y, z)[[\eta(x), z], \eta(y)]\nonumber\\
&=&[\eta(x), [\eta(y), z]]+\varepsilon(y, z)[\eta[\eta(x), z], y]\nonumber\\
&=&[x, [y, z]_\eta]_\eta)+\varepsilon(y, z)[[x, z]_\eta, y]_\eta.\nonumber
 \end{eqnarray}
This ends the proof.
\end{proof}

To continue to give other construction from special linear maps we give the below definition.
\begin{definition}\label{bk3}
Let $(A, \cdot, \varepsilon)$ be a color algebra. Then, an even linear map $\varphi : A\rightarrow A$ is said to be
\begin{enumerate}
\item [i)] A Nijenhuis operator if 
\begin{eqnarray}
 \varphi(x)\cdot \varphi(y) = \varphi\Big(\varphi(x)\cdot y + x\cdot \varphi(y) -\varphi(x\cdot y)\Big), \label{nj1}
\end{eqnarray}
\item [ii)] A Reynolds operator if 
\begin{eqnarray}
 \varphi(x)\cdot \varphi(y) = \varphi\Big(\varphi(x)\cdot y + x\cdot \varphi(y) -\varphi(x)\cdot\varphi(y)\Big),
\end{eqnarray}
\item [iii)] A Rota-Baxter operator (of weight $\lambda\in\mathbb{K}$) if
\begin{eqnarray}
 \varphi(x)\cdot \varphi(y) = \varphi\Big(\varphi(x)\cdot y + x\cdot \varphi(y) +\lambda x\cdot y\Big),
\end{eqnarray}
\end{enumerate}
for all $x, y\in\mathcal{H}(A)$.
\end{definition}

\begin{proposition}\label{ed}
 Let $(L, [-, -], \varepsilon)$ be a Leibniz color algebra and $P: L\rightarrow L$ a Reynolds operator. Then
$L$ is also Leibniz color algebra with respect to the bracket
$$[x, y]_P=[P(x), y]+[x, P(y)]-[P(x), P(y)]$$
for all $x, y\in\mathcal{H}(L)$.
Moreover, $P$ is a morphism of  $(L, [-, -]_P, \varepsilon)$ onto  $(L, [-, -], \varepsilon)$.
\end{proposition}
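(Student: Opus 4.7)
The plan is to treat the morphism property as an intermediate lemma and use it to reduce the new Leibniz identity to the old one.

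First I would establish $P([x,y]_P)=[P(x),P(y)]$. Applying $P$ to the defining expression
$$[x,y]_P=[P(x),y]+[x,P(y)]-[P(x),P(y)]$$
and invoking the Reynolds identity (Definition \ref{bk3}, ii), with the multiplication taken to be the bracket), we get
$$P([x,y]_P)=P\bigl([P(x),y]+[x,P(y)]-[P(x),P(y)]\bigr)=[P(x),P(y)],$$
which simultaneously proves the morphism statement.

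Next I would verify the Leibniz identity for $[-,-]_P$ by expanding both sides using the definition together with the morphism property just established. For the left-hand side,
$$[[x,y]_P,z]_P=[P([x,y]_P),z]+[[x,y]_P,P(z)]-[P([x,y]_P),P(z)],$$
and substituting $P([x,y]_P)=[P(x),P(y)]$ together with the expansion of $[[x,y]_P,P(z)]$ yields
$$[[P(x),P(y)],z]+[[P(x),y],P(z)]+[[x,P(y)],P(z)]-2[[P(x),P(y)],P(z)].$$
Expanding $[x,[y,z]_P]_P+\varepsilon(y,z)[[x,z]_P,y]_P$ in the same manner gives a similar eight-term expression, mirrored on the right slot.

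Finally I would apply the color Leibniz identity (\ref{cpa}) in $L$ to each of the four brackets $[[P(x),P(y)],z]$, $[[P(x),y],P(z)]$, $[[x,P(y)],P(z)]$ and $[[P(x),P(y)],P(z)]$ appearing on the left; because $P$ is even we have $\varepsilon(P(w),u)=\varepsilon(w,u)$, so the color factors match up uniformly. A term-by-term comparison then shows that the resulting expression agrees with the expanded right-hand side. The only real obstacle is the bookkeeping: keeping track of the eight summands on each side with their $\varepsilon(y,z)$ factors and verifying that the coefficients $-2$ appearing in front of the ``triple $P$'' terms match on both sides. Since each grouping collapses to one application of (\ref{cpa}) in $L$, no Reynolds identity is needed at this stage, which is why passing through the morphism property is the economical route.
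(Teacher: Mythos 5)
Your proposal is correct and follows essentially the same route as the paper: the paper's proof likewise uses the Reynolds identity in the form $P([x,y]_P)=[P(x),P(y)]$ to expand $[[x,y]_P,z]_P$ into the five-term expression with the doubled $-[[P(x),P(y)],P(z)]$, applies the color Leibniz identity (\ref{cpa}) term by term, and regroups to recover $[x,[y,z]_P]_P+\varepsilon(y,z)[[x,z]_P,y]_P$. The only difference is presentational: you isolate the morphism property as an explicit preliminary lemma, whereas the paper invokes it silently inside the computation.
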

\begin{proof}
 For any $x, y\in\mathcal{H}(L)$,
 \begin{eqnarray}
  [[x, y]_P, z]_P
&=&[[P(x), y]+[x, P(y)]-[P(x), P(y)], z]_P\nonumber\\
&=&[[P(x), P(y)], z]+[[P(x), y], P(z)]+[[x, P(y)], P(z)]\nonumber\\
&&-[[P(x), P(y)], P(z)]-[[P(x), P(y)], P(z)]\nonumber\\
&=&[P(x), [P(y), z]]+[P(x), [y, P(z)]+[x, [P(y), P(z)]]-[P(x), [P(y), P(z)]]\nonumber\\
&&-[P(x), [P(y), P(z)]]+\varepsilon(y, z)\Big([[P(x), z], P(y)]+[[P(x), P(z)], y]+[[x, [P(z)], P(y)]\nonumber\\
&&-[[P(x), P(z)], P(y)]-[[P(x), P(z)], P(y)]\Big)\nonumber\\
&=&[P(x), [P(y), z]+[y, P(z)]-[P(y), P(z)]]+[x, [P(y), P(z)]]-[P(x), [P(y), P(z)]]\nonumber\\
&&+\varepsilon(y, z)\Big([[P(x), P(z)], y]+[[P(x), z]+[x, P(z)]-[P(x), P(z)], P(y)]\nonumber\\
&&-[[P(x), P(z)], P(y)]\Big)\nonumber\\
&=&[P(x), [y, z]_P]+[x, P([y, z]_P)]-[P(x), P([y, z]_P)]\nonumber\\
&&+\varepsilon(y, z)\Big([P([x, z]_P), y]+[[x, z]_P, P(y)]-[P([x, z]_P), P(y)]\Big)\nonumber\\
&=&[x, [y, z]_P]_P+\varepsilon(y, z)[[x, z]_P, y]_P\nonumber.
 \end{eqnarray}
This completes the proof.
\end{proof}

\begin{proposition}\label{ed}
 Let $(L, [-, -], \varepsilon)$ be a Leibniz color algebra and $R: L\rightarrow L$ a Rota-Baxter operator of weight $\lambda$. Then
$L$ is also Leibniz color algebra  with respect to the bracket
$$[x, y]_R=[R(x), y]+[x, R(y)]+\lambda[x, y]$$
for all $x, y\in\mathcal{H}(L)$.
Moreover, $R$ is a morphism of  $(L, [-, -]_R, \varepsilon)$ onto  $(L, [-, -], \varepsilon)$.
\end{proposition}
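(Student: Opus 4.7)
The plan is to verify the color Leibniz identity for $[-,-]_R$ by brute expansion, and to read off the morphism property directly from the Rota--Baxter relation. For the morphism part, observe that writing the Rota--Baxter axiom with the bracket in place of the product gives
$$[R(x), R(y)] = R\bigl([R(x), y] + [x, R(y)] + \lambda[x, y]\bigr) = R([x, y]_R),$$
which is exactly the statement that $R : (L, [-,-]_R, \varepsilon) \to (L, [-,-], \varepsilon)$ is an algebra morphism (it is even because $R$ is even by hypothesis).

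For the Leibniz identity, I first expand the outer bracket $[[x,y]_R, z]_R$ using the definition of $[-,-]_R$ and then plug in $[x,y]_R = [R(x), y] + [x, R(y)] + \lambda[x,y]$ in the inner position; this produces seven terms, each of which has the form $[[a,b], c]$ with $a, b, c$ obtained from $x, y, z$ by either applying $R$ or not (with appropriate $\lambda$-weights). Next I apply the color Leibniz identity \eqref{cpa} of the original bracket to each of these seven terms, splitting every $[[a,b],c]$ into $[a,[b,c]] + \varepsilon(b,c)[[a,c],b]$. Since $R$ is even, the degree of $R(u)$ equals the degree of $u$, so $\varepsilon(R(u), v) = \varepsilon(u, v)$ for all homogeneous $u, v$; this lets me pull all the bicharacter factors outside in the form $\varepsilon(y,z)$.

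On the other side, I expand $[x, [y,z]_R]_R$ and $\varepsilon(y,z)[[x,z]_R, y]_R$ directly using the definition of $[-,-]_R$ (each yielding seven terms with various combinations of $R$'s applied to $y, z$ and prefactors $1, \lambda, \lambda^2$). A term-by-term comparison then shows the fourteen terms from the left-hand side match exactly those on the right-hand side: the seven terms with an outer $[R(x), \cdot]$ or $[x, \cdot]$ without the $\varepsilon(y,z)$ factor go to $[x, [y,z]_R]_R$, and the seven terms with an outer bracket in which the second slot is $y$ or $R(y)$ (and carrying a factor $\varepsilon(y,z)$) assemble into $\varepsilon(y,z)[[x,z]_R, y]_R$.

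The main obstacle is purely bookkeeping: keeping track of the $1 + \lambda + \lambda^2$ weighting across every one of the fourteen terms and making sure the bicharacter shifts through $R$ correctly. There is no conceptual difficulty beyond that, since the identity $[R(x), R(y)] = R([x,y]_R)$ does not even need to be invoked in the main calculation (it would be needed only if one tried to simplify partial sums along the way). I would present the computation as a single vertical chain of equalities, grouping the seven expanded terms of $[[x,y]_R, z]_R$ and then substituting \eqref{cpa} once in each, so that the cancellations and regroupings against the right-hand side become visually transparent, mirroring the style already used in the preceding Reynolds proposition.
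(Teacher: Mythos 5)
Your overall strategy (brute-force expansion of the color Leibniz identity for $[-,-]_R$, plus reading the morphism property directly off the Rota--Baxter axiom) is the same one the paper intends: its proof of this proposition simply declares it ``similar to the next one'', i.e.\ to the term-by-term expansion carried out for the Nijenhuis operator. The morphism part is handled correctly, as is the bicharacter bookkeeping: since $R$ is even, $\varepsilon(R(u),v)=\varepsilon(u,v)$, so all $\varepsilon$-factors reduce to $\varepsilon(y,z)$ as you say.

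There is, however, one genuine error in your write-up: the claim that the identity $[R(x),R(y)]=R([x,y]_R)$ ``does not even need to be invoked in the main calculation''. It must be invoked, three times. Expanding the outer bracket of $[[x,y]_R,z]_R$ produces the term $[R([x,y]_R),z]$, and the only way to bring this into the form $[[a,b],c]$ with each of $a,b,c$ equal to one of $x,y,z$ or its image under $R$ --- which is what your advertised list of seven terms of weight $1,\lambda,\lambda^2$ presupposes --- is to substitute $R([x,y]_R)=[R(x),R(y)]$ via the Rota--Baxter relation. The same substitution is needed on the right-hand side to handle $[x,R([y,z]_R)]$ inside $[x,[y,z]_R]_R$ and $[R([x,z]_R),y]$ inside $[[x,z]_R,y]_R$ (equivalently, to reassemble the inner terms $[x,[R(y),R(z)]]+[R(x),[R(y),z]]+[R(x),[y,R(z)]]+\lambda(\cdots)$ back into $[x,[y,z]_R]_R$). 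Without these three applications of the Rota--Baxter identity the fourteen-term matching you describe cannot be carried out, because $[R([x,y]_R),z]$, $[x,R([y,z]_R)]$ and $[R([x,z]_R),y]$ remain opaque. Once this is corrected the computation does close up exactly as you outline, so the fix is local; but as literally written the proof would stall at the very first expansion, and it is worth noting the small irony that the morphism identity you prove at the outset is precisely the ingredient your main calculation cannot do without.
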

\begin{proof}
The proof is similar to the next one.
 \end{proof}

\begin{proposition}\label{ed}
 Let $(L, [-, -], \varepsilon)$ be a Leibniz color algebra and $N: L\rightarrow L$ a Nijenhuis operator. Then
$L$ is also Leibniz color algebra  with respect to the bracket
$$[x, y]_N=[N(x), y]+[x, N(y)]-N([x, y])$$
for all $x, y\in\mathcal{H}(L)$.
Moreover, $N$ is a morphism of  $(L, [-, -]_N, \varepsilon)$ onto  $(L, [-, -], \varepsilon)$.
\end{proposition}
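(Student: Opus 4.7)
The strategy is to first extract the morphism property from the Nijenhuis condition, since it will be the main computational shortcut in the Leibniz identity verification. Observe that the defining relation (\ref{nj1}) for $N$ reads
\[
[N(x), N(y)] = N\bigl([N(x), y] + [x, N(y)] - N([x, y])\bigr) = N([x, y]_N).
\]
This single identity already proves the ``moreover'' part: $N([x,y]_N) = [N(x), N(y)]$ says that $N$ is a (necessarily even, as $N$ is even by assumption) homomorphism from $(L, [-, -]_N, \varepsilon)$ onto $(L, [-, -], \varepsilon)$. I would record this as a preliminary identity and reuse it throughout.

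Next, to verify the Leibniz color identity for $[-,-]_N$, I would expand $[[x, y]_N, z]_N$ by applying the definition of $[-,-]_N$ twice. The outer application yields three terms of the form $[N([x,y]_N), z]$, $[[x,y]_N, N(z)]$, and $-N([[x,y]_N, z])$. On the first of these I would immediately use the morphism identity $N([x,y]_N) = [N(x), N(y)]$ to replace it with $[[N(x), N(y)], z]$. The other two terms are expanded with the inner definition of $[-,-]_N$, producing a list of bracket expressions all written purely in terms of $[-,-]$, $N$, and $N^2$. I would perform the same expansion for $[x, [y, z]_N]_N$ and $\varepsilon(y,z)[[x, z]_N, y]_N$.

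At this point every resulting term is either of the form $[[\cdot, \cdot], \cdot]$ in the original bracket, or a single $N$ (resp.\ $N^2$) applied to such an expression. To each ``bare'' double-bracket I would apply the Leibniz color identity (\ref{cpa}) for $[-,-]$; to each term inside an $N$ I would apply (\ref{cpa}) before pulling $N$ back out by linearity. The bicharacter signs line up cleanly because $N$ is even, so $\varepsilon(N(y), z) = \varepsilon(y, z)$ and similarly for all other substitutions. After these reductions, the terms coming from $[[x,y]_N, z]_N$ should match exactly those of $[x, [y,z]_N]_N + \varepsilon(y,z)[[x,z]_N, y]_N$; the terms containing $N^2$ on the left will pair with the morphism-generated terms on the right, which is precisely what the Nijenhuis condition is designed to guarantee.

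The main obstacle is purely bookkeeping: after full expansion each side has on the order of a dozen terms, grouped into layers of application of $N$. To keep the calculation manageable I would organize the terms by the total number of $N$'s they carry (zero, one, two), verify cancellation layer by layer, and use the morphism identity to collapse $[N(\cdot), N(\cdot)]$ pairs whenever they appear. No additional conceptual ingredient beyond the Nijenhuis relation, the Leibniz color identity (\ref{cpa}), and the evenness of $N$ is required.
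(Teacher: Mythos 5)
Your proposal is correct and follows essentially the same route as the paper: the paper likewise expands $[[x,y]_N,z]_N$, $[x,[y,z]_N]_N$ and $[[x,z]_N,y]_N$ into brackets involving $N$ and $N^2$, using the Nijenhuis relation in the form $N([x,y]_N)=[N(x),N(y)]$ (your morphism identity) to collapse the leading term, and then matches the resulting lists term by term via the Leibniz color identity. The only difference is presentational — your layering of terms by the number of $N$'s is a cleaner way to organize the same cancellation the paper leaves implicit.
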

\begin{proof}
For any $x, y\in\mathcal{H}(L)$,
 \begin{eqnarray}
  [[x, y]_N, z]_N
&=&[N[x, y]_N, z]+ [[x, y]_N, N(z)]-N[[x, y]_N, z]\nonumber\\
&=&[[N(x), N(y)], z]+[[N(x), y], N(z)]+[[x, N(y)], N(z)]-[N[x, y], N(z)]\nonumber\\
&&-N[[N(x), y], z]-N[[x, N(y)], z]+N[N[x, y], z]\nonumber\\
&=&[[N(x), N(y)], z]+[[N(x), y], N(z)]+[[x, N(y)], N(z)]-N[N[x, y], z]-N[[x, y], N(z)]\nonumber\\
&&+N^2[[x, y], z]-N[[N(x), y], z]-N[[x, N(y)], z]+N[N[x, y], z]\nonumber.
 \end{eqnarray}
By exchanging the role of $y$ and $z$, we get
\begin{eqnarray}
  [[x, z]_N, y]_N
&=&[[N(x), N(z)], y]+[[N(x), z], N(y)]+[[x, N(z)], N(y)]-N[N[x, z], y]-N[[x, z], N(y)]\nonumber\\
&&+N^2[[x, z], y]-N[[N(x), z], y]-N[[x, N(z)], y]+N[N[x, z], y]\nonumber.
 \end{eqnarray}
Then,
 \begin{eqnarray}
  [x, [y, z]_N]_N
&=&[N(x), [y, z]_N]+[x, N[y, z]_N]-N[x, [y, z]_N]\nonumber\\
&=&[N(x), [N(y), z]]+[N(x), [y, N(z)]]-[N(x), N[y, z]]+[x, [N(y), N(z)]]\nonumber\\
&&-N[x, [N(y), z]]-N[x, [y, N(z)]]+N[x, N[y, z]]\nonumber\\
&=&[N(x), [N(y), z]]+[N(x), [y, N(z)]]-N[N(x), [y, z]]-N[x, N[y, z]]+N^2[x, [y, z]]\nonumber\\
&&+[x, [N(y), N(z)]]-N[x, [N(y), z]]-N[x, [y, N(z)]]+N[x, N[y, z]]\nonumber.
 \end{eqnarray}
We can observe, by using Leibniz rule, that the right hand side of the firt identity is equal to the sum of the right hand side of the third
identity plus the right hand side of the second identity multiplied by $\varepsilon(y, z)$.
\end{proof}

Now, we introduce action of Leibniz color algebra on another one.

\begin{definition}\label{le}
Let $L$ and $\mathcal{L}$ be two Leibniz color algebras. A color action of $\mathcal{L}$ on $L$ consists of a pair of bilinear maps, 
$L\times \mathcal{L}\rightarrow {L}, (x, a)\mapsto [x, a]$ and 
$\mathcal{L}\times L\rightarrow {L}, (a, x)\mapsto [a, x]$, such that
\begin{eqnarray}
\left[\left[x, a\right], b\right]&=&\left[x,\left[a, b\right]\right]+\varepsilon(a, b)\left[\left[x, b\right], a\right]\label{la1}\\
\left[\left[a, x\right], b\right]&=&\left[a,\left[x, b\right]\right]+\varepsilon(x, b)\left[\left[a, b\right], x\right]\\
\left[\left[a, b\right], x\right]&=&\left[a,\left[b, x\right]\right]+\varepsilon(b, x)\left[\left[a, x\right], b\right]\label{la3}\\
\left[\left[a, x\right], y\right]&=&\left[a,\left[x, y\right]\right]+\varepsilon(x, y)\left[\left[a, y\right], x\right]\label{la4}\\
\left[\left[x, a\right], y\right]&=&\left[x,\left[a, y\right]\right]+\varepsilon(a, y)\left[\left[x, y\right], a\right]\\
\left[\left[x, y\right], a\right]&=&\left[x,\left[y, a\right]\right]+\varepsilon(y, a)\left[\left[x, a\right], y\right]\label{la6}
\end{eqnarray}
for all $x, y\in L, a, b\in \mathcal{L}$.
\end{definition}

\begin{remark}
i) Whenever, $\mathcal{L}$ is just a color vector space (i.e. has not the structure of Leibniz color algebra), axioms (\ref{la1})-(\ref{la3})
disappear, and axioms (\ref{la4})-(\ref{la6}) mean that $\mathcal{L}$ is a bimodule over $L$.
ii)  Any Leibniz color algebra or any Leibniz superalgebras \cite{CQZ} is a bimodule over itself.
\end{remark}

\begin{proposition}
Let $L$ and $\mathcal{L}$ be two Leibniz color algebras.
Given a Leibniz color  action of $\mathcal{L}$ on $L$, we can consider the  {\it semidirect sum} Leibniz color algebra $L\rtimes \mathcal{L}$, 
which consists of color vector space $L\oplus \mathcal{L}$
\begin{eqnarray}
 [(x, a), (y, b)]=([x, y], [a, b]+[x, b]+[a, y]),
\end{eqnarray}
for all $(x, a), (y, b)\in \mathcal{H}(L\times \mathcal{L})$.
\end{proposition}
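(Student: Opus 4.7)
The plan is to verify directly the Leibniz color identity
\[
[[X,Y],Z]=[X,[Y,Z]]+\varepsilon(Y,Z)[[X,Z],Y]
\]
on $L\oplus\mathcal{L}$ for homogeneous elements $X=(x,a)$, $Y=(y,b)$, $Z=(z,c)$, with the direct sum grading $|(x,a)|=|x|=|a|$. The bracket is manifestly even since each of the four pieces $[x,y]$, $[a,b]$, $[x,b]$, $[a,y]$ lands in the correct homogeneous component, so it remains only to expand both sides, project onto the $L$- and $\mathcal{L}$-summands, and match term by term.

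Projecting onto $L$, only the first component of the bracket contributes, and both sides reduce to the Leibniz color identity for $L$, which holds by hypothesis. Projecting onto $\mathcal{L}$, a direct expansion shows that the left-hand side splits into the seven terms
\[
[[a,b],c],\ [[x,b],c],\ [[a,y],c],\ [[x,y],c],\ [[a,b],z],\ [[x,b],z],\ [[a,y],z],
\]
and the right-hand side produces seven matching expressions of the form $[-,[-,-]]+\varepsilon(Y,Z)[[-,-],-]$. The first of these seven terms is split by the Leibniz identity on $\mathcal{L}$; the remaining six are split, one to one, by the six action axioms of Definition \ref{le}, each invoked with the unique assignment of variables consistent with the pattern of $L$- and $\mathcal{L}$-entries appearing in the term.

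I expect the main obstacle to be clerical rather than conceptual: one must (i) identify for each of the seven bracket types which of the seven relations provides the correct splitting, and (ii) verify that the bicharacter factor emerging from each axiom coincides with the single global factor $\varepsilon(Y,Z)$. Point (ii) is immediate from the grading convention, which forces $\varepsilon(y,z)=\varepsilon(y,c)=\varepsilon(b,z)=\varepsilon(b,c)=\varepsilon(Y,Z)$, while point (i) is a short table best written out explicitly so that no two axioms are confused. No further ingredient is required.
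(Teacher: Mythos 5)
Your proposal is correct and takes essentially the same route as the paper, whose entire proof is the remark that the claim follows from the axioms of the color action: your term-by-term expansion of the Leibniz color identity on $L\oplus\mathcal{L}$, with the seven mixed bracket types each split by one of the six action axioms or the Leibniz identity on $\mathcal{L}$ (and the pure $L$-term by the Leibniz identity on $L$), together with the observation that the grading forces $\varepsilon(y,z)=\varepsilon(y,c)=\varepsilon(b,z)=\varepsilon(b,c)$, is exactly that verification written out.
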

\begin{proof}
It uses axioms in Definition \ref{le}
\end{proof}

\begin{corollary}
 Let $M$ be a color bimodule over a Leibniz color algebra $L$. Then $L\times M$ is a Leibniz color algebra with the multiplication
\begin{eqnarray}
 [(x, a), (y, b)]=([x, y], [x, b]+[a, y]),
\end{eqnarray}
for all $(x, a), (y, b)\in \mathcal{H}(L\times \mathcal{L})$.
\end{corollary}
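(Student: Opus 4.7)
The plan is to derive this corollary as a direct specialization of the preceding proposition on semidirect sums. The bracket in the corollary is obtained from the semidirect sum bracket by dropping the $[a,b]$ term, which is exactly what happens if we regard $M$ as a Leibniz color algebra equipped with the trivial (zero) bracket, so that $[a,b]=0$ for all $a,b\in \mathcal{H}(M)$.

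First I would observe, as in the remark following Definition~\ref{le}, that when the ``algebra'' $\mathcal{L}$ has no bracket (or equivalently, the bracket on it is identically zero), axioms (\ref{la1})--(\ref{la3}) of a color action become trivial, since both sides involve a bracket of two elements of $\mathcal{L}$ and therefore vanish. The surviving axioms (\ref{la4})--(\ref{la6}) are precisely the defining identities of a color bimodule $M$ over the Leibniz color algebra $L$. Thus a color bimodule $M$ over $L$ is the same data as a color action of the trivial Leibniz color algebra $(M, 0)$ on $L$.

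Next I would apply the preceding proposition to the pair $(L, M)$ with this trivial Leibniz structure on $M$. The resulting semidirect sum bracket reads
\begin{eqnarray*}
[(x,a),(y,b)] = ([x,y],\, [a,b] + [x,b] + [a,y]) = ([x,y],\, [x,b] + [a,y]),
\end{eqnarray*}
since $[a,b]=0$ in $M$. This is exactly the bracket in the statement of the corollary, and the proposition guarantees that it endows $L\times M$ with the structure of a Leibniz color algebra.

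I do not anticipate any genuine obstacle beyond bookkeeping: the hard work has already been done in verifying the semidirect sum proposition, and the only thing to check is that the trivial bracket on $M$ indeed satisfies the Leibniz color identity (which is immediate) and that the bimodule axioms line up with (\ref{la4})--(\ref{la6}). Once that matching is in place, the conclusion is immediate from the preceding proposition.
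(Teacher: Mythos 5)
Your proposal is correct and matches the paper's intended derivation: the paper states this as an immediate consequence of the preceding semidirect sum proposition, with the remark after Definition~\ref{le} already noting that when the second factor carries no (equivalently, the zero) bracket, axioms (\ref{la1})--(\ref{la3}) become vacuous and (\ref{la4})--(\ref{la6}) reduce to the bimodule axioms. Your observation that the $[a,b]$ term then drops from the semidirect sum bracket is exactly the specialization the paper has in mind.
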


\subsection{Associative color trialgebras}
We now introduce  color trialgebras and establish their relationship with ternary Leibniz color algebras.
\begin{definition}
 An associative color trialgebra is a $G$-graded vector space $A$ equipped with a bicharacter $\varepsilon : G\otimes G\rightarrow \mathbb{K}^*$
and three even binary associative operations $\dashv, \perp, \vdash :
A\otimes A\rightarrow A$ (called left, middle and right respectively), satisfying the following relations :
\begin{eqnarray}
 (x\dashv y)\dashv z&=&x\dashv(y\vdash z)=x\dashv(y\perp z)\\
(x\vdash y)\dashv z&=&x\vdash(y\dashv z)\\
(x\dashv y)\vdash z&=&x\vdash(y\vdash z)=(x\perp y)\vdash z\\
(x\perp y)\dashv z&=&x\perp(y\dashv z)\\
(x\dashv y)\perp z&=&x\perp(y\vdash z)\\
(x\vdash y)\perp z&=&x\vdash(y\perp z)
\end{eqnarray}
for  all $x, y, z\in \mathcal{H}(A)$.
\end{definition}
\begin{example}
 Any associative color algebra $(A, \cdot, \varepsilon)$ is a color trialgebra with $\cdot=\dashv=\perp=\vdash$.
\end{example}
\begin{example}
 Any associative color dialgebra is a color trialgebra with trivial middle product.
\end{example}

\begin{example}
 If $(A, \dashv, \perp, \vdash, \varepsilon)$ is a color trialgebra, then so is $(A, \dashv', \perp', \vdash', \varepsilon)$, where
$$x\dashv' y:= y\vdash x, \quad x\perp' y:= y\perp x, \quad x\vdash' y:= y\dashv x.$$
\end{example}
The following proposition connects color trialgebras to Leibniz-Poisson color algebras.
It will gives a construction of  ternary Leibniz-Poisson color
algebras form  Leibniz-Poisson color algebras.
\begin{proposition}\label{tria}
 Let $(A, \dashv, \perp, \vdash, \varepsilon)$ be a color trialgebra. Then $(A, \cdot, [-, -], \varepsilon)$ is a
 Leibniz-Poisson color algebra with respect to the operations
$$ x\cdot y:= x\perp y \quad [x, y]=x\dashv y-\varepsilon(x, y)x\vdash y, $$
for  all $x, y, z\in \mathcal{H}(A)$.
\end{proposition}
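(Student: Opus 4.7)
The plan is to verify, one by one, the three axioms of a Leibniz-Poisson color algebra for the operations $x\cdot y := x\perp y$ and $[x,y]:= x\dashv y - \varepsilon(x,y)\,x\vdash y$: associativity of $\cdot$, the color Leibniz identity for $[-,-]$, and the color Poisson compatibility between $[-,-]$ and $\cdot$. Throughout, I would work with homogeneous $x,y,z$ so that the bicharacter factors are unambiguous.

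First, associativity of $\cdot$ is immediate because $\cdot=\perp$ and $\perp$ is, by assumption, an associative binary operation of the color trialgebra $(A,\dashv,\perp,\vdash,\varepsilon)$. No trialgebra relation is needed beyond the definition.

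Second, to establish the color Leibniz identity $[[x,y],z]=[x,[y,z]]+\varepsilon(y,z)[[x,z],y]$, I would expand $[[x,y],z]$ via bilinearity into four terms of the shape $(x*y)*'z$ with $*,*'\in\{\dashv,\vdash\}$, carrying the correct $\varepsilon$ prefactors coming from the two instances of the bracket. Using the trialgebra axioms
\begin{eqnarray*}
(x\dashv y)\dashv z &=& x\dashv(y\vdash z),\qquad (x\vdash y)\dashv z \;=\; x\vdash(y\dashv z),\\
(x\dashv y)\vdash z &=& x\vdash(y\vdash z),
\end{eqnarray*}
each of the four terms can be rewritten so that the outer factor acts on a suitable inner bracket. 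The same expansion is carried out for $[x,[y,z]]$ and $\varepsilon(y,z)[[x,z],y]$, and the goal is to check that the eight terms on the right-hand side match the four on the left after the bicharacter identities $\varepsilon(x,y+z)=\varepsilon(x,y)\varepsilon(x,z)$ and $\varepsilon(a,b)\varepsilon(b,a)=1$ are applied. The cancellations are driven by the fact that the mixed forms $x\dashv(y\vdash z)$ and $x\vdash(y\dashv z)$ are produced in pairs on both sides with matching color coefficients.

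Third, for the color Poisson compatibility, which I expect to take the form $[x\cdot y,z]=x\cdot[y,z]+\varepsilon(y,z)[x,z]\cdot y$ (and its mirror for $[z,x\cdot y]$), I would again expand both sides and apply the mixed axioms
\begin{eqnarray*}
(x\perp y)\dashv z &=& x\perp(y\dashv z),\qquad (x\vdash y)\perp z \;=\; x\vdash(y\perp z),\\
(x\dashv y)\perp z &=& x\perp(y\vdash z),
\end{eqnarray*}
to rewrite each term so that $\perp$ is the outer operation. Matching terms then gives the identity.

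The main obstacle is not the algebra itself, which is a direct application of the six trialgebra axioms, but rather the bookkeeping of bicharacter factors: one must be careful that every reassociation is performed within homogeneous components of fixed degree, and that every use of bilinearity of $[-,-]$ brings out the correct $\varepsilon(x,y)$ factor. Once the statement of the color Poisson axiom is fixed in the exact form used in the earlier section, the verification reduces to a routine term-by-term matching, with the color trialgebra axioms providing every needed rewrite.
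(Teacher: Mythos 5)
The paper states Proposition \ref{tria} without any proof, so there is nothing to compare your outline against; judged on its own, your plan contains a step that fails. You take the bracket exactly as printed, $[x,y]=x\dashv y-\varepsilon(x,y)\,x\vdash y$ (this is visible in your expansion of $[[x,y],z]$ into ``four terms of the shape $(x\ast y)\ast' z$''), and with that bracket the color Leibniz identity (\ref{cpa}) is simply false, so the ``routine term-by-term matching'' you promise cannot be completed. Concretely, take the trivial grading $\varepsilon\equiv 1$: the trialgebra axioms rewrite every term of $[[x,y],z]$ and of $[x,[y,z]]$ into the form $x\ast(y\ast' z)$ with the letters in the order $x,y,z$, whereas $[[x,z],y]$ expands into $x\dashv(z\dashv y)-x\vdash(z\dashv y)-x\vdash(z\vdash y)+x\vdash(z\vdash y)$, whose surviving monomials carry the word $xzy$; no trialgebra axiom permutes letters, so in a trialgebra where these monomials are linearly independent (e.g.\ the free trialgebra of Loday--Ronco \cite{JL2}) the identity would force $[[x,z],y]=0$, which is false. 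The same defect kills the compatibility (\ref{comp}): the left side produces $(x\perp y)\vdash z=x\vdash(y\vdash z)$, which has no counterpart among the right-hand terms $x\perp(y\dashv z)$, $x\perp(y\vdash z)$, $(x\dashv z)\perp y$, $(x\vdash z)\perp y$.

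The statement is surely a misprint for $[x,y]=x\dashv y-\varepsilon(x,y)\,y\vdash x$, the color version of Loday's dialgebra bracket; compare Example \ref{laa1}, where the paper does write $y\vdash x$. A blind proof should have detected this, precisely by attempting the matching you describe and watching it fail. With the corrected bracket your strategy goes through essentially as outlined: associativity of $\cdot=\perp$ is assumed; the Leibniz identity (\ref{cpa}) follows from $(x\dashv y)\dashv z=x\dashv(y\dashv z)=x\dashv(y\vdash z)$, $(y\vdash x)\dashv z=y\vdash(x\dashv z)$ and $z\vdash(y\vdash x)=(z\dashv y)\vdash x=(z\vdash y)\vdash x$; and the compatibility follows from the three mixed axioms $(x\perp y)\dashv z=x\perp(y\dashv z)$, $(x\dashv y)\perp z=x\perp(y\vdash z)$, $(x\vdash y)\perp z=x\vdash(y\perp z)$, which you correctly single out. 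Two small further points: the expansion of $[[x,y],z]$ with the corrected bracket yields terms such as $z\vdash(x\dashv y)$, not only shapes $(x\ast y)\ast' z$, so your bookkeeping description must change accordingly; and the paper's definition of a Leibniz--Poisson color algebra requires only the single identity (\ref{comp}), so no ``mirror'' identity for $[z,x\cdot y]$ needs to be checked.
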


 We end this subsection by establishing a connection between trialgebras and $-1$-tridendriform algebras (with trivial grading).
\begin{theorem}
 Let $(A, \dashv, \perp, \vdash)$ be an associative trialgebra. Then $(A, \dashv, \vdash, \perp)$ is a $-1$-tridendriform algebra.
\end{theorem}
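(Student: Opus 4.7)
The plan is a direct verification of the seven defining axioms of a $-1$-tridendriform algebra for the operations $(\dashv,\vdash,\perp)$. Recall that a $-1$-tridendriform algebra with operations $(\prec,\succ,\cdot)$ satisfies seven relations, the first and third of which involve the ``total'' product $x\ast y := x\prec y+x\succ y - x\cdot y$ (the sign is what distinguishes weight $-1$ from the classical tridendriform case); the remaining five are simple associativity-type rewriting rules. So I would identify $\prec\leftrightarrow\dashv$, $\succ\leftrightarrow\vdash$ and $\cdot\leftrightarrow\perp$, list the seven desired identities, and match each against one line of the trialgebra definition.

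First I would dispatch the five ``easy'' axioms that require no cancellation: the identity $(x\vdash y)\dashv z = x\vdash(y\dashv z)$ is literally one of the trialgebra axioms; similarly $(x\vdash y)\perp z = x\vdash(y\perp z)$, $(x\dashv y)\perp z = x\perp(y\vdash z)$ and $(x\perp y)\dashv z = x\perp(y\dashv z)$ are each a single line of the trialgebra definition. The remaining ``$\perp\perp$''-axiom $(x\perp y)\perp z = x\perp(y\perp z)$ is the postulated associativity of $\perp$.

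The heart of the argument is the two weighted identities. For the first, I must show
\[
(x\dashv y)\dashv z \;=\; x\dashv\bigl(y\dashv z+y\vdash z - y\perp z\bigr).
\]
By bilinearity of $\dashv$, the right hand side expands as $x\dashv(y\dashv z)+x\dashv(y\vdash z)-x\dashv(y\perp z)$. The first summand equals $(x\dashv y)\dashv z$ by associativity of $\dashv$, while the chain $(x\dashv y)\dashv z = x\dashv(y\vdash z)=x\dashv(y\perp z)$ coming from the trialgebra axiom forces the last two summands to cancel, leaving exactly $(x\dashv y)\dashv z$. The dual argument handles the third tridendriform axiom
\[
x\vdash(y\vdash z) \;=\; (x\dashv y+x\vdash y-x\perp y)\vdash z,
\]
using the second ``triple equality'' axiom $(x\dashv y)\vdash z = x\vdash(y\vdash z)=(x\perp y)\vdash z$ together with the associativity of $\vdash$ to make the signs match.

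The only mild obstacle is bookkeeping: one must be careful that the weight $-1$ (rather than $+1$) is exactly what is needed for the cancellation of two copies of $(x\dashv y)\dashv z$ against a third, so if the tridendriform conventions in the intended reference flip the sign of $\perp$ in $\ast$ one would have to adjust accordingly. No color signs appear in this proof because all operations are even and no two homogeneous elements are transposed, so the bicharacter $\varepsilon$ plays no role and the argument is the same as in the ungraded setting.
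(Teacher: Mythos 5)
Your verification is correct and is exactly what the paper intends: its own ``proof'' is the one-line remark that the claim follows from the two definitions, i.e.\ the same axiom-by-axiom matching you carry out. Your expansion of the two weighted identities, where the triple equalities $(x\dashv y)\dashv z = x\dashv(y\vdash z)=x\dashv(y\perp z)$ and $(x\dashv y)\vdash z = x\vdash(y\vdash z)=(x\perp y)\vdash z$ produce the cancellation that forces the weight to be $-1$, correctly supplies the details the paper omits.
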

\begin{proof}
 The proof comes from both definitions.
\end{proof}

\begin{corollary}\label{ta}
Let $(A, \dashv, \perp, \vdash)$ be an associative trialgebra. Then $A$ is an associative algebra with respect to the multiplication
$\ast : A\otimes A\rightarrow A$ :
$$x\ast y=x\dashv y+x\vdash y-x\perp y$$
for any $x, y\in A$.
\end{corollary}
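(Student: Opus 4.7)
The plan is to verify associativity of $\ast$ by direct expansion, using all ten identities available for a trialgebra (the associativity of each of $\dashv$, $\perp$, $\vdash$ plus the six mixed relations).

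First I would expand $(x\ast y)\ast z$ and $x\ast(y\ast z)$ by bilinearity. Each side becomes a signed sum of nine triple products of the form $(x\circ_1 y)\circ_2 z$ or $x\circ_1(y\circ_2 z)$, with $\circ_1,\circ_2\in\{\dashv,\perp,\vdash\}$, and signs $+,+,-,+,+,-,-,-,+$ determined by the definition $x\ast y=x\dashv y+x\vdash y-x\perp y$. To keep the bookkeeping manageable, I would label the left-associated terms $T_{\circ_1\circ_2}$ and the right-associated terms $S_{\circ_1\circ_2}$.

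Next I would rewrite every $T_{\circ_1\circ_2}$ as some $S_{\circ'_1\circ'_2}$ using exactly one axiom per term: the three diagonal terms $T_{\dashv\dashv},T_{\perp\perp},T_{\vdash\vdash}$ collapse to $S_{\dashv\dashv},S_{\perp\perp},S_{\vdash\vdash}$ by associativity of each individual operation, while the six off-diagonal $T$'s are handled by the six trialgebra relations. After the substitution, four terms match their RHS counterparts immediately, and two pairs of RHS terms coincide after applying the duplicated identity $T_{\dashv\vdash}=S_{\vdash\vdash}=T_{\perp\vdash}$ (which allows the $+S_{\vdash\vdash}+S_{\vdash\vdash}-S_{\vdash\vdash}$ on the LHS to simplify).

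The only residual discrepancy between LHS and RHS is a pair of terms $-S_{\dashv\vdash}+S_{\dashv\perp}$, which vanishes by the first trialgebra axiom $x\dashv(y\vdash z)=x\dashv(y\perp z)$. The proof thus reduces to the combinatorial matching of eighteen triple products. The main obstacle is not conceptual but notational: one has to track the signs and the correct axiom used at each step, and in particular exploit the ``triple equality'' axioms $(x\dashv y)\dashv z=x\dashv(y\vdash z)=x\dashv(y\perp z)$ and $(x\dashv y)\vdash z=x\vdash(y\vdash z)=(x\perp y)\vdash z$ twice each rather than once, since those are precisely the redundancies that kill the extra $\perp$-cross terms introduced by the sign $-$ in the definition of $\ast$.
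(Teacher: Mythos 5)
Your direct verification is correct and its outline is complete: expanding $(x\ast y)\ast z$ and $x\ast(y\ast z)$ into nine signed triple products each, the three left-parenthesized terms $(x\dashv y)\vdash z$, $(x\vdash y)\vdash z$ and $-(x\perp y)\vdash z$ collapse to $x\vdash(y\vdash z)$ with net coefficient $+1$ via the triple-equality axiom $(x\dashv y)\vdash z=x\vdash(y\vdash z)=(x\perp y)\vdash z$; the remaining six left-parenthesized terms match right-parenthesized terms one-for-one using the associativity of $\dashv$, $\perp$, $\vdash$ and the mixed relations; and the leftover pair $x\dashv(y\vdash z)-x\dashv(y\perp z)$ on the right-hand side vanishes by the first axiom, exactly as you say. (Your count of ``four'' immediate matches should be six, but this bookkeeping slip does not affect the argument.) The paper takes a genuinely different route: it does not verify associativity directly, but first asserts (with no computation) that $(A,\dashv,\vdash,\perp)$ is a $-1$-tridendriform algebra and then obtains the corollary from the standard fact that in a $\lambda$-tridendriform algebra the sum of the two outer operations plus $\lambda$ times the middle one is associative. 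The two arguments are ultimately the same cancellation in different clothing, but yours has the advantage of being self-contained and of making explicit exactly which trialgebra axioms are needed and that the two three-term axioms must each be used twice, whereas the paper's version is shorter only because it delegates the entire verification to an external result about tridendriform algebras that it does not prove.
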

\begin{corollary}
 Let $(A, \dashv, \perp, \vdash)$ be an associative trialgebra. Then $(A, \ast, [-, -])$ is a  Leibniz-Poisson algebra with 
$$x\ast y=x\dashv y+x\vdash y-x\perp y\quad{and}\quad [x, y]=x\ast y-y\ast x$$
for any $x, y\in A$.
\end{corollary}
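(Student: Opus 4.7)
The plan is to view this corollary as an immediate consequence of Corollary \ref{ta} together with the classical fact that any associative algebra carries a Leibniz--Poisson structure via its commutator bracket.

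First, I would apply Corollary \ref{ta} directly to conclude that $(A, \ast)$ is an associative algebra. This establishes the associative half of the Leibniz--Poisson structure and reduces the problem to showing that the commutator bracket on $(A, \ast)$ is a Leibniz bracket satisfying the Poisson compatibility.

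Next, I would observe that for any associative algebra $(A, \ast)$, the commutator $[x,y] := x\ast y - y\ast x$ is skew-symmetric by construction, and it satisfies the Jacobi identity by the usual six-term cancellation that only uses associativity of $\ast$. Since the Leibniz identity reduces to Jacobi in the skew-symmetric setting, $(A, [-,-])$ is in particular a Leibniz algebra.

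Finally, I would verify the Poisson derivation property. A direct expansion gives
\begin{equation*}
[x,y]\ast z + y\ast [x,z] = (x\ast y - y\ast x)\ast z + y\ast(x\ast z - z\ast x) = x\ast y\ast z - y\ast z\ast x = [x, y\ast z],
\end{equation*}
where the middle equality uses associativity to cancel $y\ast x\ast z$; the symmetric identity $[x\ast y, z] = x\ast [y,z] + [x,z]\ast y$ is analogous. Since the entire construction only draws on the output of Corollary \ref{ta}, nothing about the trialgebra operations $\dashv, \perp, \vdash$ beyond what that corollary already encapsulates is needed. The only genuine point to watch is confirming that the paper's notion of Leibniz--Poisson algebra matches the standard one (associative product, Leibniz bracket, bracket acting as a biderivation), which is precisely the form implicit in the ungraded specialization of Proposition \ref{tria}.
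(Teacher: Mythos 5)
Your proposal is correct and follows exactly the route the paper intends: the corollary is stated immediately after Corollary \ref{ta} with no separate proof, the point being that once $(A,\ast)$ is known to be associative, the commutator bracket automatically yields a (skew-symmetric, hence Lie and in particular Leibniz) bracket satisfying the compatibility $[x\ast y,z]=x\ast[y,z]+[x,z]\ast y$, which is the ungraded form of the paper's right Leibniz identity (\ref{comp}). Your computation and the reduction to Corollary \ref{ta} match this in substance.
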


The below corollaries are based on some results of \cite{MD} and the following remark.
\begin{remark}
 If $R$ is a Rota-Baxter operator of weight $\lambda\in\mathbb{K}$ on the trialgebra $(A, \dashv, \perp, \vdash)$, it is also a Rota-Baxter 
operator of weight $\lambda\in\mathbb{K}$ on the associative algebra $(A, \ast)$ of Corollary \ref{ta}.
\end{remark}

\begin{corollary}
 Let $(A, \dashv, \perp, \vdash, R)$ be a Rota-Baxter trialgebra of weight $0$. Then $(A, \star)$ is a left-symmetric algebra with 
$$x\star y=R(x)\ast y-y\ast R(x)\quad{and}\quad  x\ast y=x\dashv y+x\vdash y-x\perp y$$
for all $x, y\in A$.
\end{corollary}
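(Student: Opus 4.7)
The plan is to reduce the corollary to the classical fact that a weight-$0$ Rota-Baxter operator on an associative algebra induces a left-symmetric (pre-Lie) structure via a twisted product; both preparatory ingredients are already at hand in the text above.

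First I would invoke Corollary~\ref{ta} to record that $(A, \ast)$ is an associative algebra with $x \ast y = x \dashv y + x \vdash y - x \perp y$, and then the preceding Remark, which asserts that $R$ is still a Rota-Baxter operator of weight $0$ relative to $\ast$:
\begin{equation*}
R(x) \ast R(y) = R\bigl(R(x) \ast y + x \ast R(y)\bigr)
\end{equation*}
for all $x, y \in A$. After this reduction, the only remaining task is the standard statement: on an associative algebra $(A, \ast)$ equipped with a Rota-Baxter operator $R$ of weight $0$, the bilinear operation $x \star y := R(x) \ast y - y \ast R(x)$ is left-symmetric, meaning that the associator $(x, y, z)_\star := (x \star y) \star z - x \star (y \star z)$ is symmetric in $x$ and $y$.

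My preferred route is via dendriform algebras. Set $x \succ y := R(x) \ast y$ and $x \prec y := x \ast R(y)$; the three dendriform axioms for $(A, \prec, \succ)$ are immediate consequences of associativity of $\ast$ together with the weight-$0$ Rota-Baxter identity above. One then appeals to the classical passage from dendriform to pre-Lie algebras: for any dendriform algebra, $x \star y := x \succ y - y \prec x$ defines a left-symmetric product, and in our situation this is exactly $R(x) \ast y - y \ast R(x)$, the bracket claimed by the corollary.

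If one prefers a direct argument, I would expand $(x \star y) \star z$ and $x \star (y \star z)$, replace each factor of the form $R(u) \ast R(v)$ using the Rota-Baxter identity, and then collect terms with the help of associativity of $\ast$; the residue is manifestly invariant under the swap $x \leftrightarrow y$. The only real obstacle is the length of this bookkeeping, and that is precisely why the dendriform route is attractive: it isolates the Rota-Baxter computation into the brief verification of the three dendriform axioms and leaves only the well-known dendriform-to-pre-Lie principle (essentially the result of \cite{MD} that the corollaries are stated to be based on) to cite.
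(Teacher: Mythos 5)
Your proposal is correct and follows essentially the route the paper intends: the paper offers no written proof, only the indication that the corollary rests on Corollary~\ref{ta}, the remark that $R$ remains a weight-$0$ Rota--Baxter operator on $(A,\ast)$, and the known result of \cite{MD} that such an operator induces a left-symmetric product $x\star y=R(x)\ast y-y\ast R(x)$. Your dendriform detour is just a clean way of verifying that last classical step, so the two arguments coincide in substance.
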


\begin{corollary}
Let $(A, \dashv, \perp, \vdash, R)$ be a Rota-Baxter trialgebra of weight $-1$. Then 
$(A, \star)$ is an associative algebra with 
$$x\star y=R(x)\ast y-y\ast R(x)-x\ast y\quad{and}\quad  x\ast y=x\dashv y+x\vdash y-x\perp y.$$ 
\end{corollary}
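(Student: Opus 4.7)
My plan is to reduce the statement to a purely associative-algebra fact and then invoke the corresponding weight $-1$ Rota--Baxter construction from \cite{MD}. First, by Corollary \ref{ta}, the operation
$$x\ast y=x\dashv y+x\vdash y-x\perp y$$
turns $A$ into an associative algebra. Next, by the remark preceding this corollary, a Rota--Baxter operator $R$ of weight $\lambda$ on the trialgebra $(A,\dashv,\perp,\vdash)$ remains a Rota--Baxter operator of weight $\lambda$ on the associative algebra $(A,\ast)$. Specializing to $\lambda=-1$, we obtain a Rota--Baxter associative algebra $(A,\ast,R)$ of weight $-1$, which is exactly the input needed for the classical construction.

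The core task is then to verify that on any weight $-1$ Rota--Baxter associative algebra $(A,\ast,R)$, the bilinear map
$$x\star y=R(x)\ast y-y\ast R(x)-x\ast y$$
is associative. I would invoke this directly from \cite{MD}, parallel to the way the previous corollary pulled the left-symmetric statement in weight $0$ from the same reference. If a self-contained verification is desired, one expands $(x\star y)\star z$ and $x\star(y\star z)$ into monomials in $R(x),R(y),R(z),x,y,z$ connected by $\ast$, replaces every product $R(a)\ast R(b)$ by $R\bigl(R(a)\ast b+a\ast R(b)-a\ast b\bigr)$ using the weight $-1$ Rota--Baxter identity, and then uses the associativity of $\ast$ to match the resulting twelve terms on each side.

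The main obstacle in a hands-on proof is purely combinatorial bookkeeping: the two sides each produce a dozen mixed terms involving $R$ applied to various products, and the weight $-1$ correction $-x\ast y$ in the definition of $\star$ contributes cross-terms that only cancel after systematically applying the Rota--Baxter relation in both orders. Once the substitutions are carried out, associativity of $\ast$ finishes the matching, and no further ingredient is needed. Because the trialgebra structure has already been completely absorbed into $\ast$ by Corollary \ref{ta}, the operations $\dashv,\perp,\vdash$ do not need to be re-examined individually, which is why the argument collapses to the associative Rota--Baxter statement and no new identity from the trialgebra axioms is required.
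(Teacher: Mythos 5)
Your reduction is exactly the one the paper intends: Corollary \ref{ta} makes $(A,\ast)$ associative, and the preceding remark transports $R$ to a Rota--Baxter operator of weight $-1$ on $(A,\ast)$, so everything hinges on the purely associative claim that $x\star y=R(x)\ast y-y\ast R(x)-x\ast y$ is associative. That is precisely where the argument breaks: the term-matching you describe does not close. Writing $a=R(x)$, $b=R(y)$ and eliminating $R(a\ast y)$ via the weight $-1$ identity, the associator collapses to
\begin{eqnarray}
(x\star y)\star z-x\star(y\star z)&=&z\ast P-P\ast z-z\ast(a\ast b+b\ast a)+a\ast z\ast b+b\ast z\ast a\nonumber\\
&&+\,y\ast(a\ast z-z\ast a)+x\ast(b\ast z-z\ast b),\qquad P=R(x\ast b)+R(y\ast a),\nonumber
\end{eqnarray}
which is visibly symmetric under $x\leftrightarrow y$ but is not zero. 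Concretely, take $A=M_2(\mathbb{K})$ with $\dashv=\perp=\vdash$ the matrix product (so $\ast$ is the matrix product) and $R$ the projection onto upper triangular matrices along the strictly lower triangular ones; this is a Rota--Baxter operator of weight $-1$. For $x=e_{21}$ and $y=z=e_{12}$ one gets $x\star y=-e_{22}$ and $(x\star y)\star z=-(e_{22}\star e_{12})=e_{12}$, while $y\star z=e_{12}\star e_{12}=0$, so $x\star(y\star z)=0$. Hence $\star$ is not associative, and no amount of applying the Rota--Baxter relation will make your twelve terms cancel.

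What the computation does deliver is that the associator is symmetric in its first two arguments, i.e.\ $(A,\star)$ is a left-symmetric (pre-Lie) algebra --- the exact analogue of the weight $0$ corollary immediately above, and the form in which the weight $-1$ statement occurs in the Rota--Baxter literature you would be citing. So the gap is not in your bookkeeping strategy but in the target statement itself: the corollary should conclude ``left-symmetric algebra'' rather than ``associative algebra''. Since the paper offers no proof beyond the citation of \cite{MD} and the remark, your write-up reproduces its (unchecked) final step; before invoking the reference you would need to confirm that it actually asserts associativity rather than left-symmetry, and the counterexample above shows it cannot.
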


Now, let us recall the definition of Leibniz-Poisson color algebras.

\begin{definition} 
 A  Leibniz-Poisson color algebra is a $G$-graded vector space $P$ together with two even bilinear maps 
$[-, -] : P\otimes P\rightarrow P$ and $\cdot : P\otimes P\rightarrow P$ and a bicharacter $\varepsilon : G\otimes G\rightarrow \mathbb{K}^*$
  such that
\begin{enumerate}
\item [1)] $(P, \cdot, \varepsilon)$ is an associative color algebra,
 \item [2)]  $(P, [-, -], \varepsilon)$ is a Leibniz color algebra,
\item [3)] and the following {\it right Leibniz} identity :
 \begin{eqnarray}
  [x\cdot y, z]=x\cdot [y, z]+\varepsilon(y, z) [x, z]\cdot y \label{comp}
 \end{eqnarray}
holds, for all $x, y, z\in \mathcal{H}(P)$.
\end{enumerate}
\end{definition}

\begin{remark}
1) When the color associative product $\cdot$ is $\varepsilon$-commutative i.e. $x\cdot y=\varepsilon(x, y)y\cdot x$, 
then $(P, \cdot, [-, -], \varepsilon)$ is said to be
a commutative Leibniz-Poisson color algebra.\\
2) A non-commutative Leibniz-Poisson color algebra  in which the associative product is non-commutative and the bracket $[-, -]$ is  
$\varepsilon$-skew symmetric, is called a non-commutative Poisson color algebra  \cite{BD}.\\
3) Whenever the color associative product $\cdot$ is $\varepsilon$-commutative and the bracket $[-, -]$ is $\varepsilon$-skew-symmetric,
then $(P, \cdot, [-, -], \varepsilon)$ is named a commutative Poisson color algebra.
\end{remark}

\begin{example}\label{laa1}(\cite{BD})
Let $(D, \dashv, \vdash, \varepsilon)$ be an associative color dialgebra. Then $(D, \dashv, [-, -], \varepsilon)$ is a non-commutative
 Leibniz-Poisson color algebra with the bracket
\begin{eqnarray}
 [x, y]:=x\dashv y-\varepsilon(x, y) y\vdash x,\nonumber
\end{eqnarray}
for all $x, y\in \mathcal{H}(D)$.
\end{example}

\section{Ternary color algebras}
This section is devoted to the construction of some structures of ternary  color algebras.
\subsection{Ternary Leibniz color algebras}
\begin{definition}
 A ternary Leibniz color algebra is a $G$-graded vector space $A$ over a field $\mathbb{K}$ equipped with a bicharacter 
$\varepsilon : G\otimes G\rightarrow \mathbb{K}^*$ and an even trilinear operation $[-, -, -] : A\otimes A\otimes A\rightarrow A$ (i.e.
$[x, y, z]\subseteq  A_{x+y+z}$ whenever  $x, y, z\in \mathcal{H}(A)$) satisfying the following :
\begin{eqnarray}
 [[x, y, z], t, u]=[x, y, [z, t, u]]+\varepsilon(z, t+u)[x, [y, t, u], z]+\varepsilon(y+z, t+u)[[x, t, u], y, z]\label{lci}
\end{eqnarray}
for any $x, y, z, t, u\in \mathcal{H}(A)$.\\
If the trilinear map $[-, -, -]$ is $\varepsilon$-skew-symmetric for any pair of variables, then $(A, [-, -, -], \varepsilon)$ is 
said to be a ternary Lie color algebra \cite{TZ}.
\end{definition}

It is proved in (\cite{JM}, Proposition 3.2) that any Leibniz algebra is also a Leibniz $n$-algebra. We prove the analog for graded case and $n=3$.
That is one can get ternary Leibniz color algebras from Leibniz color algebras.
\begin{theorem}\label{ll3}
Let $(L, [-, -], \varepsilon)$ be a Leibniz color algebra. Then $L$ is a ternary Leibniz color algebra with respect to the bracket
$$[x, y, z]:=[x, [y, z]],$$
for  any $x, y, z\in \mathcal{H}(L)$.
\end{theorem}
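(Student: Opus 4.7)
The plan is to translate the ternary Leibniz identity into an identity involving only the binary bracket, using the defining relation $[a,b,c] = [a,[b,c]]$, and then verify it by applying the binary Leibniz color identity (\ref{cpa}) twice. Since the bracket is even and homogeneous, $[y,z]$ has degree $y+z$ and $[t,u]$ has degree $t+u$, so all bicharacter evaluations reduce directly to scalars of the form $\varepsilon(u+v, s+t)$, which will match the coefficients in (\ref{lci}) on the nose.

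First I would rewrite the left-hand side as $[[x,y,z],t,u] = [[x,[y,z]], [t,u]]$ and apply the Leibniz color identity to the outer bracket, treating $x$, $[y,z]$, $[t,u]$ as the three entries. This produces
\begin{eqnarray*}
[[x,[y,z]],[t,u]] &=& [x,[[y,z],[t,u]]] + \varepsilon([y,z],[t,u])\,[[x,[t,u]],[y,z]] \\
&=& [x,[[y,z],[t,u]]] + \varepsilon(y+z,t+u)\,[[x,[t,u]],[y,z]].
\end{eqnarray*}
The second summand, rewritten via the definition of the ternary bracket, is precisely $\varepsilon(y+z,t+u)\,[[x,t,u],y,z]$, which matches the third term on the right-hand side of (\ref{lci}).

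Next I would peel the inner binary bracket $[[y,z],[t,u]]$ using the Leibniz identity again, this time with entries $y$, $z$, $[t,u]$:
\begin{eqnarray*}
[[y,z],[t,u]] = [y,[z,[t,u]]] + \varepsilon(z,t+u)\,[[y,[t,u]],z].
\end{eqnarray*}
Substituting this inside $[x, -]$ and using bilinearity of the bracket in its second argument gives
\begin{eqnarray*}
[x,[[y,z],[t,u]]] = [x,[y,[z,[t,u]]]] + \varepsilon(z,t+u)\,[x,[[y,[t,u]],z]],
\end{eqnarray*}
whose two summands are $[x,y,[z,t,u]]$ and $\varepsilon(z,t+u)\,[x,[y,t,u],z]$ respectively. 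Adding everything up recovers the right-hand side of (\ref{lci}) exactly.

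The argument is essentially bookkeeping; the only place I would be careful is the first step, where one must recognize that $\varepsilon([y,z],[t,u])$ equals $\varepsilon(y+z,t+u)$ because the bracket preserves the grading. There is no real obstacle, since both sides of (\ref{lci}), after full expansion via $[a,b,c]=[a,[b,c]]$, become the two-fold iterate of (\ref{cpa}), and the bicharacter factors line up automatically from the homogeneity of the binary bracket.
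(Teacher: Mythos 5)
Your proof is correct and follows essentially the same route as the paper: apply the Leibniz color identity (\ref{cpa}) once to $[[x,[y,z]],[t,u]]$ with entries $x$, $[y,z]$, $[t,u]$, and then once more to the inner bracket $[[y,z],[t,u]]$, with the bicharacter factors $\varepsilon(y+z,t+u)$ and $\varepsilon(z,t+u)$ arising from the evenness of the bracket exactly as you describe. No discrepancies with the paper's argument.
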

\begin{proof}
 Applying twice relation (\ref{cpa}), for any $x, y, z\in \mathcal{H}(L)$, we have
\begin{eqnarray}
 [[x, y, z], t, u]&=&[[x, [y, z]], [t, u]]\nonumber\\
&=&[x,  [[y, z], [t, u]]]+\varepsilon(y+z, t+u)[[x, [t, u]]], [y, z]]\nonumber\\
&=&[x, [y, [z, [t, u]]]]+\varepsilon(z, t+u)[x, [[y, [t, u]], z]]+\varepsilon(y+z, t+u)[[x, [t, u]]], [y, z]]\nonumber\\
&=&[x, y, [z, t, u]]+\varepsilon(z, t+u)[x, [y, t, u], z]+\varepsilon(y+z, t+u)[[x, t, u], y, z].\nonumber
\end{eqnarray}
This completes the proof.
\end{proof}

\begin{proposition}
 Let $(L, [-, -, -], \varepsilon)$ be a ternary Leibniz color algebra, $\xi\in A_0$ such that $[\xi, x, \xi]=0$, for any $x\in L$. Then,
$L$ is a Leibniz color algebra with the bracket
$$\{x, y\}=[x, y, \xi],$$
for any $x, y, z\in \mathcal{H}(L)$.
\end{proposition}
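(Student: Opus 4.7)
The plan is to verify directly the color Leibniz identity
\begin{eqnarray*}
\{\{x,y\},z\} = \{x,\{y,z\}\} + \varepsilon(y,z)\{\{x,z\},y\}
\end{eqnarray*}
by expanding both sides through the definition $\{x,y\}=[x,y,\xi]$ and invoking the ternary Leibniz color identity (\ref{lci}) exactly once.

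First I would write $\{\{x,y\},z\} = [[x,y,\xi],z,\xi]$ and apply (\ref{lci}) with the substitution $(x,y,z,t,u) \mapsto (x,y,\xi,z,\xi)$. This yields
\begin{eqnarray*}
[[x,y,\xi],z,\xi] &=& [x,y,[\xi,z,\xi]] + \varepsilon(\xi, z+\xi)\,[x,[y,z,\xi],\xi] \\
&& + \,\varepsilon(y+\xi, z+\xi)\,[[x,z,\xi],y,\xi].
\end{eqnarray*}

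Next I would use the hypothesis $\xi \in A_0$, which together with the bicharacter axioms gives $\varepsilon(\xi,\alpha)=\varepsilon(\alpha,\xi)=1$ for every $\alpha \in G$; consequently $\varepsilon(\xi,z+\xi)=1$ and $\varepsilon(y+\xi,z+\xi)=\varepsilon(y,z)$. The first term on the right-hand side vanishes because $[\xi,z,\xi]=0$ by assumption. What remains is precisely
\begin{eqnarray*}
[[x,y,\xi],z,\xi] = [x,[y,z,\xi],\xi] + \varepsilon(y,z)\,[[x,z,\xi],y,\xi],
\end{eqnarray*}
which is exactly $\{x,\{y,z\}\} + \varepsilon(y,z)\{\{x,z\},y\}$. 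The evenness of $[-,-,-]$ ensures that $\{-,-\}$ is an even bilinear map, so $(L,\{-,-\},\varepsilon)$ is a Leibniz color algebra.

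There is no real obstacle here: the argument is a single application of (\ref{lci}), and the only subtlety is bookkeeping the bicharacter arguments, where the degree-zero hypothesis on $\xi$ kills the spurious $\varepsilon$-factors and the assumption $[\xi,x,\xi]=0$ kills the term that would otherwise spoil the Leibniz identity.
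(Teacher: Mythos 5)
Your proof is correct: the single application of identity (\ref{lci}) with $(x,y,\xi,z,\xi)$, together with $\varepsilon(\xi,\cdot)=\varepsilon(\cdot,\xi)=1$ for the degree-zero element $\xi$ and the hypothesis $[\xi,z,\xi]=0$, yields exactly the Leibniz color identity for $\{-,-\}$. The paper dismisses this as ``straightforward'' and gives no computation, and your argument is precisely the intended one, with the bicharacter bookkeeping done carefully.
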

\begin{proof}
 It is straightforward.
\end{proof}

The next assertion connects color trialgebras to ternary Leibniz color trialgebras.
\begin{proposition}
 Let $(A, \dashv, \perp, \vdash, \varepsilon)$ be a color trialgebra. Then $A$ is a ternary Leibniz color algebra with respect to the bracket
\begin{eqnarray}
 [x, y, z]:=x\dashv(y\perp z-\varepsilon(y, z)z\perp y)-\varepsilon(x, y+z)(y\perp z-\varepsilon(y, z)z\perp y)\vdash x,\nonumber
\end{eqnarray}
for  all $x, y, z\in \mathcal{H}(A)$.
\end{proposition}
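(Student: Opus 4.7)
The plan is to reduce the ternary Leibniz identity for the given bracket to the binary Leibniz identity of an auxiliary bracket, combined with a Leibniz--Poisson-type compatibility between that bracket and $\perp$. First, I would introduce the abbreviations
\begin{equation*}
\omega := y\perp z - \varepsilon(y, z)\, z\perp y, \qquad \omega' := t\perp u - \varepsilon(t, u)\, u\perp t,
\end{equation*}
and the auxiliary binary bracket $\langle a, b\rangle := a\dashv b - \varepsilon(a, b)\, b\vdash a$. With this notation, the ternary bracket simply becomes $[x, y, z] = \langle x, \omega\rangle$ and $[x, t, u] = \langle x, \omega'\rangle$. Observing that the trialgebra axioms $(a\dashv b)\dashv c = a\dashv(b\vdash c)$, $(a\vdash b)\dashv c = a\vdash(b\dashv c)$, $(a\dashv b)\vdash c = a\vdash(b\vdash c)$, together with the associativity of $\dashv$ and $\vdash$, are precisely the defining relations of an associative color dialgebra, Example~\ref{laa1} endows $(A, \langle-, -\rangle, \varepsilon)$ with the structure of a Leibniz color algebra.

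Next, I would establish the Leibniz--Poisson-type compatibility
\begin{equation*}
\langle u\perp v,\, w\rangle = u\perp \langle v, w\rangle + \varepsilon(v, w)\, \langle u, w\rangle\perp v
\end{equation*}
by a direct expansion of both sides. The three mixed trialgebra axioms $(a\perp b)\dashv c = a\perp(b\dashv c)$, $(a\dashv b)\perp c = a\perp(b\vdash c)$, and $(a\vdash b)\perp c = a\vdash(b\perp c)$ reduce each side to the common form $u\perp(v\dashv w) - \varepsilon(u+v, w)(w\vdash u)\perp v$, the crucial cancellation between the two ``mixed'' cross-terms coming from $(a\dashv b)\perp c = a\perp(b\vdash c)$.

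With these ingredients in hand, I would apply the binary Leibniz identity of $\langle-, -\rangle$ to the left-hand side to obtain
\begin{equation*}
[[x, y, z], t, u] = \langle\langle x, \omega\rangle,\, \omega'\rangle = \langle x,\, \langle\omega, \omega'\rangle\rangle + \varepsilon(y+z, t+u)\, \langle\langle x, \omega'\rangle,\, \omega\rangle,
\end{equation*}
using the bicharacter identity $\varepsilon(\omega, \omega') = \varepsilon(y+z, t+u)$. The second summand is immediately $\varepsilon(y+z, t+u)\,[[x, t, u], y, z]$, matching the third term of the desired right-hand side. It then remains to verify that
\begin{equation*}
\langle\omega, \omega'\rangle = [y, [z, t, u]]_\perp + \varepsilon(z, t+u)\, [[y, t, u], z]_\perp,
\end{equation*}
where $[a, b]_\perp := a\perp b - \varepsilon(a, b)\, b\perp a$. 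This is done by splitting $\langle\omega, \omega'\rangle = \langle y\perp z,\, \omega'\rangle - \varepsilon(y, z)\,\langle z\perp y,\, \omega'\rangle$ and applying the compatibility of the second paragraph to each summand. The main obstacle throughout is the bookkeeping of $\varepsilon$-factors; the decisive bicharacter identities that allow the four regrouped terms to collapse onto the right-hand side are $\varepsilon(y, z+\omega') = \varepsilon(y, z)\varepsilon(y, \omega')$ and $\varepsilon(z, \omega')\varepsilon(\omega', z) = 1$.
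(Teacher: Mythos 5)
Your argument is correct, and it is worth recording because the paper itself discharges this proposition with the single line ``it follows from a straightforward computation,'' i.e.\ by an unstructured expansion of all terms. Your route is genuinely different in organization: you observe that $(A,\dashv,\vdash,\varepsilon)$ is an associative color dialgebra (its axioms sit inside the trialgebra axioms), so Example~\ref{laa1} already gives the Leibniz identity for $\langle a,b\rangle=a\dashv b-\varepsilon(a,b)\,b\vdash a$, and then the whole ternary identity reduces to the single compatibility $\langle u\perp v,w\rangle=u\perp\langle v,w\rangle+\varepsilon(v,w)\,\langle u,w\rangle\perp v$, which I have checked: axioms $(u\perp v)\dashv w=u\perp(v\dashv w)$ and $w\vdash(u\perp v)=(w\vdash u)\perp v$ reduce the left side to $u\perp(v\dashv w)-\varepsilon(u+v,w)(w\vdash u)\perp v$, and the cross-terms $\varepsilon(v,w)(u\dashv w)\perp v$ and $-\varepsilon(v,w)\,u\perp(w\vdash v)$ on the right indeed cancel via $(a\dashv b)\perp c=a\perp(b\vdash c)$. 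The final regrouping of $\langle\omega,\omega'\rangle$ into $[y,[z,t,u]]_\perp+\varepsilon(z,t+u)[[y,t,u],z]_\perp$ also checks out, the only delicate points being $\varepsilon(z,t+u)\varepsilon(y+t+u,z)=\varepsilon(y,z)$ and $\varepsilon(y,z+t+u)=\varepsilon(y,z)\varepsilon(y,t+u)$, both of which you flag. What your approach buys is reuse (the dialgebra case and the Leibniz--Poisson compatibility become visible as the two real inputs, and the same scheme proves Proposition~\ref{tria}); what the brute-force expansion buys is only that it avoids citing Example~\ref{laa1}. One small point to make explicit in a write-up: $\omega$ and $\omega'$ are not homogeneous elements but sums of homogeneous elements of a single degree ($y+z$, resp.\ $t+u$), so writing $\varepsilon(x,\omega)=\varepsilon(x,y+z)$ and applying the homogeneous-element identities to them requires the (routine) remark that all identities extend bilinearly within a fixed degree.
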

\begin{proof}
 It follows from a straightforward computation.
\end{proof}

\begin{corollary}
 Let $(A, \cdot, \varepsilon)$ be a non commutative associative color algebra. Then $(A, [-, -, -], \varepsilon)$ is a ternary Leibniz color algebra, where
$$[x, y, z]:=x\cdot(y\cdot z-\varepsilon(y, z)z\cdot y)-\varepsilon(x, y+z)(y\cdot z-\varepsilon(y, z)z\cdot y)\cdot x.$$
\end{corollary}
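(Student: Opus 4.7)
The plan is to deduce this corollary directly from the preceding proposition by viewing the associative color algebra $(A, \cdot, \varepsilon)$ as a particular color trialgebra. Recall the earlier example in the subsection on associative color trialgebras: every associative color algebra becomes a color trialgebra by setting $\dashv = \perp = \vdash = \cdot$. All six color trialgebra axioms are then immediate consequences of the single associativity identity for $\cdot$, so no verification is required at this step.

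Next, I would substitute this trialgebra structure into the bracket formula from the previous proposition. With $\dashv$, $\perp$, and $\vdash$ all replaced by $\cdot$, the expression
\begin{eqnarray*}
[x, y, z] &:=& x\dashv(y\perp z-\varepsilon(y, z)z\perp y)-\varepsilon(x, y+z)(y\perp z-\varepsilon(y, z)z\perp y)\vdash x
\end{eqnarray*}
collapses to
\begin{eqnarray*}
[x, y, z] &=& x\cdot(y\cdot z-\varepsilon(y, z)z\cdot y)-\varepsilon(x, y+z)(y\cdot z-\varepsilon(y, z)z\cdot y)\cdot x,
\end{eqnarray*}
which is precisely the bracket asserted in the corollary.

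Finally, since the previous proposition guarantees that this bracket endows any color trialgebra with the structure of a ternary Leibniz color algebra, the corollary follows with no further computation. There is essentially no obstacle here; the only thing to be careful about is grading, namely that the evenness and homogeneity conditions on the trilinear map $[-, -, -]$ are preserved under the specialization $\dashv = \perp = \vdash = \cdot$, and this is automatic because $\cdot$ is even and the bicharacter $\varepsilon$ is unchanged. Thus the corollary is a direct specialization of the earlier proposition.
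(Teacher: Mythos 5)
Your proposal is correct and matches the paper's intended derivation: the corollary is stated immediately after the proposition on color trialgebras precisely so that it follows by taking $\dashv=\perp=\vdash=\cdot$, using the paper's example that any associative color algebra is a color trialgebra in this way. The specialization of the bracket and the preservation of grading are exactly as you describe, so nothing further is needed.
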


\begin{remark}
Observe that this ternary Leibniz color algebra is nothing but the one constructed from the commutator of the associative product.
\end{remark}

\begin{proposition}
 Let  $(A, [-, -, -], \varepsilon)$ and $(A',  [-, -, -]', \varepsilon)$ be two  ternary Leibniz color 
 algebras. Then $A\oplus A'$ is also a ternary Leibniz color  algebra with respect to the operations :
\begin{eqnarray}
\{x\oplus x', y\oplus y', z\oplus z'\}&:=&[x, y, z]\oplus[x', y', z']'\nonumber,
\end{eqnarray}
for any $x, y, z\in \mathcal{H}(A)$ and $x', y', z'\in \mathcal{H}(A')$.
\end{proposition}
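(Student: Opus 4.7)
The plan is to verify the defining ternary Leibniz identity (\ref{lci}) for the bracket $\{-,-,-\}$ componentwise, exploiting the fact that both summands share the same grading group $G$ and the same bicharacter $\varepsilon$, and that the bracket on $A\oplus A'$ is literally the direct sum of the two brackets.

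First, I would fix the grading convention on $A\oplus A'$, namely $(A\oplus A')_a := A_a \oplus A'_a$, so that a homogeneous element $x\oplus x'$ of degree $a$ satisfies $x\in A_a$ and $x'\in A'_a$. In particular, $\varepsilon(x\oplus x', y\oplus y') = \varepsilon(x,y) = \varepsilon(x',y')$. I would then check that $\{-,-,-\}$ is a well-defined even trilinear map, i.e. $\{x\oplus x', y\oplus y', z\oplus z'\} \in (A\oplus A')_{a+b+c}$ whenever $x\oplus x', y\oplus y', z\oplus z'$ are homogeneous of degrees $a, b, c$; this is immediate since $[x,y,z]\in A_{a+b+c}$ and $[x',y',z']'\in A'_{a+b+c}$.

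The core step is to apply both sides of (\ref{lci}) to $x\oplus x'$, $y\oplus y'$, $z\oplus z'$, $t\oplus t'$, $u\oplus u'$. Because the bracket is diagonal, the left-hand side
\begin{eqnarray}
\{\{x\oplus x', y\oplus y', z\oplus z'\}, t\oplus t', u\oplus u'\}
= [[x,y,z],t,u]\oplus [[x',y',z'],t',u']'\nonumber
\end{eqnarray}
and each of the three terms on the right-hand side of (\ref{lci}) decomposes analogously as a direct sum of the corresponding term in $A$ and in $A'$. The scalar coefficients $\varepsilon(z, t+u)$ and $\varepsilon(y+z, t+u)$ depend only on the degrees, which coincide across the two summands, so they pull out of the direct sum unambiguously. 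The claim then reduces, coordinate by coordinate, to the ternary Leibniz identity on $A$ and on $A'$ respectively, both of which hold by hypothesis.

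I expect no serious obstacle: the argument is purely bookkeeping. The only point deserving a sentence of care is the consistency of the bicharacter, which is why the statement of the proposition uses a single $\varepsilon$ for both factors; this is exactly what makes the scalars factor through the direct sum cleanly.
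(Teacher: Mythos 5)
Your proposal is correct and fills in exactly the componentwise verification that the paper omits (its proof is just ``It is straightforward''): with the grading $(A\oplus A')_a=A_a\oplus A'_a$ the bracket is diagonal, the $\varepsilon$-scalars depend only on the common degrees, and identity (\ref{lci}) reduces to its validity in $A$ and in $A'$ separately. No gaps.
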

\begin{proof}
It is straightforward.
\end{proof}

\begin{proposition}
 Let $(A, [-, -, -], \varepsilon)$ be a ternary Leibniz color  algebra. Then $A\otimes A$ also has a Leibniz color algebra structure
 for the structure maps defined by
\begin{eqnarray}
{[x\otimes y, x'\otimes y']}&:=&x\otimes[y, x', y']+\varepsilon(y, x'+y')[x, x', y']\otimes y.\nonumber
\end{eqnarray}
\end{proposition}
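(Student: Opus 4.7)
The plan is to verify the defining identity (\ref{cpa}) of a Leibniz color algebra directly for the bracket on $A\otimes A$. Write $X=x\otimes y$, $Y=x'\otimes y'$ and $Z=x''\otimes y''$ with all factors homogeneous, so that $|X|=|x|+|y|$, $|Y|=|x'|+|y'|$ and $|Z|=|x''|+|y''|$, and note that the proposed bracket is even with respect to the $G$-grading. It then suffices to show
\[
[[X,Y],Z]=[X,[Y,Z]]+\varepsilon(Y,Z)\,[[X,Z],Y].
\]

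I would first expand $[[X,Y],Z]$ by two applications of the definition. The outer application produces two summands, and the inner expansion of $[X,Y]$ has already contributed two summands, so one obtains four tensor monomials. Two of them, namely $x\otimes[[y,x',y'],x'',y'']$ and $\varepsilon(y,x'+y')\varepsilon(y,x''+y'')\,[[x,x',y'],x'',y'']\otimes y$, contain a doubly nested ternary bracket. Applying the ternary Leibniz identity (\ref{lci}) in $A$ to each of these doubly nested brackets replaces them by three simpler ternary brackets each, so the full left-hand side becomes a sum of eight tensor monomials with prescribed bicharacter coefficients.

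Next I would expand $[X,[Y,Z]]$ and $\varepsilon(Y,Z)\,[[X,Z],Y]$ directly from the definition of the bracket on $A\otimes A$, obtaining four tensor monomials each, hence eight on the right-hand side. Grouping the eight terms on each side according to the pattern of which tensor slot carries an iterated ternary bracket (for instance, terms of the form $x\otimes[y,x',[y',x'',y'']]$ or $[x,x',y']\otimes[y,x'',y'']$), one sees that the eight terms pair off naturally.

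The main obstacle is bookkeeping the bicharacter coefficients. The match requires repeatedly expanding compound exponents such as $\varepsilon(y+x'+y',x''+y'')$, $\varepsilon(x'+y',x''+y'')$ and $\varepsilon(y,x'+y'+x''+y'')$ into products via the axioms $\varepsilon(a+b,c)=\varepsilon(a,c)\varepsilon(b,c)$ and $\varepsilon(a,b+c)=\varepsilon(a,b)\varepsilon(a,c)$, and then checking equality slot by slot. Once this symbolic matching of the twelve terms arising on the left against the eight on the right is carried out, the identity follows, so $(A\otimes A,[-,-],\varepsilon)$ is a Leibniz color algebra.
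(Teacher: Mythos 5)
Your proposal is correct and takes the same route as the paper, whose entire proof is the statement that the result follows from a direct computation: expanding $[[X,Y],Z]$ gives four tensor monomials, the two carrying doubly nested ternary brackets each split into three terms via the ternary Leibniz identity (\ref{lci}), and the resulting eight monomials match the eight obtained from $[X,[Y,Z]]+\varepsilon(Y,Z)[[X,Z],Y]$ once compound bicharacter exponents are expanded and skew-symmetry $\varepsilon(a,b)\varepsilon(b,a)=1$ is used on the mixed terms. (One small slip: your final sentence speaks of twelve terms on the left, but by your own earlier count there are eight; this does not affect the argument.)
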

\begin{proof}
 It follows from a direct computation.
\end{proof}

\begin{corollary}
 Let $(A, [-, -])$ be a Leibniz color algebra. Then $A\otimes A$ is also a Leibniz color algebra
with respect to the operation
\begin{eqnarray}
{[x\otimes y, x'\otimes y']}:=x\otimes[y, [x', y']]+\varepsilon(y, x'+y')[x, [x', y']]\otimes y.\nonumber
\end{eqnarray}
\end{corollary}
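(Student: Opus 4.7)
The plan is to obtain the corollary as an immediate composition of two earlier results in the paper, with no fresh verification of the Leibniz color identity needed. First I would apply Theorem \ref{ll3} to the Leibniz color algebra $(A, [-,-], \varepsilon)$, which endows $A$ with a ternary Leibniz color bracket defined by $[a, b, c] := [a, [b, c]]$ for all $a, b, c \in \mathcal{H}(A)$. This turns $(A, [-, -, -], \varepsilon)$ into a ternary Leibniz color algebra for free.

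Next I would invoke the immediately preceding proposition, which asserts that whenever $(A, [-,-,-], \varepsilon)$ is a ternary Leibniz color algebra, the tensor square $A \otimes A$ carries a Leibniz color algebra structure given by
\begin{eqnarray*}
[x \otimes y, x' \otimes y'] := x \otimes [y, x', y'] + \varepsilon(y, x' + y')\, [x, x', y'] \otimes y.
\end{eqnarray*}
Substituting the ternary bracket $[a, b, c] = [a, [b, c]]$ coming from step one into this formula, the terms $[y, x', y']$ and $[x, x', y']$ become $[y, [x', y']]$ and $[x, [x', y']]$ respectively, reproducing exactly the bracket in the statement of the corollary.

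Thus the entire content reduces to checking that the substitution is compatible with degrees (the map $(b, c) \mapsto [b, c]$ is even, so $[x', y']$ has degree $x' + y'$, which is precisely the degree used in the bicharacter $\varepsilon(y, x' + y')$), ensuring the graded sign factor remains the same after substitution. Since both Theorem \ref{ll3} and the preceding tensor-product proposition have already been established, no further computation is required.

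There is no real obstacle here: the only thing to watch is that the bicharacter argument $x' + y'$ appearing in the proposition matches the degree of the element $[x', y']$ in the original Leibniz color algebra, which is automatic from the fact that $[-,-]$ is even. The corollary is therefore a direct specialization of the two prior results chained together.
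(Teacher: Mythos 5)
Your proposal is correct and matches the paper's intent exactly: the corollary is stated immediately after the tensor-square proposition for ternary Leibniz color algebras precisely so that it follows by specializing that proposition to the ternary bracket $[a,b,c]=[a,[b,c]]$ furnished by Theorem \ref{ll3}, which is the chain of results you describe. The paper offers no separate proof for this corollary, so your two-step composition is the intended argument.
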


\begin{definition}
 An even linear map $\theta : L\rightarrow L$ is said to be an element of centroid of $L$ if, for any $x, y, z\in\mathcal{H}(L)$,
\begin{eqnarray}
 \theta[x, y, z]=[\theta(x), y, z]=[x, \theta(y), z]=[x, y, \theta(z)].
\end{eqnarray}
\end{definition}

\begin{example}
 Any homothety is an element of centroid of $L$. In deed, putting
$$\theta(x)=\lambda x,\;\; x\in L,\;\; \lambda\in\mathbb{K},$$
we have
\begin{eqnarray}
 \theta[x, y, z]=\lambda[x, y, z]
&=&[\lambda x, y, z]=[\theta(x), y, z]\nonumber\\
&=&[x, \lambda y, z]=[x, \theta(y), z]\nonumber\\
&=&[x, y, \lambda z]=[x, y, \theta(z)].\nonumber
\end{eqnarray}
\end{example}

\begin{proposition}
 Let $L$ be a ternary Leibniz color algebra and $\theta : L\rightarrow L$ an element of centroid of $L$. Then $L$ becomes a ternary Leibniz
 color algebra in each of the following cases :
\begin{itemize}
 \item [a)] $\{x, y, z\}=[\theta(x), y, z]$,
\item [b)] $\{x, y, z\}=[\theta(x), \theta(y), z]$,
\item [c)] $\{x, y, z\}=[\theta(x), \theta(y), \theta(z)]$,
\end{itemize}
 for any $x, y, z\in \mathcal{H}(L)$.
\end{proposition}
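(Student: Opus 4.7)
The plan is to verify the ternary Leibniz identity (\ref{lci}) for the new bracket $\{-,-,-\}$ in each case by repeatedly using the defining property of an element of the centroid, namely $\theta[x,y,z]=[\theta(x),y,z]=[x,\theta(y),z]=[x,y,\theta(z)]$, to push $\theta$ in and out of brackets and from slot to slot, and then invoking the original identity (\ref{lci}) for $[-,-,-]$ applied to suitably $\theta$-shifted arguments. Iterating the centroid property gives $\theta^{n}[a,b,c]=[\theta^{i}(a),\theta^{j}(b),\theta^{k}(c)]$ whenever $i+j+k=n$ with $i,j,k\geq 0$, which is the only tool needed. Since $\theta$ is even, degrees are preserved, so all $\varepsilon$-factors that appear are unchanged when $\theta$ is applied to any homogeneous element.

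For case (a), rewriting gives $\{\{x,y,z\},t,u\}=[\theta([\theta(x),y,z]),t,u]=[[\theta^{2}(x),y,z],t,u]$, and applying (\ref{lci}) with $\theta^{2}(x)$ in the first slot produces three summands. Parallel centroid manipulations show that $\{x,y,\{z,t,u\}\}=[\theta^{2}(x),y,[z,t,u]]$, $\{x,\{y,t,u\},z\}=[\theta^{2}(x),[y,t,u],z]$, and $\{\{x,t,u\},y,z\}=[[\theta^{2}(x),t,u],y,z]$, which match the three summands term by term with the correct $\varepsilon(z,t+u)$ and $\varepsilon(y+z,t+u)$ factors.

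For case (b), first rewrite $\{\{x,y,z\},t,u\}=[\theta([\theta(x),\theta(y),z]),\theta(t),u]$ and, by distributing the outer $\theta$ using the centroid property, convert this into $[[\theta(x),\theta(y),\theta(z)],\theta(t),u]$. Applying (\ref{lci}) to the quintuple $(\theta(x),\theta(y),\theta(z),\theta(t),u)$ produces three summands, and a matching computation shows each equals the corresponding $\{-,-,-\}$-summand on the right-hand side after a single centroid move (for example, $\{x,\{y,t,u\},z\}=[\theta(x),[\theta^{2}(y),\theta(t),u],z]=[\theta(x),[\theta(y),\theta(t),u],\theta(z)]$). Case (c) is the cleanest: since $\{x,y,z\}=[\theta(x),\theta(y),\theta(z)]=\theta^{3}[x,y,z]$, pulling three copies of $\theta$ out of each outer bracket reduces the identity for $\{-,-,-\}$ to (\ref{lci}) applied to the $\theta$-transformed arguments $(\theta(x),\theta(y),\theta(z),\theta(t),\theta(u))$, and evenness of $\theta$ keeps all $\varepsilon$-factors the same.

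The main obstacle is not conceptual but purely notational bookkeeping: one must track where each copy of $\theta$ sits after each centroid move and confirm that the bicharacter arguments on both sides agree. Because $\theta$ is even, $\varepsilon(\theta(a),\theta(b))=\varepsilon(a,b)$ for homogeneous $a,b$, so the $\varepsilon$-factors required by (\ref{lci}) for the $\theta$-shifted arguments coincide with those required for $\{-,-,-\}$, and all three verifications become routine.
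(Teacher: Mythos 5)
Your proposal is correct: in each case you reduce the ternary Leibniz identity for $\{-,-,-\}$ to identity (\ref{lci}) for $[-,-,-]$ by using the centroid relation to shuttle copies of $\theta$ between slots, and evenness of $\theta$ guarantees all bicharacter factors are preserved. This is exactly the ``simple computation'' the paper invokes without writing out, so your argument matches the intended proof while actually supplying the details.
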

\begin{proof}
 It follows from a simple computation.
\end{proof}

\begin{remark}
 Observe that the bracket $c)$ is nothing but the initial one whenever $\theta$ is surjective.
\end{remark}

\begin{definition}
 Given a ternary Leibniz color $L$, an even linear map $P: L\rightarrow L$ is said to be a Reynolds operator on $L$ if one has
\begin{eqnarray}
 [P(x), P(y), P(z)]=P\Big([P(x), P(y), z]+[P(x), y, P(z)]+[x, P(y), P(z)]-[P(x), P(y), P(z)]\Big), \label{ret}
\end{eqnarray}
for any $x, y, z\in\mathcal{H}(L)$.
\end{definition}

\begin{theorem}
 Let $P: L\rightarrow L$ be a Reynolds operator on a ternary Leibniz color algebra $L$. Then, $L$ is also a ternary Leibniz color for the 
product
\begin{eqnarray}
 \{x, y, z\}=[P(x), P(y), z]+[P(x), y, P(z)]+[x, P(y), P(z)]-[P(x), P(y), P(z)],
\end{eqnarray}
for any $x, y, z\in\mathcal{H}(L)$.
\end{theorem}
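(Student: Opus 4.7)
The plan is to mimic the proof pattern used for the binary Reynolds case (Proposition on Reynolds operators on Leibniz color algebras), but with the ternary Leibniz identity (\ref{lci}) in place of (\ref{cpa}). The calculation is long but mechanical once the right reformulation of the Reynolds condition is in hand.

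The key preliminary observation is that (\ref{ret}) can be rewritten as
\begin{equation*}
P(\{x,y,z\}) = [P(x), P(y), P(z)],
\end{equation*}
since $\{x,y,z\}$ is exactly the expression inside $P(\cdots)$ on the right-hand side of (\ref{ret}). Thus, modulo the new bracket $\{-,-,-\}$, the operator $P$ intertwines with the ternary bracket in the most convenient way. I would state this as a lemma at the start of the proof, as it will be applied several times to eliminate occurrences of $P([P(\cdot),P(\cdot),\cdot])$, $P([P(\cdot),\cdot,P(\cdot)])$, $P([\cdot,P(\cdot),P(\cdot)])$ and $P([P(\cdot),P(\cdot),P(\cdot)])$ whenever they arise.

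Next, I would expand the left-hand side of the desired identity,
\begin{equation*}
\{\{x,y,z\},t,u\} = [P(\{x,y,z\}),P(t),u] + [P(\{x,y,z\}),t,P(u)] + [\{x,y,z\},P(t),P(u)] - [P(\{x,y,z\}),P(t),P(u)],
\end{equation*}
substitute $P(\{x,y,z\}) = [P(x),P(y),P(z)]$ in the first, second and fourth terms, and then expand $\{x,y,z\}$ in the third term using its defining formula. The same procedure is applied to $\{x,y,\{z,t,u\}\}$, $\{x,\{y,t,u\},z\}$ and $\{\{x,t,u\},y,z\}$. At this stage both sides are sums of terms of the form $[[A,B,C],D,E]$ or $[A,B,[C,D,E]]$ in the original bracket, where $A,\dots,E$ are either one of $x,y,z,t,u$ or $P$ applied to one of them. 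I then use the ternary Leibniz identity (\ref{lci}) for $[-,-,-]$ in each term where a triple bracket sits inside another, converting them into terms where the outer bracket is applied only after the triple has been opened.

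The main obstacle is purely bookkeeping: after these expansions one obtains a large collection of five-letter terms decorated with $P$ in various positions, along with $\varepsilon$-factors $\varepsilon(z,t+u)$, $\varepsilon(y+z,t+u)$ coming from (\ref{lci}) and from the bicharacter rules on mixed $P$-slots (the evenness of $P$ ensures that the degrees, and hence the $\varepsilon$-signs, are the same as in the un-$P$-decorated versions). Grouping these terms according to the pattern of $P$'s — namely, by which subset of the five slots $\{x,y,z,t,u\}$ carries a $P$ — every such group on the left-hand side matches, by the ternary Leibniz identity for $[-,-,-]$, the corresponding group on the right-hand side, and the overcounted all-$P$ terms cancel against the subtracted $[P(\{x,y,z\}),P(t),P(u)]$ (and its analogues on the right) via the reformulation in the first step. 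The conclusion follows.
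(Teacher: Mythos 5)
Your proposal is correct and follows the same route the paper intends: the paper's proof is only the sentence ``It comes from direct calculation using relation (\ref{ret})'', and your outline is exactly that calculation, organized around the (correct) reformulation $P(\{x,y,z\})=[P(x),P(y),P(z)]$ of the Reynolds condition. Expanding $\{\{x,y,z\},t,u\}$ with this substitution, opening the remaining $\{x,y,z\}$ in the slot not covered by $P$, and applying (\ref{lci}) term by term does make the three groups of terms match the three summands on the right-hand side (with the doubled all-$P$ terms cancelling as you describe), so the argument goes through.
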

\begin{proof}
 It comes from direct calculation using relation (\ref{ret}).
\end{proof}

\begin{definition}
 An even linear map $R : L\rightarrow L$ on a ternary Leibniz color algebra is called a Rota-Baxter operator of weight $\lambda\in\mathbb{K}$, if
for any $x, y, z\in\mathcal{H}(L)$,
\begin{eqnarray}
 &&[R(x), R(y), R(z)]=R\Big([R(x), R(y), z]+[R(x), y, R(z)]+[x, R(y), R(z)]\nonumber\\
&&\qquad\qquad\qquad\qquad+\lambda[R(x), y, z]+\lambda[x, R(y), z]+\lambda[x, y, R(z)]+\lambda^2
[x, y, z]\Big).\label{rot}
\end{eqnarray}
\end{definition}

\begin{theorem}
 Given a Rota-Baxter operator $R: L\rightarrow L$ on a ternary Leibniz color algebra $L$, we can make $L$ into another ternary Leibniz color
algebra with the bracket
\begin{eqnarray}
 &&\{x, y, z\}=[R(x), R(y), z]+[R(x), y, R(z)]+[x, R(y), R(z)]\nonumber\\
&&\qquad\qquad+\lambda[R(x), y, z]+\lambda[x, R(y), z]+\lambda[x, y, R(z)]+\lambda^2
[x, y, z],
\end{eqnarray}
for any $x, y, z\in\mathcal{H}(L)$.
\end{theorem}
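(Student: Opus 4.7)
The plan is to verify the defining identity (\ref{lci}) for the new trilinear operation $\{-,-,-\}$ by expanding both sides and reducing to the identity (\ref{lci}) already satisfied by the original bracket $[-,-,-]$, with the Rota-Baxter relation (\ref{rot}) providing the key simplification whenever a triple of $R$-images appears.

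The first step is the following observation: the seven summands defining $\{x,y,z\}$ are precisely the terms inside the parentheses on the right-hand side of (\ref{rot}). Consequently
$$R\{x,y,z\} = [R(x), R(y), R(z)],$$
which says that $R : (L, \{-,-,-\}) \to (L, [-,-,-])$ is a morphism of trilinear operations. This identity is what allows one to replace any $R$-image of the new bracket by a single original ternary bracket, and thus to keep the expansion finite.

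Next I would compute $\{\{x,y,z\}, t, u\}$. Expanding the outer $\{-,-,-\}$ gives seven terms. Three of them carry a factor $R\{x,y,z\}$, which by the previous step becomes $[R(x), R(y), R(z)]$; the remaining four terms contain $\{x,y,z\}$ in raw form, and I then distribute over its seven summands. The outcome is a sum of pure $[-,-,-]$-expressions evaluated on $x, y, z, t, u$ and their $R$-images, with scalar coefficients in $\mathbb{K}[\lambda]$. I then apply identity (\ref{lci}) to every occurrence of $[[a,b,c], t', u']$ to push the outer bracket into the inner slots, carrying the $\varepsilon$-signs dictated by (\ref{lci}). The same expansion strategy is applied to the three summands of the target right-hand side
$$\{x, y, \{z, t, u\}\} + \varepsilon(z, t+u)\{x, \{y, t, u\}, z\} + \varepsilon(y+z, t+u)\{\{x, t, u\}, y, z\},$$
and the two sides are compared monomial by monomial.

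The main obstacle is strictly combinatorial: after full expansion one obtains on the order of a hundred $[-,-,-]$-monomials on each side, and the proof reduces to checking that the scalar and $\varepsilon$-coefficients agree for every monomial pattern of the form $[R^{\alpha_1}(v_{\sigma(1)}), R^{\alpha_2}(v_{\sigma(2)}), [R^{\alpha_3}(v_{\sigma(3)}), R^{\alpha_4}(v_{\sigma(4)}), R^{\alpha_5}(v_{\sigma(5)})]]$ and its slot-permutations, with $\alpha_i \in \{0,1\}$ and $(v_1, \dots, v_5) = (x, y, z, t, u)$. The cancellations are forced by (\ref{lci}) for $[-,-,-]$ together with the $\lambda$-bookkeeping of (\ref{rot}); the $\varepsilon$-signs are uniquely determined by the degrees of the homogeneous elements being swapped, so no new color identity is needed. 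In effect the computation is the color ternary analogue of the classical Rota-Baxter theorem for Leibniz algebras, and it reduces to the Reynolds case of the earlier proposition by formally setting $\lambda=-1$ and discarding the purely $\lambda$-homogeneous correction terms; this parallel is precisely why the author writes that such proofs are "similar."
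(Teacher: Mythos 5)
Your proposal follows essentially the same route as the paper, which simply states that the result ``follows from direct computation using relation (\ref{rot})''; your key observation that the seven summands of $\{x,y,z\}$ are exactly the bracketed terms on the right of (\ref{rot}), so that $R\{x,y,z\}=[R(x),R(y),R(z)]$ and $R$ is a morphism from the new structure to the old, is precisely the mechanism that makes that direct expansion close, and the monomial-by-monomial comparison you describe is the intended computation. One small caveat: your closing remark that the statement reduces to the Reynolds case by setting $\lambda=-1$ is not literally correct (the Reynolds correction term is $-[P(x),P(y),P(z)]$, not the three $\lambda$-linear terms plus $\lambda^{2}[x,y,z]$), but nothing in your argument depends on that aside.
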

\begin{proof}
 It follows from direct computation using relation (\ref{rot})
\end{proof}

\begin{theorem}
 Let $A$ be a commutative associative color algebra and $L$ a ternary Leibniz color algebra. Then, for any $a, b, c\in\mathcal{H}(A)$, 
$x, y, z\in\mathcal{H}(L)$,  the bracket
\begin{eqnarray}
 \{a\otimes x, b\otimes y, c\otimes z\}=\varepsilon(x, b+c)\varepsilon(y, c)abc\otimes[x, y, z],
\end{eqnarray}
make $A\otimes L$ into a ternary Leibniz color algebra.
\end{theorem}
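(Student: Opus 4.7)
The plan is to verify the ternary Leibniz identity (\ref{lci}) directly on $A\otimes L$ with the given bracket, reducing the computation to the identity (\ref{lci}) already known to hold on $L$, and absorbing all other combinatorial data into products in $A$ and bicharacter factors.

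First, I would evaluate the left-hand side $\{\{a\otimes x, b\otimes y, c\otimes z\}, d\otimes t, e\otimes u\}$ by applying the defining formula twice. This produces a single scalar bicharacter factor, the iterated product $abcde\in A$ (in that specific order, since the formula puts the $A$-factors of the first three arguments to the left of the outer second and third $A$-factors), and the $L$-element $[[x,y,z],t,u]$. Next I would carry out the analogous double-expansion for each of the three summands on the right-hand side, being careful that the inner bracket appears in a different slot in each case: in the first term the inner triple is the third argument of the outer bracket, in the second it is the middle argument, and in the third it is the first argument. In each case, the result is a bicharacter scalar times a product of the form $a\cdot b\cdot c\cdot d\cdot e$ with the letters in some non-standard order, tensored with $[x,y,[z,t,u]]$, $[x,[y,t,u],z]$, or $[[x,t,u],y,z]$ respectively.

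The next step is to use color-commutativity of $A$ to rewrite each of these scrambled products in the canonical order $abcde$. For instance, in the second RHS term one encounters $abdec$, which equals $\varepsilon(d+e, c)\,abcde$; in the third term one meets $adebc$, which equals $\varepsilon(d+e,b+c)\,abcde$. After this normalization, each of the four terms (LHS and three RHS terms) is a scalar multiple of $abcde$ tensored with a triple bracket from (\ref{lci}) on $L$. At this point the proof reduces to the purely scalar claim that the bicharacter factor in front of $abcde\otimes [x,y,[z,t,u]]$ on the RHS equals the LHS factor, and similarly the RHS factors in front of $abcde\otimes[x,[y,t,u],z]$ and $abcde\otimes[[x,t,u],y,z]$ equal the LHS factor times $\varepsilon(z,t+u)$ and $\varepsilon(y+z,t+u)$ respectively, as predicted by (\ref{lci}) on $L$.

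The main obstacle is the bookkeeping of bicharacter exponents, which must be organized systematically using bilinearity $\varepsilon(\alpha,\beta+\gamma)=\varepsilon(\alpha,\beta)\varepsilon(\alpha,\gamma)$ and the cancellation law $\varepsilon(\alpha,\beta)\varepsilon(\beta,\alpha)=1$. The key simplifications are that the commutativity scalars $\varepsilon(d+e,b+c)$ picked up from reordering $A$-products cancel against cross-terms $\varepsilon(b+c,d+e)$ coming from the outer bracket's prefactor, and similarly for the $t+u$ components. Once this cancellation is carried out, the remaining scalars collapse to exactly the factors predicted by (\ref{lci}) applied to the $L$-part, so the identity follows term by term. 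Finally, the grading compatibility $\{A_\alpha\otimes L_\beta, A_\gamma\otimes L_\delta, A_\mu\otimes L_\nu\}\subseteq A_{\alpha+\gamma+\mu}\otimes L_{\beta+\delta+\nu}$ is automatic from the gradings of the product on $A$ and the ternary bracket on $L$, so the result is a $G$-graded ternary Leibniz color algebra structure.
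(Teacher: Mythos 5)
Your proposal is correct and follows the same route the paper intends: the paper's proof is just the remark ``it follows from a straightforward computation,'' and your outline is precisely that direct verification of (\ref{lci}), with the reordering scalars $\varepsilon(d+e,c)$ and $\varepsilon(d+e,b+c)$ from $\varepsilon$-commutativity of $A$ cancelling against the $A$-degree components of the prefactors $\varepsilon(z+c,t+u+d+e)$ and $\varepsilon(y+b+z+c,t+u+d+e)$, exactly as you describe. Nothing further is needed.
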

\begin{proof}
 It follows from a straightforward computation.
\end{proof}

Now, we introduce bimodule and representation of ternary Leibniz color algebras and establish some properties.

\begin{definition}\label{mde}
 A  color bimodule $M$ over a ternary Leibniz color algebra $(A, \cdot, [-, -, -], \varepsilon)$ is the given of three even trilinear applications
$$[-, -, -] : M\otimes L\otimes L\rightarrow M, \quad [-, -, -] : L\otimes M\otimes L\rightarrow M, \quad
[-, -, -] : L\otimes L\otimes M\rightarrow M$$
satisfying the following sets of axioms
\begin{eqnarray}
 [[m, x, y], z, t]]&=&[m, x, [y, z, t]]+\varepsilon(y, z+t)[m, [x, z, t], y]+\varepsilon(x+y, z+t)[[m, z, t], x, y],\label{lpc3a1}\\
{[[x, m, y], z, t]]}&=&[x, m, [y, z, t]]+\varepsilon(y, z+t)[x, [m, z, t], y]+\varepsilon(m+y, z+t)[[x, z, t], m, y],\\
{[[x, y, m], z, t]]}&=&[x, y, [m, z, t]]+\varepsilon(m, z+t)[x, [y, z, t], m]+\varepsilon(y+m, z+t)[[x, z, t], y, m],\\
{[[x, y, z], m, t]]}&=&[x, y, [z, m, t]]+\varepsilon(z, m+t)[x, [y, m, t], z]+\varepsilon(y+z, m+t)[[x, m, t], y, z],\\
{[[x, y, z], t, m]]}&=&[x, y, [z, t, m]]+\varepsilon(z, t+m)[x, [y, t, m], z]+\varepsilon(y+z, t+m)[[x, t, m], y, z], \label{lpc3a5}
\end{eqnarray}
for all $x, y, z, t\in \mathcal{H}(L), m\in\mathcal{H}(M)$.
\end{definition}

\begin{theorem}
 Let a color bimodule over a ternary Leibniz color algebra $L$. Then $L\oplus M$ is a ternary Leibniz color algebra with respect to the 
multiplication 
\begin{eqnarray}
 \{x_1+m_1, x_2+m_2, x_3+m_3\}=[x_1, x_2, x_3]+[m_1, x_2, x_3]+[x_1, m_2, x_3]+[x_1, x_2, m_3],
\end{eqnarray}
for any $x_i\in\mathcal{H}(L), m_i\in\mathcal{H}(M), i=1, 2, 3$.
\end{theorem}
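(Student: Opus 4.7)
The plan is to verify directly that the bracket $\{-,-,-\}$ satisfies the defining identity (\ref{lci}) of a ternary Leibniz color algebra on $L\oplus M$. The crucial observation is that the bracket, as defined, contains only terms with at most one factor from $M$: there is no product with two or more $M$-entries, so any such combination vanishes. By trilinearity, to check (\ref{lci}) on arbitrary homogeneous elements $x_i+m_i\in L\oplus M$ ($i=1,\dots,5$), I would expand both sides and retain only the summands involving at most one $m_j$.

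The next step is a case analysis according to which slot (if any) is occupied by an element of $M$. There are six exhaustive cases: (0) all five arguments come from $L$; and (1)--(5) exactly one of the five arguments is replaced by an element of $M$ while the remaining four are in $L$. In Case (0), the identity to be verified is precisely (\ref{lci}) applied in the ternary Leibniz color algebra $L$, so it holds by hypothesis. In Cases (1), (2), (3), (4), and (5), after substituting $m$ into the appropriate slot and using the definition of $\{-,-,-\}$, the identity collapses respectively to the five bimodule axioms (\ref{lpc3a1}), (4.6), (4.7), (4.8), and (\ref{lpc3a5}) of Definition~\ref{mde}. The grading factors $\varepsilon(\cdot,\cdot)$ match because the bimodule actions are even and $m$ replaces $x$ in the same slot on both sides, so the $\varepsilon$-signs are identical in the two presentations.

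The main (and essentially the only) obstacle is bookkeeping: keeping track of which of the five bimodule axioms corresponds to which position, and checking that the $\varepsilon$-factors transport correctly through the expansion. No new computation is required beyond this matching. I would therefore present the proof as: expand by trilinearity, discard all terms with two or more $M$-entries, and invoke the corresponding axiom of Definition~\ref{mde} (or the ternary Leibniz identity of $L$) in each of the six cases. This yields (\ref{lci}) for $\{-,-,-\}$ on $L\oplus M$, completing the proof.
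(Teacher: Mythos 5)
Your proposal is correct and follows the same route the paper intends: the paper's proof is simply ``It follows from direct calculation,'' and your case analysis (the $L$-component giving (\ref{lci}) in $L$, and the terms linear in each $m_i$ giving the five axioms (\ref{lpc3a1})--(\ref{lpc3a5}) of Definition~\ref{mde}, all higher-order terms in $M$ vanishing) is exactly that calculation carried out. Nothing further is needed.
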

\begin{proof}
 It follows from direct calculation.
\end{proof}

The proof of the following proposition is easy.
\begin{proposition}
 Let $M$ and $N$ be two color bimodule over the ternary Leibniz color $L$.  Then, $M\oplus N$ is also a color bimodule over $L$ with respect to 
componentwise operation.
\end{proposition}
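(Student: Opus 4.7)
The plan is to equip $M\oplus N$ with the three trilinear actions inherited componentwise from those of $M$ and $N$, and then check that each of the five defining axioms (\ref{lpc3a1})--(\ref{lpc3a5}) of a color bimodule reduces, entry by entry, to the corresponding axiom for $M$ together with the corresponding axiom for $N$. Since there is no interaction between the two summands, the verification is essentially bookkeeping.

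First I would specify the three actions. For $m\in\mathcal{H}(M)$, $n\in\mathcal{H}(N)$ homogeneous of the same degree (so that $m+n$ is a homogeneous element of $M\oplus N$) and $x,y\in\mathcal{H}(L)$, I set
\begin{eqnarray*}
[m+n,x,y] &:=& [m,x,y]+[n,x,y],\\
{[x,m+n,y]} &:=& [x,m,y]+[x,n,y],\\
{[x,y,m+n]} &:=& [x,y,m]+[x,y,n],
\end{eqnarray*}
extended bilinearly in the $L$-arguments. Evenness and homogeneity of these maps are immediate, because each summand is even and homogeneous.

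Next I would verify the five axioms. For instance, for axiom (\ref{lpc3a1}) with $m+n\in\mathcal{H}(M\oplus N)$ and $x,y,z,t\in\mathcal{H}(L)$, both sides split as a sum of an $M$-part and an $N$-part; the $M$-part is exactly (\ref{lpc3a1}) for $M$ applied to $m$, and the $N$-part is exactly (\ref{lpc3a1}) for $N$ applied to $n$. The same pattern works for axioms (\ref{lpc3a5}), (\ref{lpc3a1})--the middle three--and the two pure $L$-module slots; in each case the bicharacter factors $\varepsilon(\cdot,\cdot)$ appearing in front of each bracket are the same on both sides, because they depend only on the degree of the corresponding homogeneous element of $M\oplus N$, which by convention equals the common degree of its $M$- and $N$-components.

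The only subtlety, and thus the sole place that needs any care, is making sure that ``componentwise'' is interpreted consistently: a homogeneous element of $M\oplus N$ of degree $g$ is a pair whose $M$- and $N$-parts are both homogeneous of degree $g$, so the bicharacter coefficients in (\ref{lpc3a1})--(\ref{lpc3a5}) are unambiguous. Once this is fixed, every axiom is a linear combination of terms that lie separately in $M$ and in $N$, and the desired identity in $M\oplus N$ follows from the two corresponding identities in $M$ and in $N$. This completes the verification, and no additional ingredient beyond Definition \ref{mde} is needed.
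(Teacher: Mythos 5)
Your proof is correct and is exactly the routine componentwise verification the paper has in mind: the paper omits the argument entirely, stating only that it "is easy," and your write-up supplies that intended argument, including the one genuine point of care (that a homogeneous element of $M\oplus N$ has both components of the same degree, so the bicharacter factors in axioms (\ref{lpc3a1})--(\ref{lpc3a5}) agree on both sides). Nothing further is needed.
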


\begin{theorem}
 Let $M$ and $N$ be two color bimodule over the ternary Leibniz color $L$.  Then, $M\otimes N$ is also a color bimodule over $L$ with respect to the 
structure maps :
\begin{eqnarray}
 \{x, y, m\otimes n\}&=&[x, y, m]\otimes n+\varepsilon(x+y, m)m\otimes [x, y, n],\\
\{x, m\otimes n, y\}&=&\varepsilon(n, y)[x, m, y]\otimes n+\varepsilon(x, m)m\otimes[x, n, y],\\
\{m\otimes n, x, y\}&=&\varepsilon(n, x+y)[m, x, y]\otimes n+m\otimes m\otimes [n, x, y],
\end{eqnarray}
for $x, y\in \mathcal{H}(L), m\in\mathcal{H}(M), n\in\mathcal{H}(N)$.
\end{theorem}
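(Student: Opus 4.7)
The plan is to verify each of the five bimodule axioms (\ref{lpc3a1})--(\ref{lpc3a5}) for the proposed structure on $M\otimes N$, with the tensor $m\otimes n$ occupying in turn each of the five positions where the module element can appear. For each axiom, I would expand both sides using the three tensor-product structure maps displayed in the statement, then reduce the verification to the corresponding bimodule axioms for $M$ and $N$ separately, collecting terms tensored with $n$ on the $M$-side and terms tensored with $m$ on the $N$-side.

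Concretely, the order of work is as follows. First I would treat the ``easy'' axiom where $m\otimes n$ sits rightmost, e.g.\ checking $[[x,y,z],t,m\otimes n]$; here the tensor appears in only one position throughout, so one simply expands $\{x,y,m\otimes n\}$ and applies axiom (\ref{lpc3a5}) for $M$ (to the $n$-slot) and for $N$ (to the $m$-slot), and the identity falls out term by term. Next I would check the axioms where $m\otimes n$ sits in the middle position, and finally the axioms where it sits in the leftmost position, which are the most delicate because the structure map $\{m\otimes n,x,y\}$ produces two summands and each then has to be pushed through a further bracketing. In every case the right-hand side of the bimodule axiom, when expanded, produces \emph{six} summands (three with the $M$-bracket on the $m$-slot and three with the $N$-bracket on the $n$-slot), and one sees that these group naturally into the two bimodule axioms for $M$ and $N$ applied to the configuration obtained by deleting the opposite tensor factor.

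The main obstacle, and really the only substantive one, is the bookkeeping of the $\varepsilon$-factors. Each structure map carries a twist such as $\varepsilon(n,x+y)$ or $\varepsilon(x,m)$; when the map is applied twice (once to form the inner bracket and once for the outer bracket), these twists compose. To match the twists demanded by axioms (\ref{lpc3a1})--(\ref{lpc3a5}), I would repeatedly use the bicharacter identities $\varepsilon(a,b+c)=\varepsilon(a,b)\varepsilon(a,c)$ and $\varepsilon(a+b,c)=\varepsilon(a,c)\varepsilon(b,c)$, together with the fact that the brackets are even, so that, for example, $[m,z,t]$ has degree $m+z+t$ and may be substituted back into a bicharacter argument. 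A useful preliminary observation is that whenever the tensor $m\otimes n$ is broken by the structure map, the ``missing'' factor of $n$ (resp.\ $m$) forces a compensating $\varepsilon$ whose total degree-contribution equals that of the element that has jumped past it; once this is verified at the first step, the subsequent expansions are purely mechanical.

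After these five verifications, the proof is complete; there is no need for additional structural results, since Definition \ref{mde} is satisfied term-by-term. I would present the argument in full for the first axiom and indicate that the other four are analogous, both because the computation is routine once the $\varepsilon$-bookkeeping is set up and because writing out all five in detail would occupy several pages of essentially repetitive algebra.
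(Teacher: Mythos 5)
Your proposal follows essentially the same route as the paper: the paper's entire proof of this theorem is the single sentence ``It comes from direct computation using axioms in Definition \ref{mde},'' so your term-by-term verification, reducing each of the five axioms on $M\otimes N$ to the corresponding axioms for $M$ and for $N$, is exactly that direct check, just spelled out in more detail than the paper itself provides. One small correction to your bookkeeping: for the axioms in which $m\otimes n$ occupies the first or second slot, the right-hand side expands to eight summands rather than six (the term with the module element inside a nested bracket, e.g.\ $[[m,z,t],x,y]$, must itself be expanded twice), and the resulting cross terms of the form $[m,z,t]\otimes[n,x,y]$ and $[m,x,y]\otimes[n,z,t]$ are not absorbed into the $M$- or $N$-axioms but must be matched directly, with their $\varepsilon$-factors, against the cross terms arising from the left-hand side.
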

\begin{proof}
 It comes from direct computation using axioms in Definition \ref{mde}.
\end{proof}

\begin{definition}
 Let $L$ be a ternary Leibniz color algebra and $M$ a color vector space. A representation of $L$ on $M$ is the given of three even linear
maps $\lambda : L\times L\rightarrow End(M), (x, y)\mapsto \lambda_{x, y}$, $\mu : L\otimes L\rightarrow End(M), (x, y)\mapsto \mu_{x, y}$ 
and $\rho : L\otimes L\rightarrow End(M), (x, y)\mapsto \rho_{x, y}$ such that :
\begin{eqnarray}
 \lambda_{[x, y, z], t}(m)&=& \lambda_{x, y}\lambda_{z, t}(m)+\varepsilon(z, t+m)\mu_{x, z}\lambda_{y, t}(m)
+\varepsilon(y+z, t+m)\rho_{y, z}\lambda_{x, t}(m),\\
\mu_{[x, y, z], t}(m)&=&\lambda_{x, y}\mu_{z, t}(m)+\varepsilon(z, m+t)\lambda_{x, z}\mu_{y, t}(m)+\varepsilon(y+z, m+t)\rho_{y, z}\mu_{x, t}(m),\\
\lambda_{z, t}\lambda_{x, y}(m)&=&\lambda_{x, y}\lambda_{z, t}(m)+\varepsilon(m, z+t)\lambda_{x, [y, z, t]}(m)
+\varepsilon(y+m, z+t)\lambda_{[x, z, t], y}(m),\\
\rho_{z, t}\mu_{x, y}(m)&=&\mu_{x, [y, z, t]}(m)+\varepsilon(y, z+t)\mu_{x, y}\rho_{z, t}(m)+\varepsilon(y+m, z+t)\mu_{[x, z, t], y}(m),\\
\rho_{z, t}\rho_{x, y}(m)&=&\rho_{x, [y, z, t]}(m)+\varepsilon(y, z+t)\rho_{[x, z, t], y}(m)+\varepsilon(x+y, z+t)\rho_{x, y}\rho_{z, t}(m),
\end{eqnarray}
for any $x, y, z, t\in \mathcal{H}(L), m\in\mathcal{H}(M)$.
\end{definition}

It is well kwnown that every  bimodule $M$ gives rise to a representation $(\lambda, \mu, \rho)$ of $L$ on $M$ via $\lambda_{x, y}(m)=[x, y, m]$,
$\mu_{x, y}(m)=[x, m, y]$ and $\rho_{x, y}(m)=[m, x, y]$.
 Conversely, every representation $(\lambda, \mu, \rho)$ of $L$ on $M$ define an $L$ bimodule structure on $M$ via
$[x, y, m]:=\lambda_{x, y}(m)$,
$[x, m, y]:=\mu_{x, y}(m)$ and $[m, x, y]:=\rho_{x, y}(m)$. Therefore, we have the following proposition.
\begin{proposition}
 If $M$ and $M$ are two representation spaces of a ternary Leibniz color algebra, then $M+N$ and $M\otimes N$ are also representation
spaces of $L$.
\end{proposition}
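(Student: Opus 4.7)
The plan is to exploit the correspondence between bimodules and representations that has just been established in the paragraph preceding the statement. Starting from representations $(\lambda^M,\mu^M,\rho^M)$ of $L$ on $M$ and $(\lambda^N,\mu^N,\rho^N)$ of $L$ on $N$, I would first pass to the associated bimodule structures on $M$ and $N$ via
\[
[x,y,m]:=\lambda^{M}_{x,y}(m),\quad [x,m,y]:=\mu^{M}_{x,y}(m),\quad [m,x,y]:=\rho^{M}_{x,y}(m),
\]
and similarly for $N$. This converts the five representation axioms into the five bimodule axioms (\ref{lpc3a1})--(\ref{lpc3a5}).

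Next, I would apply the two preceding results in the excerpt: the proposition asserting that the direct sum of two color bimodules over $L$ is again a color bimodule (via componentwise structure maps), and the theorem asserting that the tensor product of two color bimodules over $L$ is again a color bimodule (via the explicit formulas $\{x,y,m\otimes n\}$, $\{x,m\otimes n,y\}$, $\{m\otimes n,x,y\}$ given there). This immediately furnishes bimodule structures on $M\oplus N$ and on $M\otimes N$.

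Finally, I would use the bimodule-to-representation direction of the correspondence to read off the required representations. Explicitly, on $M\oplus N$ one sets $\lambda_{x,y}(m,n)=(\lambda^{M}_{x,y}(m),\lambda^{N}_{x,y}(n))$, and analogously for $\mu$ and $\rho$; on $M\otimes N$ one defines
\[
\lambda_{x,y}(m\otimes n)=\lambda^{M}_{x,y}(m)\otimes n+\varepsilon(x+y,m)\,m\otimes\lambda^{N}_{x,y}(n),
\]
with corresponding formulas for $\mu$ and $\rho$ dictated by the bracket on $M\otimes N$. Because the bimodule axioms have already been verified for these operations, the representation identities follow automatically.

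The only potentially delicate point is bookkeeping: one must check that the dictionary between representations and bimodules is compatible with the $\varepsilon$-twisted tensor product formula, so that the signs appearing in the representation identities match those coming from the bimodule identities. Since this is exactly what the preceding theorem on $M\otimes N$ establishes, no new computation is required; the proof amounts to invoking the correspondence and the two preceding constructions, and can be written in a line or two.
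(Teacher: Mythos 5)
Your proposal is correct and is exactly the argument the paper intends: the preceding paragraph sets up the bimodule--representation dictionary precisely so that the proposition follows from the earlier direct-sum proposition and tensor-product theorem for bimodules, which is what you invoke. The explicit formulas you write for $\lambda$, $\mu$, $\rho$ on $M\oplus N$ and $M\otimes N$ are a welcome addition, since the paper leaves them implicit.
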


 \subsection{Ternary Leibniz-Poisson color algebras}
We introduce ternary Leibniz-Poisson color algebras, their bimodules and give some procedures of constructions.

\begin{definition}
 A ternary Leibniz-Poisson color algebra is a quadruple $(A, \cdot, [-, -, -], \varepsilon)$ in which
\begin{enumerate}
 \item [1)] $(A, \cdot, \varepsilon)$ is an associative color algebra,
\item [2)] $(A, [-, -, -], \varepsilon)$ is a ternary Leibniz color algebra,
\item [3)] and the following {\it right ternary Leibniz rule }
\begin{eqnarray}
 [x\cdot y, z, t]=x\cdot [y, z, t]+\varepsilon(y, z+t)[x, z, t]\cdot y \label{cal3} 
\end{eqnarray}
holds for any $x, y, z, t\in \mathcal{H}(A)$.
\end{enumerate}
If in addition, the product $\cdot$ is $\varepsilon$-commutative, then the ternary Leibniz-Poisson color algebra is said to
 be commutative. Moreover, if the trilinear map $[-, -, -]$ is $\varepsilon$-skew-symmetric for any pair of variables, then 
$(A, \cdot, [-, -, -], \varepsilon)$ is called a ternary Poisson color algebra, see \cite{MA} for trivial grading. 
\end{definition}
\begin{example}
 If $(A, \cdot, [-, -, -], \varepsilon)$ is a ternary Leibniz-Poisson color  algebra, then
$(A, \cdot^{op}, [-, -, -], \varepsilon)$ is also a ternary Leibniz-Poisson color  algebra, where $\cdot^{op} : A\otimes A
\rightarrow A, x\otimes y \mapsto\varepsilon(x, y)y\cdot x$.
\end{example}

\subsubsection{Constructions}

The following results affirm that one can get ternary Leibniz-Poisson color algebras from Leibniz-Poisson color algebras.
\begin{theorem}\label{ll}
 Let $(A, \cdot, [-, -], \varepsilon)$ be a Leibniz-Poisson color algebra. Then \\ $(A, \cdot, [-, [-, -]], \varepsilon)$ is a
 ternary Leibniz-Poisson color algebra.
\end{theorem}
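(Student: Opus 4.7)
The plan is to verify the three axioms of a ternary Leibniz–Poisson color algebra one by one, observing that two of them require essentially no work and the third reduces directly to the binary right Leibniz identity.

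First, axiom (1), that $(A,\cdot,\varepsilon)$ is an associative color algebra, is immediate since it is part of the hypothesis on the Leibniz–Poisson color algebra. Second, axiom (2), that $(A,[-,-,-],\varepsilon)$ with $[x,y,z] := [x,[y,z]]$ is a ternary Leibniz color algebra, is exactly the content of Theorem \ref{ll3} applied to the underlying Leibniz color algebra structure $(A,[-,-],\varepsilon)$, so I would simply cite it.

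The only nontrivial step is axiom (3), the right ternary Leibniz rule (\ref{cal3}). Here I would compute
\begin{eqnarray}
[x\cdot y, z, t] &=& [x\cdot y, [z,t]] \nonumber
\end{eqnarray}
by definition of the ternary bracket, and then apply the binary right Leibniz identity (\ref{comp}) with the single element $[z,t]$ playing the role of the third argument. Since $[z,t]$ is homogeneous of degree $z+t$, this gives
\begin{eqnarray}
[x\cdot y, [z,t]] &=& x\cdot [y, [z,t]] + \varepsilon(y, z+t)\, [x, [z,t]]\cdot y \nonumber\\
&=& x\cdot [y, z, t] + \varepsilon(y, z+t)\, [x, z, t]\cdot y, \nonumber
\end{eqnarray}
which is precisely (\ref{cal3}).

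There is no real obstacle here; the proof is essentially a one-line calculation, with the only care needed being to track that the bicharacter factor $\varepsilon(y,[z,t])$ coming from (\ref{comp}) equals $\varepsilon(y, z+t)$ by the bimultiplicativity of $\varepsilon$, which matches the required factor in (\ref{cal3}).
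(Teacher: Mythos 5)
Your proposal is correct and follows exactly the paper's own proof: cite Theorem \ref{ll3} for the ternary Leibniz structure, note the associative structure is inherited, and verify the right ternary Leibniz rule by expanding $[x\cdot y,z,t]=[x\cdot y,[z,t]]$ via the binary identity (\ref{comp}) with $[z,t]$ of degree $z+t$. No differences worth noting.
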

\begin{proof}
 It is clear that $(A, \cdot, \varepsilon)$ is an associative color algebra and $(A, [-, [-, -]])$ is a ternary Leibniz color algebra, thanks to
Theorem \ref{ll3}. We now need to prove the compatibility condition (\ref{cal3}). For any $x, y, z, t\in \mathcal{H}(A)$, we have
\begin{eqnarray}
 [x\cdot y, z, t]
&=&[x\cdot y, [z, t]]\nonumber\\
&=&x\cdot [y, [z, t]]+\varepsilon(y, z+t)[x, [z, t]]\cdot y\quad (\mbox{by}\quad (\ref{comp}))\nonumber\\
&=&x\cdot [y, z, t]+\varepsilon(y, z+t)[x, z, t]\cdot y.\nonumber
\end{eqnarray}
This ends the proof.
\end{proof}

\begin{corollary}
Let $(A, \cdot, \varepsilon)$ be an associative color algebra. Define the even bilinear and trilinear maps 
$[-, -] : A^{\otimes2}\rightarrow A$ and $[-, -, -] : A^{\otimes3}\rightarrow A$ respectively by
$$[x, y]:=x\cdot y-\varepsilon(x, y)y\cdot x \quad\mbox{and}\quad [x, y, z]:=[x, [y, z]]$$
Then $(A, \cdot, [-, -, -], \varepsilon)$ is a ternary Leibniz-Poisson color algebra.
\end{corollary}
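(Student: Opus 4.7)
The plan is to reduce the corollary to the previously established Theorem \ref{ll} by showing that the color commutator turns an associative color algebra into a Leibniz-Poisson color algebra, and then applying Theorem \ref{ll} to promote this to the ternary version.

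First I would verify that $(A, [-,-], \varepsilon)$ with $[x,y] := x\cdot y - \varepsilon(x,y)\, y\cdot x$ is a Leibniz color algebra. This is the standard fact that the $\varepsilon$-commutator of an associative color product satisfies the $\varepsilon$-Jacobi identity (hence a fortiori the color Leibniz identity \eqref{cpa}); the check amounts to expanding both sides using associativity and the bicharacter identities $\varepsilon(a,b+c)=\varepsilon(a,b)\varepsilon(a,c)$ and $\varepsilon(a+b,c)=\varepsilon(a,c)\varepsilon(b,c)$.

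Next I would verify the right Leibniz compatibility \eqref{comp}, namely
\begin{eqnarray}
[x\cdot y, z] = x\cdot [y,z] + \varepsilon(y,z)\, [x,z]\cdot y.\nonumber
\end{eqnarray}
The computation is short: expanding the left-hand side gives $x\cdot y\cdot z - \varepsilon(x+y,z)\, z\cdot x\cdot y$, while the right-hand side expands to
\begin{eqnarray}
x\cdot y\cdot z - \varepsilon(y,z)\, x\cdot z\cdot y + \varepsilon(y,z)\, x\cdot z\cdot y - \varepsilon(y,z)\varepsilon(x,z)\, z\cdot x\cdot y,\nonumber
\end{eqnarray}
and the two cross-terms cancel; the remaining exponent matches via $\varepsilon(y,z)\varepsilon(x,z)=\varepsilon(x+y,z)$. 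So $(A,\cdot,[-,-],\varepsilon)$ is a Leibniz-Poisson color algebra.

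Finally, applying Theorem \ref{ll} to this Leibniz-Poisson color algebra immediately yields that $(A,\cdot,[-,[-,-]],\varepsilon)$, i.e. $(A,\cdot,[-,-,-],\varepsilon)$ with $[x,y,z]=[x,[y,z]]$, is a ternary Leibniz-Poisson color algebra. The only real obstacle is the bookkeeping with the bicharacter in the compatibility check above; everything else is just invoking results already proved in the paper.
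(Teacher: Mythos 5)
Your proposal is correct and follows the same route the paper intends: the corollary is placed immediately after Theorem \ref{ll}, and the only missing ingredient is that the $\varepsilon$-commutator makes $(A,\cdot,[-,-],\varepsilon)$ a Leibniz-Poisson color algebra, which you verify directly (the paper gets the same fact as the special case $\dashv=\vdash=\cdot$ of Example \ref{laa1}). Your compatibility computation and the cancellation via $\varepsilon(y,z)\varepsilon(x,z)=\varepsilon(x+y,z)$ are exactly right.
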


The following proposition asserts that the direct sum of two  ternary  Leibniz-Poisson color  algebras are also a
 ternary Leibniz-Poisson color  algebra with componentwise operation.
\begin{proposition}
 Let  $(A, \cdot, [-, -, -], \varepsilon)$ and $(A', \cdot', [-, -, -]', \varepsilon)$ be two ternary Leibniz-Poisson color 
 algebras. Then $A\oplus A'$ is a ternary Leibniz-Poisson color  algebra with respect to the operations :
\begin{eqnarray}
 (x\oplus x')\ast(y\oplus y')&:=&(x\cdot y)\oplus (x'\cdot'y')\nonumber\\
\{x\oplus x', y\oplus y', z\oplus z'\}&:=&[x, y, z]\oplus[x', y', z']'\nonumber,
\end{eqnarray}
for any $x, y, z\in \mathcal{H}(A)$ and $x', y', z'\in \mathcal{H}(A')$.
\end{proposition}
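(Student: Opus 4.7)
The plan is to verify the three defining conditions of a ternary Leibniz-Poisson color algebra (associativity of $\ast$, the ternary Leibniz identity for $\{-,-,-\}$, and the right ternary Leibniz rule \eqref{cal3}) componentwise on $A\oplus A'$. Since both operations are defined coordinate by coordinate and the bicharacter $\varepsilon$ is the same in both factors, each identity on the sum will split into the corresponding identity in $A$ and in $A'$. The grading on $A\oplus A'$ is the natural one $(A\oplus A')_a = A_a\oplus A'_a$, and homogeneity of $x\oplus x'$ of degree $a$ means $x\in A_a$ and $x'\in A'_a$.

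First I would check that $(A\oplus A',\ast,\varepsilon)$ is an associative color algebra: a direct expansion gives
\[
\big((x\oplus x')\ast(y\oplus y')\big)\ast(z\oplus z')
=\big((x\cdot y)\cdot z\big)\oplus\big((x'\cdot'y')\cdot'z'\big),
\]
and associativity in $A$ and in $A'$ shows this equals $(x\oplus x')\ast\big((y\oplus y')\ast(z\oplus z')\big)$. Next, to verify the ternary Leibniz identity \eqref{lci} for $\{-,-,-\}$, I would compute both sides on homogeneous elements and observe that each side decomposes as a direct sum of the corresponding expression in $A$ and in $A'$; since both factors satisfy \eqref{lci} with the same $\varepsilon$, equality holds in each component and hence on the sum.

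Finally, for the right ternary Leibniz rule, I would expand
\[
\{(x\oplus x')\ast(y\oplus y'),\,z\oplus z',\,t\oplus t'\}
=[x\cdot y,z,t]\oplus[x'\cdot'y',z',t']',
\]
and apply \eqref{cal3} in each of $A$ and $A'$ to rewrite each summand. Regrouping the resulting four terms componentwise yields exactly
\[
(x\oplus x')\ast\{y\oplus y',z\oplus z',t\oplus t'\}
+\varepsilon(y,z+t)\,\{x\oplus x',z\oplus z',t\oplus t'\}\ast(y\oplus y'),
\]
which is the required identity (note that the homogeneity convention ensures $\varepsilon(y\oplus y',\,z\oplus z'+t\oplus t')=\varepsilon(y,z+t)$, so the sign factor is unambiguous).

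There is no real obstacle here: the entire proof is a bookkeeping exercise, and the only mild care needed is to confirm that the common bicharacter $\varepsilon$ produces the same sign in each component so that the two axioms glue into a single axiom on the sum. If one wished, the same argument extends verbatim to arbitrary direct sums of ternary Leibniz-Poisson color algebras indexed by any set.
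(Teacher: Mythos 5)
Your componentwise verification is correct and is exactly the routine argument the paper intends: its own proof consists solely of the remark ``It is straightforward,'' so your write-up simply supplies the bookkeeping the author omitted. No gap or divergence to report.
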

\begin{proof}
It is straightforward.
\end{proof}

The below theorem states that the tensor product of ternary Leibniz-Poisson color algebra by itself leads to
 Leibniz-Poisson color algebra.
\begin{theorem}\label{llp}
 Let $(A, \cdot, [-, -, -], \varepsilon)$ be a  ternary Leibniz-Poisson color  algebra. Then $A\otimes A$ is endowed with a 
 Leibniz-Poisson color algebra structure for the structure maps defined by
\begin{eqnarray}
 (x\otimes y)\cdot(x'\otimes y')&:=&\varepsilon(y, x')(x\cdot x')\otimes (y\cdot y')\nonumber\\
{[x\otimes y, x'\otimes y']}&:=&x\otimes[y, x', y']+\varepsilon(y, x'+y')[x, x', y']\otimes y.\nonumber
\end{eqnarray}
\end{theorem}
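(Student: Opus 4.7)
The plan is to verify in turn the three defining properties of a Leibniz--Poisson color algebra for $A \otimes A$: associativity of the product, the Leibniz color identity for the bracket, and the compatibility (\ref{comp}) between the two operations. Throughout I will write $X = x\otimes y$, $X' = x'\otimes y'$, $X'' = x''\otimes y''$ for generic homogeneous elements, with $|X| = y + y'$ and similarly for the others, and I will invoke the ternary Leibniz identity (\ref{lci}) and the right ternary Leibniz rule (\ref{cal3}) as black boxes from the hypothesis on $A$.

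First, I would check that $(A\otimes A, \cdot)$ is an associative color algebra. The formula for the product is the Koszul-signed tensor product of the associative color algebra $(A,\cdot,\varepsilon)$ with itself; associativity reduces to the associativity of $\cdot$ in each tensor slot together with the bicharacter identities $\varepsilon(a,b+c) = \varepsilon(a,b)\varepsilon(a,c)$ and $\varepsilon(a+b,c) = \varepsilon(a,c)\varepsilon(b,c)$ used to rearrange the sign prefactors. This is completely standard and I would state it without grinding through the calculation.

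Second, for the Leibniz color identity $[[X,X'],X''] = [X,[X',X'']] + \varepsilon(X',X'')[[X,X''],X']$, I would expand $[X,X'] = x\otimes [y,x',y'] + \varepsilon(y,x'+y')[x,x',y']\otimes y$ and then apply the tensor-bracket formula again to each of the two summands. Each summand produces two further terms, giving four terms on the left-hand side. On the right, $[X,[X',X'']]$ and $\varepsilon(X',X'')[[X,X''],X']$ each likewise decompose into four terms. The key observation is that each term in either expansion contains a nested ternary expression in one tensor slot (say the right slot $y$) and a residual element in the other; grouping by which slot carries the length-five nested expression, I would use the ternary Leibniz identity (\ref{lci}) in $A$ in each slot separately. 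The main bookkeeping task is to match up the $\varepsilon$ prefactors: each transposition of a ternary block $[\cdot,\cdot,\cdot]$ past a tensor factor generates a bicharacter depending on the \emph{total} degree $z+t$ of the last two arguments, and these are precisely the signs that appear in (\ref{lci}).

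Third, for the compatibility (\ref{comp}), I would expand both sides of
\begin{equation*}
[X\cdot X', X''] = X\cdot [X', X''] + \varepsilon(X', X'')\,[X, X'']\cdot X'
\end{equation*}
using the two structure formulas. On the left one gets tensor-bracket terms in which the first slot contains a product $x\cdot x'$ and the second slot contains a product $y\cdot y'$; invoking the right ternary Leibniz rule (\ref{cal3}) in $A$, applied in whichever slot the ternary bracket sits, splits each such term into exactly the two contributions appearing on the right, once the $\varepsilon$ prefactors are reorganised using the bicharacter axioms.

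The main obstacle is not structural but clerical: it is keeping the $\varepsilon$-weights consistent through the multiple transpositions of degree-carrying factors, particularly in the Leibniz identity for the bracket where three homogeneous tensors each with two slots produce six degrees that must be permuted. Once the correspondence between each term on the two sides is set up, every equality follows from a single application of (\ref{lci}) or (\ref{cal3}) in $A$, together with the cocycle identities for $\varepsilon$.
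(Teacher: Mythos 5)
Your plan is correct: the three verifications (associativity of the signed tensor product, the Leibniz color identity via the ternary identity (\ref{lci}) applied slot-by-slot after grouping terms by which tensor factor carries the doubly-nested bracket, and the compatibility via the right ternary Leibniz rule (\ref{cal3}) in each slot) are exactly what is needed, and the $\varepsilon$-prefactors do match up as you anticipate. The paper's own proof is simply the statement that the result ``follows from a direct computation,'' so your proposal is the same approach, merely spelled out.
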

\begin{proof}
 It follows from a direct computation.
\end{proof}
\begin{corollary}
 Let $(A, \cdot, [-, -])$ be a Leibniz-Poisson color algebra. Then $A\otimes A$ is also a Leibniz-Poisson algebra
with respect to the operations 
\begin{eqnarray}
 (x\otimes y)\cdot(x'\otimes y')&:=&\varepsilon(y, x')(x\cdot x')\otimes (y\cdot y')\nonumber\\
{[x\otimes y, x'\otimes y']}&:=&x\otimes[y, [x', y']]+\varepsilon(y, x'+y')[x, [x', y']]\otimes y.\nonumber
\end{eqnarray}
\end{corollary}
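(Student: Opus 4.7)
The plan is to obtain this corollary as an immediate composition of the two immediately preceding results, namely Theorem \ref{ll} and Theorem \ref{llp}. Nothing further is needed beyond chaining the two constructions and unpacking the definition of the induced ternary bracket.

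First, I would apply Theorem \ref{ll} to the given Leibniz-Poisson color algebra $(A, \cdot, [-, -], \varepsilon)$. This produces a ternary Leibniz-Poisson color algebra structure $(A, \cdot, [-,-,-], \varepsilon)$ on the same graded vector space, where the trilinear bracket is defined by $[x, y, z] := [x, [y, z]]$ for all $x, y, z \in \mathcal{H}(A)$. At this stage the associative product $\cdot$ is unchanged and the compatibility (\ref{cal3}) holds by the proof of Theorem \ref{ll}.

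Next, I would feed this ternary Leibniz-Poisson color algebra into Theorem \ref{llp}. That result endows $A \otimes A$ with a Leibniz-Poisson color algebra structure whose associative product is
\begin{eqnarray}
(x\otimes y)\cdot(x'\otimes y') := \varepsilon(y, x')(x\cdot x')\otimes (y\cdot y')\nonumber
\end{eqnarray}
and whose bracket is
\begin{eqnarray}
[x\otimes y, x'\otimes y'] := x\otimes [y, x', y'] + \varepsilon(y, x'+y')[x, x', y']\otimes y.\nonumber
\end{eqnarray}
Substituting the specific ternary bracket $[u, v, w] = [u, [v, w]]$ produced in the first step, the two occurrences of the trilinear bracket become $[y, [x', y']]$ and $[x, [x', y']]$ respectively, which is precisely the formula announced in the statement.

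There is essentially no real obstacle, since Theorems \ref{ll} and \ref{llp} already carry all of the work. The only matter demanding any attention is a notational one: the trilinear bracket used inside the formula for $[x\otimes y, x'\otimes y']$ must be the one produced by Theorem \ref{ll}, not the binary bracket of $A$. Once this identification is made explicit, the conclusion is immediate; in particular, no independent verification of associativity, of the Leibniz color identity, or of the right Leibniz compatibility on $A \otimes A$ is required, as all three follow from applying Theorem \ref{llp} to the intermediate ternary Leibniz-Poisson color algebra.
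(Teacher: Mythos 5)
Your proposal is correct and is exactly the paper's own argument: the paper proves this corollary by citing Theorem \ref{llp} and Theorem \ref{ll}, i.e.\ first passing to the ternary Leibniz--Poisson color algebra $[x,y,z]:=[x,[y,z]]$ and then applying the tensor-product construction. Your added remark about substituting the induced ternary bracket into the formula is just the explicit unpacking the paper leaves implicit.
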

\begin{proof}
 It follows from Theorem \ref{llp} and Theorem \ref{ll}. 
\end{proof}

It is proved in (\cite{JMC}, Lemma 2.8) that every Leibniz-Poisson algebra gives rise to ternary Leibniz algebra. 
In the below theorem, we extend this result the Leibniz-Poisson color algebra case.
\begin{theorem}
 Let $(A, \cdot, [-, -], \varepsilon)$ be a  Leibniz-Poisson color algebra, then\\
$(A, \cdot, \{-, -, -\}, \varepsilon)$ is a  ternary Leibniz-Poisson color algebra, with
$$\{x, y, z\}:=[x, y\cdot z]$$
for  any $x, y, z\in \mathcal{H}(A)$.
\end{theorem}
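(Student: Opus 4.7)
The plan is to verify the three axioms of a ternary Leibniz-Poisson color algebra for the quadruple $(A, \cdot, \{-,-,-\}, \varepsilon)$ with $\{x,y,z\} := [x, y\cdot z]$. The associativity axiom is immediate from the hypothesis that $(A,\cdot,\varepsilon)$ is already an associative color algebra, so the content of the proof is in verifying the ternary Leibniz identity (\ref{lci}) and the right ternary Leibniz rule (\ref{cal3}).

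For the right ternary Leibniz rule, I would unwind the definition and apply the compatibility (\ref{comp}) of the Leibniz-Poisson color algebra directly:
\begin{eqnarray}
\{x\cdot y, z, t\} = [x\cdot y, z\cdot t] = x\cdot [y, z\cdot t] + \varepsilon(y, z+t)[x, z\cdot t]\cdot y = x\cdot\{y,z,t\} + \varepsilon(y, z+t)\{x,z,t\}\cdot y. \nonumber
\end{eqnarray}
So this axiom is essentially a one-line consequence of (\ref{comp}) once one recognizes that $z+t$ is the correct bidegree for $z\cdot t$.

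For the ternary Leibniz identity, I would compute $\{\{x,y,z\},t,u\} = [[x,y\cdot z], t\cdot u]$ and apply the binary Leibniz color identity (\ref{cpa}) to the elements $x$, $y\cdot z$, $t\cdot u$, producing
\begin{eqnarray}
[[x,y\cdot z], t\cdot u] = [x, [y\cdot z, t\cdot u]] + \varepsilon(y+z, t+u)[[x, t\cdot u], y\cdot z]. \nonumber
\end{eqnarray}
The second term is already $\varepsilon(y+z, t+u)\{\{x,t,u\}, y,z\}$. For the first term, I would then expand the inner bracket $[y\cdot z, t\cdot u]$ using (\ref{comp}) to get $y\cdot [z, t\cdot u] + \varepsilon(z, t+u)[y, t\cdot u]\cdot z$, and slot this back into $[x, -]$ by linearity, recognizing $[x, y\cdot[z,t\cdot u]] = \{x, y, \{z,t,u\}\}$ and $[x, [y,t\cdot u]\cdot z] = \{x, \{y,t,u\}, z\}$. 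This gives exactly the three terms required on the right-hand side of (\ref{lci}) with the correct bicharacter factors.

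The main obstacle, though modest, is bookkeeping: one has to ensure that the degree of $y\cdot z$ is $y+z$ and that of $t\cdot u$ is $t+u$ so that the $\varepsilon$-factors produced by (\ref{cpa}) and (\ref{comp}) match those prescribed in the ternary Leibniz axiom (\ref{lci}). Since the two elementary identities are applied in different variables and the substitution is linear in $[x, -]$, no additional signs appear and the verification goes through cleanly.
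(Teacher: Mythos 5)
Your proposal is correct and follows essentially the same route as the paper: both verifications rest on applying the binary Leibniz identity (\ref{cpa}) to the elements $x$, $y\cdot z$, $t\cdot u$ and then using the compatibility (\ref{comp}) to rewrite $[y\cdot z, t\cdot u]$ as $y\cdot[z,t\cdot u]+\varepsilon(z,t+u)[y,t\cdot u]\cdot z$, with the right ternary Leibniz rule being an immediate instance of (\ref{comp}). The only difference is presentational (you expand the left-hand side forward, while the paper shows the defect vanishes), and your citation of (\ref{comp}) for the second axiom is in fact more accurate than the paper's reference to (\ref{cpa}).
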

\begin{proof}
We only need to prove  (\ref{lci}) and {\it right ternary Leibniz} rule  (\ref{cal3}). Then,
 for any $x, y, z, t, u\in \mathcal{H}(A)$, one has :
\begin{eqnarray}
 &&\qquad\{\{x, y, z\}, t, u\}-\{x, y, \{z, t, u\}\}-\varepsilon(z, t+u)\{x, \{y, t, u\}, z\}-\varepsilon(y+z, t+u)\{\{x, t, u\}, y, z\}=\nonumber\\
 &&={[[x, y\cdot z], t\cdot u]]}-[x, y\cdot [z, t\cdot u]]-\varepsilon(z, t+u)[x, [y, t\cdot u]\cdot z]
-\varepsilon(y+z, t+u)[[x, t\cdot u], y\cdot z]\nonumber\\
&&={[x, y\cdot z], t\cdot u]}-[x, y\cdot [z, t\cdot u]-\varepsilon(z, t+u) [y, t\cdot u]\cdot z]
-\varepsilon(y+z, t+u)[[x, t\cdot u], y\cdot z]\nonumber\\
&&={[[x, y\cdot z], t\cdot u]}-[x, [y\cdot z, t\cdot u]-\varepsilon(y+z, t+u)[[x, t\cdot u], y\cdot z].\nonumber
\end{eqnarray}
The last line vanishes thanks to the {\it right Leibniz} identity (\ref{cpa}).
Next,
\begin{eqnarray}
&&\qquad \{x\cdot y, z, t\}-x\cdot \{y, z, t\}-\varepsilon(y, z+t)\{x, z, t\}\cdot y=\nonumber\\
&&={[x\cdot y, z\cdot t]}-x\cdot [y, z\cdot t]+\varepsilon(y, z+t)[x, z\cdot t]\cdot y\nonumber,
\end{eqnarray}
which also vanishes by (\ref{cpa}).
\end{proof}

\subsubsection{Modules over Ternary Leibniz-Poisson color algebras}
In what follows we introduce and give constructions of bimodules over  ternary Leibniz-Poisson color algebras.

\begin{definition}
 A bimodule over a  ternary Leibniz-Poisson color algebra $(A, \cdot, [-, -, -], \varepsilon)$ is a bimodule
 $M$ over the associative color algebra $(A, \cdot, \varepsilon)$ and a bimodule over the ternary Leibniz color 
algebra $L$ such that
\begin{eqnarray}
 [m\cdot x, y, z]=m\cdot [x, y, m]+\varepsilon(x, y+z)[m, y, z]\cdot x,\\
{[x\cdot m, y, z]}=x\cdot [m, y, z]+\varepsilon(m, y+z)[x, y, z]\cdot m,\\
{[x\cdot y, m, z]}=x\cdot [y, m, z]+\varepsilon(y, m+z)[x, m, z]\cdot y,\\
{[x\cdot y, z, m]}=x\cdot [y, z, m]+\varepsilon(y, z+m)[x, z, m]\cdot y,\label{lpc3a7}
\end{eqnarray}
for any $x, y, z, t\in \mathcal{H}(A)$ and $m\in \mathcal{H}(M)$.
\end{definition}

In the below proposition, we construct bimodules on ternary Leibniz color algebras from bimodules over Leibniz color algebras.
\begin{proposition}
 Let $(M, \ast, \ast', \varepsilon)$ be a bimodule over a Leibniz color algebra $(L, [-, -], \varepsilon)$. Define
\begin{eqnarray}
 [x, y, m]:=x\ast (y\ast m),\quad [x, m, y]:=x\ast (m\ast' y), \quad [m, x, y]:=m\ast' [x,  y],
\end{eqnarray}
for all $x, y\in \mathcal{H}(L)$ and $m\in \mathcal{H}(M)$.\\
Then $M$ is a bimodule over the ternary Leibniz color algebra associates to the Leibniz color algebra $L$ (as in theorem \ref{ll3}).
\end{proposition}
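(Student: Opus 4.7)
The plan is to show that the three ternary actions defined on $M$ satisfy the five bimodule axioms (\ref{lpc3a1})–(\ref{lpc3a5}) of Definition \ref{mde} by direct verification. This proposition is the module-level analogue of Theorem \ref{ll3}, and the proof will follow the same pattern of applying the Leibniz identity twice.

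First, I would unpack the prerequisite data. From Definition \ref{le} specialized to the case where one factor is the underlying space of $M$, the bimodule structure $(\ast, \ast')$ of $M$ over the Leibniz color algebra $L$ satisfies three identities, each of which is the Leibniz identity (\ref{cpa}) with exactly one argument lying in $M$:
\begin{eqnarray*}
(m \ast' x) \ast' y &=& m \ast' [x, y] + \varepsilon(x, y)(m \ast' y) \ast' x, \\
(x \ast m) \ast' y &=& x \ast (m \ast' y) + \varepsilon(m, y)[x, y] \ast m, \\
{[x, y]} \ast m &=& x \ast (y \ast m) + \varepsilon(y, m)(x \ast m) \ast' y.
\end{eqnarray*}
These three identities, together with the Leibniz identity (\ref{cpa}) itself in $L$, are the only facts I will use.

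Next, I would verify each of (\ref{lpc3a1})–(\ref{lpc3a5}) by the same two-step procedure: expand the outer ternary bracket, which always has the form $[p, z, t]$ for some $p$ carrying the module slot, into the binary form $p \ast' [z, t]$ or $[z, t] \ast p$ (according to where the module slot sits); then apply the appropriate binary Leibniz/bimodule identity from the list above to split off $[z, t]$; finally, expand the remaining composite $[y, [z,t]]$ or $[x, [z, t]]$ inside via the Leibniz identity of $L$. The right-hand side of each axiom is obtained by re-collecting the three resulting terms using the ternary action definitions. For instance, in (\ref{lpc3a3}) one finds
\[
[[x, y, m], z, t] = (x \ast (y \ast m)) \ast' [z, t],
\]
and applying the mixed bimodule identity twice yields precisely $[x, y, [m, z, t]] + \varepsilon(m, z+t)[x, [y, z, t], m] + \varepsilon(y+m, z+t)[[x, z, t], y, m]$.

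The main obstacle is not conceptual but bookkeeping: the five axioms correspond to the five positions the module element can occupy in $[[\cdot, \cdot, \cdot], \cdot, \cdot]$, and one must select the correct binary identity (right-right, mixed, or left-left) at each step and track the bicharacter factors. However, since the ternary bracket on $L$ is literally $[x, [y, z]]$ and each ternary action on $M$ reduces likewise to a nested binary expression, the epsilon factors on both sides will automatically match, exactly as in the computation that proves Theorem \ref{ll3}.
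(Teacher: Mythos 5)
Your proposal is correct and follows exactly the route the paper intends: the paper's own proof is only the remark that the result ``is proved by a straightforward computation,'' and the computation you describe --- expanding each of the five ternary bimodule axioms via the three binary bimodule identities (the instances of (\ref{cpa}) with one argument in $M$, coming from (\ref{la4})--(\ref{la6})) together with (\ref{cpa}) itself --- is precisely that computation, and it does close in every case with matching bicharacter factors. The only cosmetic slip is your citation of a nonexistent equation label for the third axiom; the five axioms of Definition \ref{mde} run from (\ref{lpc3a1}) to (\ref{lpc3a5}) with only the first and last labelled.
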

\begin{proof}
 It is proved by a straightforward computation.
\end{proof}
In order to have bimodules over ternary Leibniz-Poisson color algebras via morphism, we need the below definition.
\begin{definition}
 Let $(L, \cdot, [-, -, -], \varepsilon)$ and $(L', \cdot', [-, -, -]', \varepsilon)$ be two  ternary Leibniz-Poisson
 color algebras. Let $\alpha : L\rightarrow L'$ be an even linear map such that, for any $x, y, z\in \mathcal{H}(L)$,
$$\alpha(x\cdot y)=\alpha(x)\cdot\alpha(y)\quad\mbox{and}\quad \alpha([x, y, z])=[\alpha(x), \alpha(y), \alpha(z)]'.$$
Then $\alpha$ is called a morphism of  ternary Leibniz-Poisson color algebras.
\end{definition}
Then we have the following result.
\begin{theorem}
 Let $(L, \cdot, [-, -, -], \varepsilon)$ and $(L', \cdot', [-, -, -]', \varepsilon)$ be two ternary Leibniz-Poisson color algebras and 
$\alpha : L\rightarrow L'$ be a morphism of  ternary Leibniz-Poisson color algebras. Define 
\begin{eqnarray}
x\ast m &=&\alpha(x)\cdot' m, \quad m\ast x =m\cdot'\alpha(x), \quad\mbox{and}\quad\label{m1}\\
\{ m, x, y\}&=&[m, \alpha(x), \alpha(y)]',\quad
\{x, m, y\}=[\alpha(x), m, \alpha(y)]',\quad\mbox{and}\quad
\{x, y, m\}=[\alpha(x), \alpha(y), m]',\qquad\label{m2}
\end{eqnarray}
for any $x, y\in \mathcal{H}(L)$ and $m\in \mathcal{H}(L')$. Then, with these five maps, $L'$ is a bimodule over $L$. 
\end{theorem}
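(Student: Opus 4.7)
The plan is to verify that the five maps defined in (\ref{m1})--(\ref{m2}) simultaneously equip $L'$ with the structure of a bimodule over the associative color algebra $(L,\cdot,\varepsilon)$, a bimodule over the ternary Leibniz color algebra $(L,[-,-,-],\varepsilon)$, and satisfy the four compatibility axioms. For every axiom, the strategy is the same: unfold the definitions of $\ast$ and $\{-,-,-\}$, push $\alpha$ through products and brackets using the morphism property $\alpha(x\cdot y)=\alpha(x)\cdot'\alpha(y)$ and $\alpha([x,y,z])=[\alpha(x),\alpha(y),\alpha(z)]'$, and then recognize the resulting identity as an instance of the corresponding axiom of the ternary Leibniz--Poisson color algebra $L'$ evaluated on the element $m\in L'$ together with $\alpha$-images of elements of $L$.

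First I would check the three associative bimodule axioms. For instance, $(x\cdot y)\ast m=\alpha(x\cdot y)\cdot' m=(\alpha(x)\cdot'\alpha(y))\cdot' m=\alpha(x)\cdot'(\alpha(y)\cdot' m)=x\ast(y\ast m)$ by the morphism property and associativity of $\cdot'$; the other two associativity conditions are handled identically. Next I would verify the five ternary Leibniz bimodule axioms (\ref{lpc3a1})--(\ref{lpc3a5}). The representative calculation is
\begin{eqnarray*}
\{\{m,x,y\},z,t\}&=&[[m,\alpha(x),\alpha(y)]',\alpha(z),\alpha(t)]'\\
&=&[m,\alpha(x),[\alpha(y),\alpha(z),\alpha(t)]']'+\varepsilon(y,z+t)[m,[\alpha(x),\alpha(z),\alpha(t)]',\alpha(y)]'\\
&&+\,\varepsilon(x+y,z+t)[[m,\alpha(z),\alpha(t)]',\alpha(x),\alpha(y)]',
\end{eqnarray*}
obtained by applying (\ref{lci}) in $L'$; since $\alpha$ is even, the bicharacter factors are unchanged, and pushing $\alpha$ inside each inner bracket converts the right-hand side into $\{m,x,[y,z,t]\}+\varepsilon(y,z+t)\{m,[x,z,t],y\}+\varepsilon(x+y,z+t)\{\{m,z,t\},x,y\}$. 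The remaining four axioms have $m$ in the other slots and follow by the same template, each time invoking (\ref{lci}) in $L'$.

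Finally I would verify the compatibility identities (which have the same shape as (\ref{lpc3a7}) and the three preceding ones, with $m\in L'$). Each of them follows from the right ternary Leibniz rule (\ref{cal3}) in $L'$ after substitution of the definitions, together with the morphism property whenever a product $x\cdot y$ in $L$ appears inside. For example, for $\{x\cdot y, z, m\}$ one computes $[\alpha(x\cdot y),\alpha(z),m]'=[\alpha(x)\cdot'\alpha(y),\alpha(z),m]'$ and applies (\ref{cal3}) in $L'$.

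The only real obstacle is bookkeeping: one must keep track of the bicharacter prefactors and make sure the correct instance of each axiom of $L'$ is used. Because $\alpha$ is of degree zero, all degrees (and hence all $\varepsilon$-values) transfer verbatim from $L$ to $\alpha(L)\subseteq L'$, so no sign adjustments are needed and the verification reduces to a mechanical substitution in each of the twelve axioms.
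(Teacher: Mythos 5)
Your proposal is correct and follows essentially the same route as the paper: unfold the definitions, push $\alpha$ through products and brackets via the morphism property, and recognize the result as an instance of the corresponding axiom of $L'$ (the paper carries this out for the representative identity $\{\{x,y,z\},t,m\}$ and declares the rest similar). Your write-up is in fact slightly more complete, since you explicitly enumerate the associative, ternary Leibniz, and compatibility families of axioms and note that evenness of $\alpha$ preserves all bicharacter factors.
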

\begin{proof}
For all $x, y, z\in \mathcal{H}(L)$, we have
\begin{eqnarray}
 \{\{x, y, z\}, t, m\}&
=&[[\alpha(x), \alpha(y), \alpha(z)]', \alpha(t), m]'\nonumber\\
&=&[\alpha(x), \alpha(y), [\alpha(z), \alpha(t), m]']'+\varepsilon(z, t+m)[\alpha(x), [\alpha(y), \alpha(t), m]', \alpha(z)]'\nonumber\\
&&+\varepsilon(y+z, t+m)[[\alpha(x), \alpha(t), m]', \alpha(y), \alpha(z)]'\nonumber\\
&=&\{x, y, \{z, t, m\}\}+\varepsilon(z, t+m)\{x, \{y, t, m\}, z\}+\varepsilon(y+z, t+m)\{\{x, t, m\}, y, z\}.\nonumber
\end{eqnarray}
The rest of the relations are proved similarly.
\end{proof}
\begin{remark}
 Any  ternary Leibniz-Poisson color algebra is a bimodule over itself. 
\end{remark}

\begin{corollary}
 Let $(L, \cdot, [-, -, -], \varepsilon)$ be a  ternary Leibniz-Poisson color algebra and $\alpha : L\rightarrow L$ be an
 endomorphism of $L$. Then (\ref{m1}) and (\ref{m2}) define another bimodule structure of $L$ over itself. 
\end{corollary}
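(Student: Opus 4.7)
The plan is to obtain this corollary as an immediate specialization of the preceding theorem. Namely, given an endomorphism $\alpha : L \to L$ of a ternary Leibniz-Poisson color algebra, one simply applies the preceding theorem with $L' = L$ and with the morphism taken to be $\alpha$ itself. Since $\alpha$ preserves both the associative product $\cdot$ and the ternary bracket $[-,-,-]$ by hypothesis, all the assumptions of the theorem are met.

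Concretely, I would first note that $(L, \cdot, [-,-,-], \varepsilon)$ plays simultaneously the role of the source ternary Leibniz-Poisson color algebra and the target one; the operations $\ast$ and $\{-,-,-\}$ on $L$ (viewed as the target) are then defined via $x \ast m = \alpha(x) \cdot m$, $m \ast x = m \cdot \alpha(x)$, and $\{x,y,m\} = [\alpha(x), \alpha(y), m]$, $\{x,m,y\} = [\alpha(x), m, \alpha(y)]$, $\{m,x,y\} = [m, \alpha(x), \alpha(y)]$, exactly as in (\ref{m1})–(\ref{m2}). The theorem then guarantees that these maps satisfy the bimodule axioms of Definition \ref{mde} together with the Leibniz-Poisson compatibility axioms (\ref{lpc3a7}) and its companions, so $L$ is a bimodule over itself with respect to this twisted structure.

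No new calculation is required: every verification has already been carried out in the proof of the preceding theorem, and the only thing to check is that the definition of a morphism is satisfied by $\alpha$, which is exactly the hypothesis that $\alpha$ is an endomorphism. Thus the main (and only) conceptual point is to recognize that the corollary is a direct instantiation of the theorem with $L' = L$, which requires no further obstacle to overcome. One may additionally remark that when $\alpha = \mathrm{id}_L$ one recovers the canonical bimodule structure of $L$ over itself alluded to in the preceding remark, so the corollary genuinely produces a family of (in general distinct) bimodule structures parametrized by $\mathrm{End}(L)$.
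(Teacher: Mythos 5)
Your proposal is correct and is exactly the argument the paper intends: the corollary is the specialization of the preceding theorem to $L'=L$ with the morphism taken to be the endomorphism $\alpha$, and the paper accordingly states it without further proof. No gap here.
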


\subsection{Color Lie triple systems}
\begin{definition}
 A color Lie triple system is a $G$-graded vector space $A$ over a field $\mathbb{K}$ equipped with a bicharacter 
$\varepsilon : G\times G\rightarrow\mathbb{K}^*$ and an even trilinear bracket which satisfies the identity (\ref{lci}), instead of skew-symmetry,
satisfies the conditions
\begin{eqnarray}
 &&\qquad\qquad\qquad[x, y, z]=-\varepsilon(y, z)[x, z, y], \quad(\mbox{\it right}\; \varepsilon\mbox{\it -skew-symmetry})\label{rss}\\
&&\varepsilon(z, x)[x, y, z]+\varepsilon(x, y)[y, z, x]+\varepsilon(y, z)[z, x, y]=0,
\;\;(\mbox{\it ternary}\;\;\varepsilon\mbox{\it -Jacobi identity} )\label{ltsji}
\end{eqnarray}
for each $x, y, z\in\mathcal{H}(A)$.
\end{definition}
\begin{theorem}
 Let $(A, [-, -], \varepsilon)$ be a Lie color algebra. Define the even trilinear map $[-, -, -] : A\otimes A\otimes A\rightarrow A$ by
$$[x, y, z]:=[x, [y, z]]$$
for any $x, y, z\in\mathcal{H}(A)$.
Then $(A, [-, -, -], \varepsilon)$ is a color Lie triple system.
\end{theorem}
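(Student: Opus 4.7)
The plan is to reduce the three conditions defining a color Lie triple system to the structural identities of the underlying Lie color algebra $(A,[-,-],\varepsilon)$, namely $\varepsilon$-skew-symmetry of the bracket and the $\varepsilon$-Jacobi identity. Since any Lie color algebra is in particular a Leibniz color algebra (the $\varepsilon$-Jacobi identity can be rewritten as the right Leibniz identity (\ref{cpa}) using $\varepsilon$-skew-symmetry), Theorem \ref{ll3} immediately gives that $(A,[-,[-,-]],\varepsilon)$ satisfies the fundamental identity (\ref{lci}). So the work reduces to verifying the two extra axioms (\ref{rss}) and (\ref{ltsji}).

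For right $\varepsilon$-skew-symmetry, I would just unfold the definition and use the $\varepsilon$-skew-symmetry of the inner bracket:
\begin{eqnarray}
[x,y,z]=[x,[y,z]]=-\varepsilon(y,z)[x,[z,y]]=-\varepsilon(y,z)[x,z,y],\nonumber
\end{eqnarray}
for all homogeneous $x,y,z$. This step is one line and presents no difficulty.

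For the ternary $\varepsilon$-Jacobi identity, the observation is that after substitution
\begin{eqnarray}
\varepsilon(z,x)[x,y,z]+\varepsilon(x,y)[y,z,x]+\varepsilon(y,z)[z,x,y]
=\varepsilon(z,x)[x,[y,z]]+\varepsilon(x,y)[y,[z,x]]+\varepsilon(y,z)[z,[x,y]],\nonumber
\end{eqnarray}
which is precisely the graded Jacobi identity of the Lie color algebra $(A,[-,-],\varepsilon)$ and therefore vanishes.

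The only place where one must be mildly careful is in checking that the $\varepsilon$-Jacobi identity of the Lie color algebra is indeed the form needed here (i.e.\ the prefactors $\varepsilon(z,x)$, $\varepsilon(x,y)$, $\varepsilon(y,z)$ match, rather than the alternative cyclic form $\varepsilon(x,z)$ etc.). Since the bicharacter satisfies $\varepsilon(a,b)\varepsilon(b,a)=1$, either convention can be converted to the other, so this is not a genuine obstacle but simply a bookkeeping check. After that, all three axioms are in place and the theorem follows.
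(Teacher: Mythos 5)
Your proof is correct and follows essentially the same route as the paper's: the paper likewise observes that right $\varepsilon$-skew-symmetry is immediate, that the ternary $\varepsilon$-Jacobi identity reduces to the $\varepsilon$-Jacobi identity of the Lie color bracket, and that Theorem \ref{ll3} supplies the fundamental identity (\ref{lci}). Your version simply spells out the details (including the remark that a Lie color algebra is in particular a Leibniz color algebra) that the paper leaves implicit.
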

\begin{proof}
The right skew-symmetry (\ref{rss}) is immediate. The ternary $\varepsilon$-Jacobi identity (\ref{ltsji}) follows from $\varepsilon$-Jacobi
 identity. And Theorem \ref{ll3} completes the proof.
\end{proof}
\begin{corollary}
 Let $(A, \cdot, \varepsilon)$ be an associative color algebra. Then $A$ is a color Lie triple system with respect to the bracket
$$[x, y, z]:=x\cdot (y\cdot z)-\varepsilon(y, z)x\cdot (z\cdot y)
-\varepsilon(x, y+z)(y\cdot z)\cdot x+ \varepsilon (x, y+z)\varepsilon(y, z)(z\cdot y)\cdot x.$$
\end{corollary}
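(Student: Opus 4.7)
The plan is to deduce this corollary from the preceding theorem rather than verify the axioms of a color Lie triple system directly from scratch. Concretely, the previous theorem states that from a Lie color algebra $(A, [-,-], \varepsilon)$ one obtains a color Lie triple system via the ternary bracket $[x,y,z] := [x,[y,z]]$, so it suffices to exhibit the Lie color algebra structure on $A$ and then expand this formula.

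First I would recall the well-known fact that any associative color algebra $(A, \cdot, \varepsilon)$ becomes a Lie color algebra under the $\varepsilon$-commutator
\begin{equation*}
[x,y] := x\cdot y - \varepsilon(x,y)\, y\cdot x,
\end{equation*}
for $x,y \in \mathcal{H}(A)$. The $\varepsilon$-skew-symmetry is immediate, and the $\varepsilon$-Jacobi identity follows from associativity of $\cdot$ by a standard bicharacter bookkeeping argument. This step is essentially a citation of folklore; if desired, it can be justified as a special case of Example/Corollary-type results already used in the paper (cf.\ the analogous derivation of a Leibniz-Poisson bracket from an associative color algebra).

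Next I would apply the preceding theorem to this Lie color algebra to obtain directly that $(A, \{-,-,-\}, \varepsilon)$ with $\{x,y,z\} := [x,[y,z]]$ is a color Lie triple system. The only remaining task is to check that this $\{x,y,z\}$ coincides with the ternary operation in the statement. Expanding using bilinearity of the commutator and the bicharacter identities,
\begin{align*}
[x,[y,z]] &= [x,\, y\cdot z] - \varepsilon(y,z)\,[x,\, z\cdot y]\\
&= x\cdot(y\cdot z) - \varepsilon(x,y+z)(y\cdot z)\cdot x\\
&\quad - \varepsilon(y,z)\bigl(x\cdot(z\cdot y) - \varepsilon(x,y+z)(z\cdot y)\cdot x\bigr),
\end{align*}
which is exactly the displayed formula.

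I do not anticipate any real obstacle: the only nontrivial piece is the passage from associative color algebra to Lie color algebra, and even that is a routine bicharacter computation. The strategy reduces everything to the previous theorem plus a one-line expansion, so this proof is essentially a transitivity-of-constructions observation rather than a fresh verification of \eqref{lci}, \eqref{rss}, and \eqref{ltsji}.
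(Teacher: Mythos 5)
Your proposal is correct and is exactly the route the paper intends: the corollary is stated without proof precisely because it follows from the preceding theorem applied to the $\varepsilon$-commutator Lie color algebra $[x,y]=x\cdot y-\varepsilon(x,y)\,y\cdot x$, and your expansion of $[x,[y,z]]$ reproduces the displayed bracket term for term.
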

Next we introduce color Jordan triple systems and study their connection with color Lie triple systems.
\begin{definition}
 A color Jordan triple system is a triple $(J, [-, -, -], \varepsilon)$ in which
 $J$ is a $G$-graded vector space over a field $\mathbb{K}$, $\varepsilon : G\times G\rightarrow\mathbb{K}^*$ is a bicharacter and 
$[-, -, -] : A\otimes A\otimes A\rightarrow A$ is an even trilinear map satisfying
\begin{eqnarray}
 [x, y, z]&=&\varepsilon(x, y)\varepsilon(x, z)\varepsilon(y, z)[z, y, x]\quad\mbox{\it (outer-}\varepsilon\mbox{\it -symmetry}) \label{oss}
\end{eqnarray}
and {\it color Jordan triple identity}
\begin{eqnarray}
{[[x, y, z], t, u]}&=&[x, y, [z, t, u]]-\varepsilon(z, t+u)\varepsilon(t, u)[x, [y, u, t], z]+\varepsilon(y+z, t+u)[[x, t, u], y, z],
\label{jts}
\end{eqnarray}
for any $x, y, z\in\mathcal{H}(J)$.
\end{definition}
\begin{example}
 Let $(A, \cdot, \varepsilon)$ be an associative color algebra. Then $(A, [-, -, -], \varepsilon)$ is a color Jordan triple system with respect
to the triple product
$$[x, y, z]:=x\cdot y\cdot z+\varepsilon(x, y)\varepsilon(x, z)\varepsilon(y, z)z\cdot y\cdot x.$$
\end{example}
\begin{example}
Let $(A, \cdot, \varepsilon)$ be an associative color algebra and $\theta : A\rightarrow A$ be an even linear map on $A$ satisfying $\theta^2=Id_A$ and 
$\theta(x\cdot y)=\varepsilon(x, y)\theta(y)\cdot\theta(x)$ for any $x, y\in \mathcal{H}(A)$. Then $(A, [-, -, -], \varepsilon)$ is a color Jordan
triple system with the triple product
$$[x, y, z]:=x\cdot\theta(y)\cdot z+\varepsilon(x, y)\varepsilon(x, z)\varepsilon(y, z)z\cdot\theta(y)\cdot x.$$
\end{example}
We have the following result.
\begin{theorem}
 Let $(J, [-, -, -], \varepsilon)$ be a color Jordan triple system. Define the triple product 
$$\{x, y, z\}:=[x, y, z]-\varepsilon(y, z)[x, z, y]$$
for any $x, y, z\in\mathcal{H}(J)$. Then $L(J)=(J, \{-, -, -\}, \varepsilon)$ is a color Lie triple system.
\end{theorem}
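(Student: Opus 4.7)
The plan is to verify, in order, (a) evenness and right $\varepsilon$-skew-symmetry (\ref{rss}) of the new bracket $\{-,-,-\}$, (b) the ternary $\varepsilon$-Jacobi identity (\ref{ltsji}), and (c) the Filippov-type identity (\ref{lci}). Evenness is immediate since $[-,-,-]$ is even and the prefactor $\varepsilon(y,z)$ is a scalar. Right skew-symmetry is a one-line calculation:
$$-\varepsilon(y,z)\{x,z,y\} = -\varepsilon(y,z)[x,z,y] + \varepsilon(y,z)\varepsilon(z,y)[x,y,z] = [x,y,z] - \varepsilon(y,z)[x,z,y] = \{x,y,z\},$$
using $\varepsilon(y,z)\varepsilon(z,y)=1$.

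For (b), I expand each of the three summands on the left of (\ref{ltsji}) via the definition, producing the six Jordan-bracket terms
$$\varepsilon(z,x)[x,y,z] + \varepsilon(x,y)[y,z,x] + \varepsilon(y,z)[z,x,y] - \varepsilon(z,x)\varepsilon(y,z)[x,z,y] - \varepsilon(x,y)\varepsilon(z,x)[y,x,z] - \varepsilon(y,z)\varepsilon(x,y)[z,y,x].$$
Each of $[y,z,x]$, $[z,x,y]$ and $[y,x,z]$ is then rewritten via outer-$\varepsilon$-symmetry (\ref{oss}) in terms of $[x,y,z]$, $[x,z,y]$ or $[z,y,x]$; the bicharacter prefactors collapse by $\varepsilon(a,b)\varepsilon(b,a)=1$ and the six terms fall into three cancelling pairs. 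For instance, (\ref{oss}) gives $\varepsilon(z,x)[x,y,z] = \varepsilon(x,y)\varepsilon(y,z)[z,y,x]$, which cancels the sixth term $-\varepsilon(y,z)\varepsilon(x,y)[z,y,x]$.

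For (c), the main technical step, I first expand $\{\{x,y,z\},t,u\}$ into four Jordan-bracket terms by applying the definition to both the outer and the inner $\{-,-,-\}$. To each of these four terms I apply the color Jordan triple identity (\ref{jts}), producing twelve summands of the three basic shapes $[x,y,[z,\cdot,\cdot]]$, $[x,[y,\cdot,\cdot],z]$ and $[[x,\cdot,\cdot],y,z]$. I then perform the analogous expansion on the three summands on the right-hand side of (\ref{lci}) for $\{-,-,-\}$, applying the definition wherever $\{-,-,-\}$ appears; this also produces twelve Jordan-bracket terms in the same three shapes. The remaining task is to match them term by term.

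The main obstacle in (c) is the scalar bookkeeping, particularly the extra factor $\varepsilon(t,u)$ that appears in the middle term of (\ref{jts}) each time the last two arguments of a Jordan bracket are swapped, and coordinating this with the swaps $t\leftrightarrow u$ introduced when the outer $\{-,-,-\}$ on the left is unfolded. Once the twenty-four resulting summands are aligned by common outer shape and inner arguments, the identity reduces, pair by pair, to the bicharacter rules $\varepsilon(a,b+c) = \varepsilon(a,b)\varepsilon(a,c)$ and $\varepsilon(a,b)\varepsilon(b,a) = 1$, together with the right $\varepsilon$-skew-symmetry of $\{-,-,-\}$ already established in (a). No further structural property of $J$ beyond (\ref{oss}) and (\ref{jts}) is needed.
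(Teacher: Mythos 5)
Your proposal is correct and follows essentially the same route as the paper: right $\varepsilon$-skew-symmetry by a one-line computation, the ternary $\varepsilon$-Jacobi identity from outer $\varepsilon$-symmetry (\ref{oss}), and the identity (\ref{lci}) by expanding each $\{-,-,-\}$ into Jordan brackets and invoking the color Jordan triple identity (\ref{jts}). If anything, your write-up is more explicit than the paper's about how the twelve-versus-twelve term matching and the bicharacter bookkeeping are organized.
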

\begin{proof}
The {\it right $\varepsilon$-skew-symmetry} is immediate. The {\it ternary $\varepsilon$-Jacobi identity} follows from (\ref{oss}). 
It remains to check the  identity (\ref{lci}) for $\{-, -, -\}$. For any $x, y, z\in\mathcal{H}(J)$, we have
\begin{eqnarray}
 \{\{x, y, z\}, t, u\}
&=&[\{x, y, z\}, t, u]-\varepsilon(t, u)[\{x, y, z\}, u, t]\nonumber\\
&=&[([x, y, z]-\varepsilon(y, z)[x, z, y]), t, u]-\varepsilon(t, u)[([x, y, z]-\varepsilon(y, z)[x, z, y]), u, t]\nonumber\\
&=&[[x, y, z], t, u]-\varepsilon(y, z)[[x, z, y], t, u]-\varepsilon(t, u)[[x, y, z], u, t]
+\varepsilon(t, u)\varepsilon(y, z)[[x, z, y], u, t].\nonumber
\end{eqnarray}
Similarly,
\begin{eqnarray}
 \{x, y, \{z, t, u\}\}&=& [x, y, [z, t, u]]-\varepsilon(t, u)[x, y, [z, u, t]]-\varepsilon(y, z+t+u)[x, [z, t, u], y]
+\varepsilon(y, z+t+u)\varepsilon(t, u),\nonumber\\
\{x, \{y, t, u\}, z\}&=&[x, [y, t, u], z]-\varepsilon(t, u)[x, [y, u, t], z]-\varepsilon(y+t+u, z)[x, z, [y, t, u]]\nonumber\\
&&+\varepsilon(y+t+u, z)\varepsilon(t, u)[x, z, [y, u, t]],\nonumber\\
\{\{x, t, u\}, t, u\}&=&[[x, t, u], y, z]-\varepsilon(t, u)[[x, u, t], y, z]-\varepsilon(y, z)[[x, t, u], z, y]
+\varepsilon(y, z)\varepsilon(t, u)[[x, u, t], z, y]\nonumber.
\end{eqnarray}
Then, using axiom (\ref{jts}), relation (\ref{lci}) holds for the bracket $\{-, -, -\}$.
\end{proof}
\subsection{Comstrans color algebras}
\begin{definition}
 A Comstrans algebra is a $G$-graded vector space $T$ with a bicharacter $\varepsilon : G\times G\rightarrow \mathbb{K}^*$,
 two trilinear operations $A\times A\times A\rightarrow A$, the commutator $(x, y, z)\mapsto [x, y, z]$ and the translator 
$(x, y, z)\mapsto < x, y, z>$ satisfying the following identities for all $x, y, z\in\mathcal{H}(T)$,
\begin{eqnarray}
<x, y, x>=[x, y, x],\qquad\qquad\qquad\qquad\quad\\
 {[x, y, z]=-\varepsilon(y, z)[x, z, y]},\qquad\qquad\qquad\qquad\\
\varepsilon(z, x)<x, y, z>+\varepsilon(x, y)<y, z, x>+\varepsilon(y, z)<z, x, y>=0.
\end{eqnarray}
\end{definition}

\begin{theorem}
 Let $(A, \cdot, \varepsilon)$ be an associative color. Then, $A$ carries a structure of  Comstrans color algebra with the multiplications
\begin{eqnarray}
 [x, y, z]&=&x\cdot y\cdot z-\varepsilon(y, z)x\cdot y\cdot z,\\
<x, y, z>&=&x\cdot y\cdot z-\varepsilon(x+y, z)z\cdot x\cdot y,
\end{eqnarray}
$x, y, z\in\mathcal{H}(A)$.
\end{theorem}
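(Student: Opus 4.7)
The plan is to verify each of the three defining axioms of a Comstrans color algebra by direct computation in the associative algebra $A$, writing $xyz$ for the unambiguous triple product $x\cdot y\cdot z$ and exploiting only associativity together with the three bicharacter axioms.

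First I would dispose of the right $\varepsilon$-skew-symmetry $[x,y,z]=-\varepsilon(y,z)[x,z,y]$ (reading the displayed formula with the natural typo correction $[x,y,z]=xyz-\varepsilon(y,z)\,xzy$). Expanding the right-hand side via the definition and invoking $\varepsilon(y,z)\varepsilon(z,y)=1$ transforms $-\varepsilon(y,z)(xzy-\varepsilon(z,y)\,xyz)$ back into $xyz-\varepsilon(y,z)\,xzy=[x,y,z]$. For the coincidence $\langle x,y,x\rangle=[x,y,x]$, I would substitute $z=x$ in both definitions; the factorization $\varepsilon(x+y,x)=\varepsilon(x,x)\varepsilon(y,x)$ reduces the two sides to the same associative expression.

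The main computation is the ternary $\varepsilon$-Jacobi identity
\[
\varepsilon(z,x)\langle x,y,z\rangle+\varepsilon(x,y)\langle y,z,x\rangle+\varepsilon(y,z)\langle z,x,y\rangle=0.
\]
Expanding the three translators yields six associative monomials: three leading terms of cyclic type $xyz$, $yzx$, $zxy$ and three trailing terms $zxy$, $xyz$, $yzx$. Combining the multiplicativity $\varepsilon(a+b,c)=\varepsilon(a,c)\varepsilon(b,c)$ with $\varepsilon(a,b)\varepsilon(b,a)=1$, the coefficients of the trailing terms simplify as
\begin{align*}
\varepsilon(z,x)\varepsilon(x+y,z)&=\varepsilon(y,z),\\
\varepsilon(x,y)\varepsilon(y+z,x)&=\varepsilon(z,x),\\
\varepsilon(y,z)\varepsilon(z+x,y)&=\varepsilon(x,y).
\end{align*}
With these three reductions the six monomials split into three cancelling pairs and the whole sum vanishes.

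The principal obstacle is bookkeeping rather than mathematical depth: one must correctly align the three cyclic rotations of $(x,y,z)$ appearing in the Jacobi sum with the two terms of each translator, and invoke the bicharacter axioms in the right order. Once the three coefficient identities above are written out, the cancellations are automatic and no further structural input is required.
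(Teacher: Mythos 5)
The paper states this theorem without any proof, so there is nothing of its own to compare against; judged on its merits, your verification of the right $\varepsilon$-skew-symmetry and of the ternary $\varepsilon$-Jacobi identity is correct (the three coefficient reductions $\varepsilon(z,x)\varepsilon(x+y,z)=\varepsilon(y,z)$, $\varepsilon(x,y)\varepsilon(y+z,x)=\varepsilon(z,x)$, $\varepsilon(y,z)\varepsilon(z+x,y)=\varepsilon(x,y)$ are exactly what is needed, and reading the misprinted commutator as $[x,y,z]=x\cdot y\cdot z-\varepsilon(y,z)\,x\cdot z\cdot y$ is the only reading compatible with the skew-symmetry axiom). The gap is in the first axiom. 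Substituting $z=x$ gives $\langle x,y,x\rangle=x\cdot y\cdot x-\varepsilon(x+y,x)\,x\cdot x\cdot y$ and $[x,y,x]=x\cdot y\cdot x-\varepsilon(y,x)\,x\cdot x\cdot y$, so the two sides differ by $\varepsilon(y,x)\bigl(1-\varepsilon(x,x)\bigr)\,x\cdot x\cdot y$. The factorization $\varepsilon(x+y,x)=\varepsilon(x,x)\varepsilon(y,x)$ does \emph{not} reduce the two sides to the same expression: it leaves the factor $\varepsilon(x,x)$, which the bicharacter axioms only force to satisfy $\varepsilon(x,x)^2=1$, and which equals $-1$ on odd homogeneous elements.

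Consequently the identity $\langle x,y,x\rangle=[x,y,x]$ genuinely fails, and with it the theorem as stated. For a concrete counterexample take $G=\mathbb{Z}_2$, $\varepsilon(i,j)=(-1)^{ij}$, and $A=\mathbb{K}[\mathbb{Z}_2]$ with its natural grading; for the odd generator $g$ and the identity $e$ one finds $[g,e,g]=g\cdot e\cdot g-\varepsilon(e,g)\,g\cdot g\cdot e=e-e=0$, while $\langle g,e,g\rangle=e-\varepsilon(g+e,g)\,e=e+e=2e\neq0$ since the characteristic is not $2$. Your argument (and the theorem) is valid exactly when $\varepsilon(x,x)=1$ for every homogeneous $x$ that occurs, e.g.\ for the trivial grading, or when $x\cdot x\cdot y=0$ for all odd $x$; in the general color setting the statement needs either such a hypothesis or a modified definition of the commutator or translator, and the correct move is to flag this obstruction rather than assert the cancellation.
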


\begin{proposition}
 Any Lie color algebra $(L, [-, -], \varepsilon)$ has an underlying Comstrans color algebra defined by
\begin{eqnarray}
 [x, y, z]=<x, y, z>=[x, [y, z]]
\end{eqnarray}
for any $x, y, z\in\mathcal{H}(L)$.
\end{proposition}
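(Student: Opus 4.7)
The plan is to verify the three defining identities of a Comstrans color algebra for the trilinear maps $[x,y,z]=\langle x,y,z\rangle:=[x,[y,z]]$, using only the $\varepsilon$-skew-symmetry $[u,v]=-\varepsilon(u,v)[v,u]$ and the $\varepsilon$-Jacobi identity of the Lie color algebra $(L,[-,-],\varepsilon)$. Since both trilinear operations coincide here, the verification reduces to three short checks.

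First, I would observe that the identity $\langle x,y,x\rangle=[x,y,x]$ is immediate, as both sides equal $[x,[y,x]]$ by definition. Next, for the right $\varepsilon$-skew-symmetry, I would compute
\begin{eqnarray}
[x,y,z]=[x,[y,z]]=[x,-\varepsilon(y,z)[z,y]]=-\varepsilon(y,z)[x,[z,y]]=-\varepsilon(y,z)[x,z,y],\nonumber
\end{eqnarray}
where I pull the scalar $-\varepsilon(y,z)$ out of the bilinear bracket. This uses only bilinearity and $\varepsilon$-skew-symmetry of $[-,-]$.

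For the translator Jacobi-type identity, I would substitute the definition to get
\begin{eqnarray}
\varepsilon(z,x)\langle x,y,z\rangle+\varepsilon(x,y)\langle y,z,x\rangle+\varepsilon(y,z)\langle z,x,y\rangle
=\varepsilon(z,x)[x,[y,z]]+\varepsilon(x,y)[y,[z,x]]+\varepsilon(y,z)[z,[x,y]],\nonumber
\end{eqnarray}
which vanishes by the $\varepsilon$-Jacobi identity of the Lie color algebra.

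None of the three steps presents a genuine obstacle; the whole argument is essentially a rewriting. The only mild subtlety is keeping the bicharacter factors correctly placed when moving scalars through the bilinear bracket, but since $\varepsilon$ takes values in $\mathbb{K}^{*}$ and the bracket is $\mathbb{K}$-bilinear (not $\varepsilon$-twisted at the level of scalar multiplication), this is purely bookkeeping rather than a substantive difficulty.
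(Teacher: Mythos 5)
Your verification is correct: the first Comstrans axiom holds by definition, the right $\varepsilon$-skew-symmetry follows from the $\varepsilon$-antisymmetry of the binary bracket, and the cyclic translator identity is precisely the $\varepsilon$-Jacobi identity. The paper states this proposition without any proof, and your argument supplies exactly the routine check the author evidently had in mind.
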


\begin{proposition}
 Let $T$ be a $G$-graded vector space, $\varepsilon$ a bicharacter on $G$ and $f : T\times T\rightarrow\mathbb{K}$ a $\varepsilon$-symmetric 
bilinear form on $T$. Then, $T$ becomes  a Comstrans color algebra with the operations
\begin{eqnarray}
 [x, y, z]=<x, y, z>=f(x, z)y-\varepsilon(x, z)f(y, z)x
\end{eqnarray}
\end{proposition}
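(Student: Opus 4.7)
The plan is to verify the three defining axioms of a Comstrans color algebra directly from the formula. Because the commutator and translator are defined by the same expression, $[x,y,z]=\langle x,y,z\rangle$, the first axiom $\langle x,y,x\rangle=[x,y,x]$ is automatic and needs no work. Thus the real content lies in the remaining two identities: the $\varepsilon$-skew-symmetry of the commutator in the last two slots, and the ternary $\varepsilon$-Jacobi-type identity for the translator.

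For the skew-symmetry, I would substitute the explicit formula into $-\varepsilon(y,z)[x,z,y]$, expand, and then rewrite the resulting coefficients using the $\varepsilon$-symmetry of the bilinear form $f$ (that is, $f(a,b)=\varepsilon(a,b)f(b,a)$ for homogeneous $a,b$) together with the bicharacter identities $\varepsilon(a,b)\varepsilon(b,a)=1$ and $\varepsilon(a,b+c)=\varepsilon(a,b)\varepsilon(a,c)$. After these substitutions the expression should collapse term-by-term onto $f(x,z)y-\varepsilon(x,z)f(y,z)x$, confirming the identity.

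For the ternary $\varepsilon$-Jacobi identity, I would expand the cyclic sum
\begin{eqnarray*}
\varepsilon(z,x)\langle x,y,z\rangle+\varepsilon(x,y)\langle y,z,x\rangle+\varepsilon(y,z)\langle z,x,y\rangle
\end{eqnarray*}
into six monomials and group them by which of the three basis-vectors $x$, $y$, $z$ they contain. Each of the three groups will contain exactly two terms; in each group I expect the two terms to cancel after applying $\varepsilon$-symmetry of $f$ to swap arguments and absorbing the resulting bicharacter factor via $\varepsilon(a,b)\varepsilon(b,a)=1$. A single sample cancellation, say in the $x$-group one obtains $-\varepsilon(z,x)\varepsilon(x,z)f(y,z)x+\varepsilon(y,z)f(z,y)x$, which reduces to $-f(y,z)x+f(y,z)x=0$, should illustrate the pattern and the other two groups follow by cyclic symmetry of the computation.

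The main obstacle is not conceptual but notational: the formulas are $\varepsilon$-heavy and a single misplaced bicharacter factor flips a sign and breaks the cancellation. So the plan is essentially to organize the six-term expansion neatly by target vector and to apply the two identities $f(a,b)=\varepsilon(a,b)f(b,a)$ and $\varepsilon(a,b)\varepsilon(b,a)=1$ in a uniform way; no deeper structural insight beyond the $\varepsilon$-symmetry of $f$ is required.
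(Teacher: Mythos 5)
The paper states this proposition without any proof, so there is nothing to compare your approach against; judged on its own terms, your plan has a genuine gap at the second axiom. You assert that $-\varepsilon(y,z)[x,z,y]$ ``should collapse term-by-term onto $f(x,z)y-\varepsilon(x,z)f(y,z)x$,'' but carrying out the expansion you describe gives
\begin{equation*}
-\varepsilon(y,z)[x,z,y]\;=\;-\varepsilon(y,z)f(x,y)\,z+\varepsilon(y,z)\varepsilon(x,y)f(z,y)\,x\;=\;-\varepsilon(y,z)f(x,y)\,z+\varepsilon(x,y)f(y,z)\,x,
\end{equation*}
a combination of $x$ and $z$, whereas $[x,y,z]=f(x,z)y-\varepsilon(x,z)f(y,z)x$ is a combination of $x$ and $y$. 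No application of the $\varepsilon$-symmetry of $f$ or of the bicharacter identities converts a multiple of $z$ into a multiple of $y$, so the identity $[x,y,z]=-\varepsilon(y,z)[x,z,y]$ fails for this bracket. The failure is already visible in the trivially graded case: with $f$ the standard symmetric form on $\mathbb{K}^{3}$ one has $[e_1,e_2,e_1]=e_2$ while $[e_1,e_1,e_2]=0$. The bracket $f(x,z)y-f(y,z)x$ is the classical comtrans example and is skew-symmetric in the \emph{first} two slots, not the last two; so either the proposition is false against the paper's stated axiom, or that axiom is misstated (the ungraded comtrans definition requires $[x,y,z]=-[y,x,z]$), and even then the displayed $\varepsilon$-factors do not yield the graded left skew-symmetry without further hypotheses on $f$. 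You would have detected all of this by performing the expansion rather than asserting its outcome.

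Your handling of the other two axioms is correct: the first is indeed automatic because the two operations coincide, and your six-term expansion of the cyclic sum, grouped by target vector, cancels exactly as in your sample computation for the $x$-group.
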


{\bf Acknowlegments :} 
The author would like to thank Professor Alain Togb\'e of Purdue University for his material support.


\label{lastpage-01}

\begin{thebibliography}{99}
\bibitem{A} Abdaoui K., Mabrouk S. and Makhlouf A., {\it Rota-Baxter operators on pre-Lie superalgebras and beyond}, arXiv 1512.08043v1
[Math. RA] 25 Dec 2015.


\bibitem{BL} Bagger J and Lambert N., {\it Gauge symmetry and supersymmetry of multiple M2-Branes}, Phys. Rev. D. 2008;77:065008.

\bibitem{B} Bakayoko I., {\it Modules over color Hom-Poisson  algebras}, J. of Generalized Lie  Theory Appl., \textbf{8:1} (2014); 
doi:10.4172/1736-4337.1000212.

\bibitem{BD} Bakayoko I. and Diallo O. W., {\it Some generalized Hom-algebra structures}, J. of Generalized Lie Theory Appl., 
\textbf{9:226} (2015); doi: 10.4172/17364337. 1000226.


\bibitem{JMC} Casas J. M., {\it Trialgebras and Leibniz 3-algebras}, Bol. Soc. Mat. Mexicana (3) $\bf 14$, 165-178, 2006.

\bibitem{JM} Casas J. M., Loday J.-L.  and Pirashvili T., {\it Leibniz n-algebras}, Forum Math. 14, 189-207, 2002.



\bibitem{DI} De Azcarraga J. A. and Izquierdo J. M., {\it  n-ary algebras: a review with applications}, J. Phys. A: Math.
Theor. 2010;43:293001.

\bibitem{F} Filippov V.T., {\it n-Lie algebras}, Sib. Mat. Zh., 26 (1985) 126-140.

\bibitem{JL2} J-L. Loday and Ronco M., {\it Trialgebras and families of polytopes}, in Homotopy Theory: relations with Algebraic Geom-
etry , Group Cohomology, and Algebraic K-Theory, in Contemp. Math. vol 346, Amer. Math. Soc., Providence, RI,
2004, 369-673.

\bibitem{IY} Kaygorodova I. and Popov Y., {\it Generalized derivations of (color) n-ary algebras}, Linear and multilinear algebra, $\bf 64$ (6) 2016.

\bibitem{LW} Ling W. X., {\it  On the structure of n-Lie algebras}, [PhD Thesis]. Siegen: University-GHS-Siegen; 1993.

\bibitem{MA} Makhlouf A. and Amri A., {\it Non-Commutative Ternary Nambu-Poisson Algebras and Ternary Hom-
Nambu-Poisson Algebras},  J Generalized Lie Theory Appl 9: 221. doi:10.4172/1736-4337.1000221

\bibitem{MD} Makhlouf A. and Yau D., {\it Rota-Baxter Hom-Lie admissible algebras}, Communication in Algebra,  $\bf 23$, no 3, 1231-1257, 2014.

\bibitem{RM} Rotkiewicz M., {\it  Cohomology ring of n-Lie algebras}, Extracta Math. 2005;20:219-232.

\bibitem{TL} Takhtajan L., {\it On foundation of the generalized Nambu mechanics}, Commun. Math. Phys. 1994;160:295-316.

\bibitem{TL1} Takhtajan L. {\it A higher order analog of Chevally-Eilenberg complex and deformation theory of
n-algebras}, St. Petersburg Math. J. 1995;6:429-438.


\bibitem{CQZ} Wang C., Zhang Q. and Wei Z. , {\it Hom-Leibniz superalgebras and Hom-Leibniz
poisson superalgebras}, Hacettepe Journal of Mathematics and Statistics, $\bf 44$ (5) (2015), 1163-1179.

\bibitem{TZ} Zhang T., {\it Cohomology and deformations of 3-Lie colour algebras},
Linear and Multilinear Algebra, 63:4 (2015), 651-671, DOI: 10.1080/03081087.2014.891589.

\bibitem{ChapBaSilnhomliecolor:Abramov} Abramov, V.: Super 3-Lie algebras induced by super Lie algebras. Adv. Appl. Clifford Algebr. 27, no. 1, 9-16 (2017)

\bibitem{ChapBaSilnhomliecolor:akms:ternary} Arnlind, J., Kitouni, A., Makhlouf, A., Silvestrov, S.:
Structure and Cohomology of $3$-Lie algebras induced by Lie algebras, in Algebra, Geometry and Mathematical Physics, Springer proceedings in Mathematics and $\&$ Statistics, vol \textbf{85} (2014)

\bibitem{ChapBaSilnhomliecolor:ams:ternary} Arnlind, J., Makhlouf, A., Silvestrov, S.:
Ternary Hom-Nambu-Lie algebras induced by Hom-Lie algebras, J. Math. Phys. \textbf{51}, 043515, 11 pp. (2010)

\bibitem{IAP} Ivan Kaygorodov,, Alexander Pozhidaev, Paulo Saraiva, {\it On ternary generalization of Jordan algebras},

  \bibitem{SJ} Senrong Xu and Jiefeng Liu, {\it Cohomology of $3$-Lie algebras with derivations}, arXiv: 2110.04215v1 [math.RA], 8 oct 2021.
%

  \bibitem{JAS} J. Arnlind, A. Kitouni, A. Makhlouf and S. Silvestrov,
 {\it Structure and cohomology of $3$-Lie algebras induced by Lie algebras}, arXiv: 1312.7599v1 [math.RA] 29 Dec 2013.


 \bibitem{JB} Jeffrey Bergen and Piotr Grzeszczuk, {\it Simple Jordan color algebra arising from associative graded algebras}, Journal of Algebra $\bf 246$ 915-950.
%
  \bibitem{FJL} F. M. Ciaglia, J. Jost, L. Schwachhofer , {\it What Lie algebra can tell us about Jordan algebras}, 
arxiv: 2112.09781v1 [math. DG] 17 Dec 2021.

 \bibitem{HP} Hamdiatou Yara and Patricia L. Zoungrana , {\it On Hom-Lie triple systems and involutions of Hom-Lie algebras},
Korean J. Math. $\bf 30$ (2022), No.2, pp. 363-373.

\bibitem{ChapBaSilnhomliecolor:Bai:rlz3} Bai, R., Bai, C., Wang, J.: Realizations of $3$-Lie algebras, Journal of Mathematical Physics \textbf{51},
 063505 (2010)

\bibitem{LG} L. Guo, An introduction to Rota-Baxter algebra, Internatational Press (US) and Higher Education Press (China), 2012 

\bibitem{GCB} G.-C. Rota,  Baxter algebras and combinatorial identities, I, II, Bull. Amer. Math. Soc. A $\bf 75$ (1969), 325-329, 330-334.








\bibitem{ChapBaSilnhomliecolor:ams:n} Arnlind, J., Makhlouf, A., Silvestrov, S.:
Construction of $n$-Lie algebras and $n$-ary Hom-Nambu-Lie algebras, J. Math. Phys. \textbf{52}, 123502, 13 pp. (2011)

\bibitem{ChapBaSilnhomliecolor:AtMaSi:GenNambuAlg} Ataguema, H.,  Makhlouf, A., Silvestrov, S.: Generalization of n-ary Nambu
algebras and beyond. J. Math. Phys. 50, 083501 (2009) 



\bibitem{ChapBaSilnhomliecolor:Bai:n} Bai, R., Wu, Y., Li, J., Zhou, H.: Constructing $(n+1)$-Lie algebras from $n$-Lie algebras, J. Phys. A \textbf{45}, no. 47 (2012)

\bibitem{ChapBaSilnhomliecolor:Bai:nLie:clas} Bai, R., Song, G., Zhang, Y.: On classification of $n$-Lie algebras, Front. Math. China \textbf{6}, 581-606 (2011)

\bibitem{ChapBaSilnhomliecolor:Bai:nLie:claschar2} Bai R., Wang, X., Xiao, W., An, H.:
The structure of low dimensional $n$-Lie algebras over the field of characteristic $2$, Linear Algebra Appl. \textbf{428} (8-9), 1912-1920 (2008)

\bibitem{ChapBaSilnhomliecolor:RM1} Bai R., Meng D., {\it The central extension of n-Lie algebras}, Chinese Ann. Math. 27 (4) (2006) 491-502.

\bibitem{ChapBaSilnhomliecolor:RM2} Bai R., Meng D., {\it The centroid of n-Lie algebras}, Algebras Groups Geom. 25 (2) (2004) 29-38.

\bibitem{ChapBaSilnhomliecolor:RB} Bai R. , Zhang Z. , Li H., Shi H. {\it The inner derivation algebras of (n+1)-dimensional n-Lie algebras}, Comm. Algebra, 28 (6) (2000) 2927-2934.

\bibitem{ChapBaSilnhomliecolor:Filippov:nLie} Filippov, V. T.: $n$-Lie algebras, Siberian Math. J. \textbf{26}, 879-891 (1985).
Translated from Russian: Sib. Mat. Zh., 26 (1985) 126-140.

\bibitem{ChapBaSilnhomliecolor:RM} Rotkiewicz, M.: Cohomology ring of n-Lie algebras, Extracta Math. \textbf{20}, 219-232 (2005).

\bibitem{ChapBaSilnhomliecolor:YauHomNambuLie} Yau, D.: On $n$-ary Hom-Nambu and Hom-Nambu-Lie algebras,  J. Geom. Phys. \textbf{62}, 506--522 (2012)

\bibitem{CJ} J. Carinena, J. Grabowski,  G. Marmo,  {\it Quantum bi-Hamiltonian systems}, Internat J. Modern Phys A, 2000, 15: 4797-4810.

\bibitem{FN} A. Frolicher, A. Nijenhuis,  {\it Theory of vector valued differential forms}, Part I. Indag Math, 1956, 18: 338-360
\bibitem{PL}
P. Leroux, Contruction of Nijenhuis operators and Dendriform trialgebras, February 2004.

\bibitem{MA} M. Aguiar, On associative analog of Lie bialgebras, J. Algebra. $\bf 224$(2001), 492-532.

\bibitem{CA1} C. Bai, A unified algebraic approch to the classical Yang-Baxter equations. J. Phys. A: Math. Theor. (2007) $\bf 40$, 11073-11082

\bibitem{CA2} C. Bai, O. Bellier, L. Guo and X Ni, Splitting of operations, Manin products and Rota-Baxter operators, Int. Math. Res. Not. IMRN, (2012); doi
10.1093/imrnrnr266, arxiv:1106.6080.

\bibitem{GB} G. Baxter, An analytic problem whose solution follows from a simple algebraic identity, Pacific J. Math.$\bf 10$ (1960), 731-742.

\bibitem{PC} P. Cartier, On the strucutre of free Baxter algebras, Adv. Math. $\bf 9$ (1972), 253-265. 

\bibitem{CK} A. Connes and D. Kreimer,  Renormalisation in quantum field theory and the Riemann-Hilbert problem. I. 
The Hopf algebra strucutre of graphs and the main theorem, Comm. Math. Phys., $\bf 210$ (2000), 249-273.



\bibitem{F} F. Spitzer, A combinatorial lemma and its application to probability theory, Trans.Amer. Math. Soc. $bf 82$ (1956), 323-339.
\bibitem{AM} M. A. Semenov -Tian-Shansky,  What is classical r-matrix? Funct. Ana. Appl. $\bf 17$ (1983) 259-272.


\bibitem{AV1}  Abramov, V. {\it Super 3-Lie Algebras Induced by Super Lie Algebras}, Adv. Appl. Clifford Algebras, 2017, $\bf 27$, 9-16.
 \bibitem{AV2} Abramov, V. {\it Matrix 3-Lie superalgebras and BRST supersymmetry}, Int. J. Geom. Methods Mod. Phys., 2017, $\bf14$(11), 1750160.
 \bibitem{AV3} Abramov, V. {\it 3-Lie Superalgebras Induced by Lie Superalgebras}, Axioms, 2019, 8(1), 21.
\bibitem{BW} Bai, R. and Wu, Y. {\it Constructions of 3-Lie algebras. Linear Multilinear Algebra}, 2015, 63(11), 2171-2186.
 \bibitem{GC} Guan, B., Chen, L., and Sun, B. {\it 3-ary Hom-Lie Superalgebras Induced by Hom-Lie Superalgebras}, Adv. Appl. Clifford Algebras,
2017, 27, 3063-3082.
 \bibitem{HB} Harris, B., {\it Cohomology of Lie triple systems and Lie algebras withinvolution}, Trans. Amer. Math. Soc. (1961) (98), 148-162.
 \bibitem{HT1} Hodge, T. L., {\it Lie triple systems, restricted Lie triple systems and algebraic groups}, J. Algebra, (2001) (244), 533-580.
 \bibitem{TH1}  Hodge, T. L., PARSHALL, B.J., .{\it  On the representarion theory of Lie triple systems}, Trans. Amer. Math. Soc., (2002) (354),
 1, 4359-4391.
 \bibitem{JNN} JACOBSON, N., . {\it General representarion theory of Jordan algebras}, Trans. Amer. Math. Soc., (1951) (70), 509-530.
\bibitem{LW} LISTER, W.G., . {\it A structure Theory for Lie triple systems}, Trans. Amer. Math. Soc., (1952) (72), 217-242.
 \bibitem{YK} YAMAGUTI, K., (1968). {\it On weak representarions of Lie triple systems}, Kumamoto J. Sci., (8), Ser. A, 107-114.
\bibitem{SN} S. Okubo and N. Kamiya, {\it Jordan-Lie super algebras and Jordan-Lie triple systems}, J. Algebra 198 (1997), 388-411.









%


\end{thebibliography}
\end{document}